\documentclass{amsart}
\usepackage{amssymb,amsmath}
\usepackage{stmaryrd}
\usepackage{yhmath}
\usepackage{latexsym}
\usepackage{amsthm}
\usepackage{amscd}
\usepackage{mathrsfs}
\usepackage[all]{xy}
\usepackage{mathtools}
\usepackage{arydshln}
\usepackage{bbm}
\usepackage{bm}
\usepackage{url}
\usepackage{color}
\usepackage{threeparttable}
\usepackage{booktabs}
\usepackage{braket}

\newcommand{\migi}{\mathrm{right}}

\newcommand{\nr}{\mathrm{nr}}

\newcommand{\red}{\mathrm{red}}
\newcommand{\der}{\mathrm{der}}

\newcommand{\w}{\mathrm{w}}

\newcommand{\Frob}{\mathrm{Frob}}
\newcommand{\lrFrob}{\langle\mathrm{Frob}\rangle}

\newcommand{\mcA}{\mathcal{A}}
\newcommand{\mcB}{\mathcal{B}}
\newcommand{\mcC}{\mathcal{C}}
\newcommand{\mcF}{\mathcal{F}}
\newcommand{\mcH}{\mathcal{H}}
\newcommand{\mcI}{\mathcal{I}}
\newcommand{\mcM}{\mathcal{M}}
\newcommand{\mcO}{\mathcal{O}}
\newcommand{\mcP}{\mathcal{P}}

\newcommand{\mcR}{\mathcal{R}}
\newcommand{\mcS}{\mathcal{S}}
\newcommand{\mcT}{\mathcal{T}}

\newcommand{\Z}{\mathbb{Z}}

\newcommand{\R}{\mathbb{R}}
\newcommand{\Q}{\mathbb{Q}}
\newcommand{\C}{\mathbb{C}}
\newcommand{\Gm}{\mathbb{G}_{\mathrm{m}}}
\newcommand{\Ga}{\mathbb{G}_{\mathrm{a}}}
\newcommand{\J}{\mathbf{J}}

\newcommand{\bfA}{\mathbf{A}}
\newcommand{\bfB}{\mathbf{B}}
\newcommand{\bfG}{\mathbf{G}}

\newcommand{\bfL}{\mathbf{L}}
\newcommand{\bfM}{\mathbf{M}}
\newcommand{\bfN}{\mathbf{N}}
\newcommand{\bfP}{\mathbf{P}}
\newcommand{\bfS}{\mathbf{S}}
\newcommand{\bfT}{\mathbf{T}}
\newcommand{\bfU}{\mathbf{U}}
\newcommand{\bfo}{\mathbf{o}}
\newcommand{\bfq}{\mathbf{q}}

\newcommand{\G}{\mathbf{G}}

\newcommand{\mfg}{\mathfrak{g}}

\newcommand{\mfp}{\mathfrak{p}}

\newcommand{\ul}{\underline}
\newcommand{\ol}{\overline}

\DeclareMathOperator{\Res}{Res}
\DeclareMathOperator{\adj}{adjoint}
\DeclareMathOperator{\Std}{Std}

\DeclareMathOperator{\val}{val}

\DeclareMathOperator{\half}{half}
\DeclareMathOperator{\spin}{spin}

\DeclareMathOperator{\vol}{vol}

\DeclareMathOperator{\GL}{GL}

\DeclareMathOperator{\SO}{SO}
\DeclareMathOperator{\SL}{SL}

\DeclareMathOperator{\GSp}{GSp}
\DeclareMathOperator{\GSO}{GSO}

\DeclareMathOperator{\Spin}{Spin}
\DeclareMathOperator{\GSpin}{GSpin}

\DeclareMathOperator{\Ind}{Ind}
\DeclareMathOperator{\nInd}{n-Ind}
\DeclareMathOperator{\Hom}{Hom}
\DeclareMathOperator{\End}{End}
\DeclareMathOperator{\Ker}{Ker}
\DeclareMathOperator{\Gal}{Gal}

\DeclareMathOperator{\diag}{diag}

\DeclareMathOperator{\As}{As}

\pagestyle{plain}
 \setlength{\itemsep}{0pt}
 \setcounter{totalnumber}{3}
 \setcounter{topnumber}{1}
 \setcounter{bottomnumber}{3}
 \setcounter{secnumdepth}{3}

\theoremstyle{plain}
\newtheorem{thm}{Theorem}[section]
\newtheorem*{thm*}{Theorem}
\newtheorem{prop}[thm]{Proposition}
\newtheorem{lem}[thm]{Lemma}
\newtheorem{cor}[thm]{Corollary}

\theoremstyle{definition}
\newtheorem{defn}[thm]{Definition}

\theoremstyle{remark}
\newtheorem{rem}[thm]{Remark}
\newtheorem*{claim*}{Claim}

\SelectTips{cm}{11}

\title{Iwahori--Hecke algebra and unramified local $L$-functions}

\author{Masao OI}
\address{Department of Mathematics (Hakubi center), Kyoto University, Kitashirakawa, Oiwake-cho, Sakyo-ku, Kyoto 606-8502, Japan.}
\email{masaooi@math.kyoto-u.ac.jp}

\author{Ryotaro SAKAMOTO}
\address{RIKEN Center for Advanced Intelligence Project (AIP), 1-4-1 Nihonbashi, Chuo-ku, Tokyo 103-0027, Japan.} 
\email{ryotaro.sakamoto@riken.jp}

\author{Hiroyoshi TAMORI}
\address{Department of Mathematics, Faculty of Science, Hokkaido University, Kita 10, Nishi 8, Kita-Ku, Sapporo, Hokkaido, 060-0810, Japan.}
\email{tamori@math.sci.hokudai.ac.jp}

\begin{document}

\begin{abstract}
In this paper, we compute the Hecke action of a certain test function on the space of an unramified principal series of a connected reductive group over a non-archimedean local field by using the theory of Iwahori--Hecke algebra.
As an application, we obtain a new expression of the local $L$-functions of unramified representations.
\end{abstract}

\maketitle

\tableofcontents

\section{Introduction}\label{sec:Intro}
Let $\G$ be an unramified connected reductive group over a non-archimedean local field $F$ (i.e., $\G$ is quasi-split and splits over an unramified extension of $F$).
The unramified representations of $\G(F)$ are one of the most fundamental classes in representation theory of the group $\G(F)$.
Their importance can be explained in relation to the global theory, that is, almost all local components of automorphic representations are unramified. 
Hence unramified representations play a basic role in the theory of automorphic representations.
They have been investigated from the early days, and a lot of results have been obtained so far.

One fundamental result on unramified representations is the construction of the local $L$-functions.
Let ${}^{L}\G$ be the $L$-group of $\G$, which is given by the semi-direct product $\hat{\G}\rtimes W_{F}$ of the Langlands dual group $\hat{\G}$ and the Weil group $W_{F}$ of $F$.
According to the conjectural local Langlands correspondence, it is expected that the local $L$-function $L(s,\pi,r)$ is defined for any irreducible smooth representation $\pi$ of $\G(F)$ and a finite-dimensional continuous complex representation $r$ of the $L$-group ${}^{L}\G$.
When the representation $\pi$ is unramified, we can define the Satake parameter of $\pi$, which is a semisimple conjugacy class of ${}^{L}\G$.
Then we can attach the local $L$-function $L(s,\pi,r)$ to any pair of an unramified representation $\pi$ of $\G(F)$ and a finite-dimensional continuous representation $r$ of ${}^{L}\G$.

The aim of this paper is to give a new formula describing the local $L$-functions for unramified representations.
Before we explain the main result of this paper, let us introduce some motivating examples.

The first example is the case of the standard $L$-function of $\GL_{2}$. 
Let $\pi$ be an irreducible unramified representation of $\GL_{2}(\Q_{p})$.
We can take an unramified character $\chi$ of the diagonal maximal torus of $\GL_{2}$ such that $\pi$ is realized as a subquotient of the principal series (normalized parabolic induction) $(I_{\chi},V_{\chi})$ of $\chi$.
Consider the standard representation $\mathrm{Std}$ of the Langlands dual group $\GL_{2}(\C)$ of $\GL_{2}$.
Then, by an easy computation, we can check the following equality:
\[
L(s,\pi,\mathrm{Std})
=
\det\bigl(1-p^{-(s+1/2)} I_{\chi}(U_{J}) \,\big\vert\, V_{\chi}^{J}\bigr)^{-1}.
\]
Here $J$ is the open compact subgroup of $\GL_{2}(\Q_{p})$ defined by
\[
J:=
\biggl\{
\begin{pmatrix}a&b\\c&d\end{pmatrix}\in\GL_{2}(\Z_{p})
\,\bigg\vert\,
 c\in p \Z_{p} 
\biggr\}
\]
and $U_{J}$ is the characteristic function of the open compact subset $J\diag(p,1)J$ normalized so that $U_{J}(\diag(p,1))=\vol(J)^{-1}$.

The second example is the case of the spin $L$-function of $\GSp_{4}$.
We put
 \[
\GSp_{4}
:=
\biggl\{
g\in\GL_{4}
\,\bigg\vert\, 
{}^{t}\!g\begin{pmatrix}&-J_2\\J_2&\end{pmatrix}g=x\begin{pmatrix}&-J_2\\J_2&\end{pmatrix}\text{ for some $x\in \Gm$}
\biggr\},
\]
where $J_2$ denotes the anti-diagonal matrix whose anti-diagonal entries are one. 
We consider the spin representation $\mathrm{Spin}$ of the Langlands dual group $\GSpin_{5}(\C)$ of $\GSp_{4}$.
Let $(\pi, V)$ be an irreducible unramified principal series representation of $\GSp_{4}(\Q_{p})$. 
Then, in \cite[Section~2.4]{MR2636500} (see also \cite[Section~3.4.2]{LSZ}), Taylor established a similar identity to above for the spin $L$-function $L(s,\pi,\mathrm{Spin})$ in his study of $p$-adic family of Siegel modular forms.
More precisely, by using the Siegel parahoric subgroup $J$ of $\GSp_{4}(\Q_{p})$, which is defined by
\[
J
:=
\biggl\{
\begin{pmatrix}A&B\\C&D\end{pmatrix}\in\GSp_{4}(\Q_{p})
\,\bigg\vert\,
A,D\in\GL_{2}(\Z_{p}), B\in M_{2}(\Z_{p}), C\in M_{2}(p\Z_{p})
\biggr\}, 
\]
Taylor proved that
\[
L(s,\pi,\mathrm{Spin}) = \det\bigl(1 - p^{-(s+3/2)} \pi(U_{J}) \,\big\vert\, V^{J}\bigr)^{-1},
\]
where $U_{J}$ is the characteristic function of the open compact subset $J\diag(p,p,1,1)J$ normalized so that $U_{J}(\diag(p,p,1,1))=\vol(J)^{-1}$.

These formulas are, in addition to their original importance in a study of modular forms, also interesting from the purely representation-theoretic viewpoint as follows.
In the definition of the local $L$-functions for unramified representations, we utilize the Satake parameters determined by the Satake isomorphism.
This amounts to looking at the action of the spherical Hecke algebra on the subspace of spherical vectors, which is $1$-dimensional.
For example, in the case of $\GL_{2}$ mentioned above, we consider the action of all elements of $C_{c}^{\infty}(\GL_{2}(\Z_{p})\backslash \GL_{2}(\Q_{p})/\GL_{2}(\Z_{p}))$ (bi-$\GL_{2}(\Z_{p})$-invariant test functions on $\GL_{2}(\Q_{p})$) on the $1$-dimensional subspace $V_{\chi}^{\GL_{2}(\Z_{p})}$ of $\GL_{2}(\Z_{p})$-fixed vectors.
On the other hand, in the above formulas, the local $L$-function is expressed by the characteristic polynomial of the action of only one test function on the subspace whose dimension is the same as the degree of the local $L$-function.
For instance, in the case of $\GL_{2}$, the local $L$-function $L(s,\pi,\Std)$ is described by the action of a single test function $U_{J}$ on the subspace $V_{\chi}^{J}$, which is $2$-dimensional.

In this paper, we establish these kind of formulas for connected reductive groups and general finite-dimensional representations of the Langlands dual groups. 
For simplicity, we assume that $\G$ is split in the rest of this introduction.
Let $\bfT$ be a split maximal torus of $\G$ defined over $F$.
By fixing a Borel subgroup $\bfB$ containing $\bfT$, a dominance is determined on the characters and cocharacters of $\bfT$.
Then, to each dominant cocharacter $\mu$ of $\bfT$, we can associate an open compact subgroup $J_{\mu}$ of $\G(F)$ (see Section \ref{subsec:parahoric}) and a normalized characteristic function $\mathbbm{1}_{\mu}$ of a certain $J_{\mu}$-double coset (see Sections \ref{subsec:Hecke-Iwahori} and \ref{subsec:Hecke-parahoric}).
For a finite-dimensional representation $r$ of the Langlands dual group $\hat{\G}$, we put $\mcP^{+}(r)$ to be the set of dominant weights in $r$.
Note that each element $\mu$ of $\mcP^{+}(r)$ can be regarded as a dominant cocharacter of $\bfT$ through the duality between $\G$ and $\hat{\G}$.
For each $\mu \in\mcP^{+}(r)$, we write $m_{\mu}$ for the multiplicity of $\mu$ in $r$.
The following is the main result of this paper.  

\begin{thm}[Theorem \ref{thm:L} and Remark \ref{rem:main-the-split}]\label{thm:L_{intro}}
Let $\pi$ be an irreducible unramified representation of $\G(F)$.
We take an unramified character $\chi$ of $\bfT(F)$ such that $\pi$ is realized as a subquotient of the normalized parabolic induction $(I_{\chi},V_{\chi})$ of $\chi$.
Then we have an equality 
\[
L(s,\pi,r)
= \prod_{\mu \in\mcP^{+}(r)}
\det\bigl(1-q^{-(s+\langle\rho_{\bfB}, \mu\rangle)}I_{\chi}(\mathbbm{1}_{\mu}) \,\big\vert\, V_{\chi}^{J_{\mu}}\bigr)^{-m_{\mu}},
\]
where $\rho_{\bfB}$ is the half sum of the positive roots of $\bfT$ in $\G$.
\end{thm}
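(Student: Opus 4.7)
The plan is to reduce the theorem to a single Hecke-eigenvalue statement for each dominant weight $\cg$, and to establish that statement via the Bernstein presentation of the Iwahori--Hecke algebra.

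\textbf{Reduction.} The definition of $L(s,\pi,r)$ through the Satake parameter $s_\chi\in\widehat{T}(\C)$, combined with the decomposition of $r$ into $\widehat{T}$-weight spaces and the Weyl-invariance of the weight multiplicities, yields
\[
L(s,\pi,r)^{-1}
=\det\bigl(1-q^{-s}r(s_\chi)\bigr)
=\prod_{\cg\in\cP^+(r)}\prod_{\mu\in W\cdot\cg}
\bigl(1-q^{-s}\chi(\mu(\varpi))\bigr)^{m_\cg},
\]
using the identification $\mu(s_\chi)=\chi(\mu(\varpi))$ for each cocharacter $\mu$. Comparing this with the right-hand side of the theorem factor by factor, it suffices to prove, for every dominant cocharacter $\cg$,
\[
\det\bigl(1-q^{-(s+\langle\rho,\cg\rangle)}I_\chi(\mathbbm{1}_\cg)\,\big\vert\,V_\chi^{K_\cg}\bigr)
=\prod_{\mu\in W\cdot\cg}\bigl(1-q^{-s}\chi(\mu(\varpi))\bigr),
\]
i.e.\ that the eigenvalues of $I_\chi(\mathbbm{1}_\cg)$ on $V_\chi^{K_\cg}$, counted with multiplicity, are exactly the orbit $\{q^{\langle\rho,\cg\rangle}\chi(\mu(\varpi)):\mu\in W\cdot\cg\}$.

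\textbf{Eigenvalue computation.} I expect $K_\cg$ from Definition~\ref{def:open compact} to be the parahoric subgroup whose finite Weyl group is the stabilizer $W_\cg=\Stab_W(\cg)$; by Iwahori--Matsumoto theory this identifies $V_\chi^{K_\cg}$ with the $W_\cg$-invariants of the $|W|$-dimensional Iwahori-fixed subspace $V_\chi^I$, so $\dim V_\chi^{K_\cg}=|W/W_\cg|=|W\cdot\cg|$, which matches the degree of the polynomial on the right. Under the Bernstein presentation of the affine Hecke algebra, the normalized Hecke operator $\mathbbm{1}_\cg$ should correspond to $q^{\langle\rho,\cg\rangle}$ times the translation element $\theta_\cg$, the $q^{\langle\rho,\cg\rangle}$ arising because $\vol(K_\cg\cg(\varpi)K_\cg)=q^{2\langle\rho,\cg\rangle}\vol(K_\cg)$ while $\mathbbm{1}_\cg$ is normalized by $\vol(K_\cg)^{-1}$. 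On the Casselman basis $\{f_w\}_{w\in W}$ of $V_\chi^I$, the operator $\theta_\cg$ acts diagonally with $f_w$-eigenvalue $\chi((w^{-1}\cg)(\varpi))$; since $\cg$ is fixed by $W_\cg$, these eigenvalues are constant on left $W_\cg$-cosets in $W$, and so taking $W_\cg$-invariants leaves one eigenvector per coset, with eigenvalues indexed by $W_\cg\backslash W$. Multiplying by $q^{\langle\rho,\cg\rangle}$ gives precisely the required list of eigenvalues.

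\textbf{Main obstacle.} The principal technical difficulty is the careful bookkeeping of the $q$-powers: three independent normalizations must be reconciled, namely (i) the volume convention $\mathbbm{1}_\cg(\cg(\varpi))=\vol(K_\cg)^{-1}$ used to define the Hecke operator, (ii) the modulus-character twist $\delta^{1/2}$ built into the normalized induction $I_\chi$, and (iii) the normalization of the Bernstein generator $\theta_\cg$ that makes its action on the Casselman basis of $V_\chi^I$ diagonal with eigenvalues $\chi((w^{-1}\cg)(\varpi))$. A secondary technical step is to identify $\mathbbm{1}_\cg$, a priori an element of the parahoric Hecke algebra, explicitly with $q^{\langle\rho,\cg\rangle}\theta_\cg$ inside the Iwahori--Hecke algebra by decomposing the double coset $K_\cg\cg(\varpi)K_\cg$ into a union of Iwahori double cosets and summing the associated Bernstein elements.
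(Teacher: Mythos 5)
Your reduction step is exactly the one the paper makes: Lemma~\ref{lem:L-L} plus the bijection $\cP^+(r)\to\cP(r)/W$ reduces the theorem to the single-$\cg$ identity, which is Proposition~\ref{prop:L}. Beyond that, however, your route to Proposition~\ref{prop:L} is genuinely different from the paper's. The paper proceeds \emph{directly}: it decomposes $K_{\cg}\cg(\varpi)K_{\cg}$ into left $K_{\cg}$-cosets with explicit representatives $\cg(\varpi)\prod_\beta x_\beta(t_\beta)$ (Proposition~\ref{prop:double}, itself resting on the concave-function machinery of Section~3), proves a combinatorial triangulation result (Proposition~\ref{prop:triangle}) showing that $B\dot w\prod x_\beta(t_\beta)K_\cg$ can only land in $B\dot w'K_\cg$ for $w'W_\cg\le wW_\cg$, and then reads off the diagonal entries of the resulting lower-triangular matrix with respect to the basis of $V_\chi^{K_\cg}$ coming from the generalized Iwasawa decomposition (Proposition~\ref{prop:Iwasawa-parahoric}). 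Your plan instead delegates the triangulation to the Bernstein presentation and the Casselman basis. This is an attractive ``high-level'' alternative, and I believe it can be made to work, but as stated it leaves several real gaps that the paper's hands-on approach is designed to avoid.

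Concretely: (i) The Casselman basis diagonalizing the Bernstein elements $\theta_\mu$ on $V_\chi^I$ exists only when $\chi$ is regular; your eigenvalue computation therefore proves Proposition~\ref{prop:L} only for generic $\chi$, and you need to invoke a polynomial-identity / density argument to cover all unramified $\chi$. The paper's triangulation is uniform in $\chi$ and needs no such step. (ii) Your claimed identity ``$\mathbbm{1}_\cg = q^{\langle\rho,\cg\rangle}\theta_\cg$ as operators on $V_\chi^{K_\cg}$'' is not automatic: $\mathbbm{1}_\cg$ lives in $\cH(G,K_\cg)$ while $\theta_\cg$ lives in $\cH(G,I)$. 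What one can actually show is $\mathbbm{1}_\cg = q^{\langle\rho,\cg\rangle}\,e_{K_\cg}\,\theta_\cg$ (after verifying $K_\cg\cg(\varpi)K_\cg=\bigsqcup_{w\in W_\cg}I\dot w\cg(\varpi)I$ and the length-additivity $\ell(w t_\cg)=\ell(w)+\ell(t_\cg)$ for $w\in W_\cg$ and $\cg$ dominant), and then one must use the Bernstein--Lusztig relation to see that $\theta_\cg$ commutes with $e_{K_\cg}$ because $W_\cg$ stabilizes $\cg$; only then do the two operators agree on $V_\chi^{K_\cg}$. These steps are provable but substantial, and are not ``bookkeeping'' as your note suggests. (iii) Even granting (i) and (ii), you still need to show that $e_{K_\cg}$ picks out \emph{exactly one} line from each $\theta_\cg$-eigenspace $E_w\subset V_\chi^I$; the dimension count $\dim V_\chi^{K_\cg}=|W/W_\cg|$ alone does not preclude some eigenvalues appearing with multiplicity $0$ and others with multiplicity $\ge 2$. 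One can argue via the $\cH_{W_\cg}$-module structure of $E_w$ (it is, generically, the regular representation, which contains the trivial module once), but this too has to be said. In sum: the proposal is a plausible and genuinely different route, relying on heavier standard machinery where the paper is self-contained and elementary, but it is currently an outline with the load-bearing steps flagged rather than proved.
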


Note that if $(\G,r)$ is $(\GL_{2},\mathrm{Std})$ or $(\GSp_{4},\Spin)$, then the set $\mathcal{P}^{+}(r)$ is a singleton and the formula in Theorem~\ref{thm:L_{intro}} is nothing but the identity in the above examples (see Sections~\ref{subsec:GL} and \ref{subsec:GSp}). 
More generally, when $r$ is a quasi-minuscule representation (see Definition~\ref{def:minuscule}), 
we get a similar formula to the above examples (see Corollary \ref{cor:L2}). 
See Remark~\ref{rem:q-min} and Table~\ref{Table} for a list of $(\G,r)$ such that $\G$ is simple and $r$ is quasi-minuscule. 

We also remark that Theorem \ref{thm:L_{intro}} (Theorem \ref{thm:L}) is proved in a slightly more general setting where $\G$ might not be split and $\pi$ is a parahoric-spherical representation of $\G(F)$ (i.e., an irreducible smooth representation having a nonzero vector fixed by a parahoric subgroup, see Definition \ref{defn:spherical}).
When $\pi$ is not unramified but spherical for some parahoric subgroup, we consider the semisimple $L$-function (see Definition \ref{def:L-function}) instead of the usual $L$-function.

We explain the outline of the proof of Theorem \ref{thm:L_{intro}}.
The key in our proof is that the action of $I_{\chi}(\mathbbm{1}_{\mu})$ on the space $V_{\chi}^{J_{\mu}}$ can be triangulated with respect to an ordered basis of $V_{\chi}^{J_{\mu}}$.
To explain this, we assume that $\mu$ is strictly dominant for simplicity.
In this case, $J_{\mu}$ is an Iwahori subgroup, hence let us simply write $I$ for $J_{\mu}$.
Then we can find an explicit basis $\{v^{\vee}_{w}\}_{w\in W}$ of the subspace $V_{\chi}^{I}$ of $I$-fixed vectors in $V_{\chi}$, which is labelled by the elements of the Weyl group $W$ of $\bfT$ in $\G$.
With respect to this ordered basis of $V_{\chi}^{I}$, we have the following:

\begin{prop}[Proposition \ref{prop:triangle-Iwahori}]\label{prop:triangle-Iwahori_{Intro}}
For any $w\in W$, there exists a family $\{c_{w'}\}_{w'\in W, w'\geq w}$ of complex numbers satisfying
\[
I_{\chi}(\mathbbm{1}_{\mu})\cdot v^{\vee}_{w}
=
c_{w}\cdot v^{\vee}_{w}+\sum_{\begin{subarray}{c}w'\in W \\ w'> w \end{subarray}} c_{w'}\cdot v^{\vee}_{w'}.
\]
Moreover, the number $c_{w}$ can be determined explicitly.
\end{prop}

Once this proposition is proved, we immediately get a description of the characteristic polynomial of the action of $\mathbbm{1}_{\mu}$ on $V_{\chi}^{I}$.
Then we obtain Theorem \ref{thm:L_{intro}} by tracking the construction of the Satake parameter and rewriting the local $L$-function $L(s,\pi,r)$ in terms of the weights of the representation $r$.

Originally, we proved this proposition by making full use of the Chevalley basis by assuming that our group $\G$ is split.
By utilizing various relations of the Chevalley basis, we carried out the induction on the length of $w\in W$; then the problem is essentially reduced to the case of $\SL_{2}$.
Although the basic idea of our original proof is fairly simple in this way, we had to show a lot of technical statements about group-theoretic properties of parahoric subgroups to justify the induction step (cf.\ the older version of this paper; \cite{OST}).

However, after we released the first version of this paper, Thomas Haines told the authors that the above triangularity result can be proved in a more sophisticated way if we appeal to the theory of the Iwahori--Hecke algebra.
Furthermore, he also explained that his approach naturally enables us to prove Proposition \ref{prop:triangle-Iwahori_{Intro}} for any general (i.e., possibly non-split) connected reductive group $\G$.
Hence we decided to follow his idea and present his simplified version of the proof in this paper.

The outline of the proof of Proposition \ref{prop:triangle-Iwahori_{Intro}} is as follows.
We continue to assume that $\G$ is split in the following for simplicity.
We write $\bfN$ for the unipotent radical of $\bfB$ and put $\mcM$ to be the space $C_{c}^{\infty}(\bfT(\mcO_{F})\bfN(F)\backslash \G(F)/I)$, where $I$ denotes an Iwahori subgroup of $\G(F)$.
Then the space $\mcM$ has commuting actions of two kinds of $\C$-algebras; one is the group algebra $\mcR$ of the cocharacter group of $\bfT$, and the other one is the Iwahori--Hecke algebra $\mcH_{I}:=C_{c}^{\infty}(I\backslash\G(F)/I)$.
(See Section \ref{subsec:univ} for the details.)

This space $\mcM$ can be understood as the space of $I$-fixed vectors in the universal unramified principal series.
More precisely, any unramified character $\chi$ of $\bfT(F)$ defines a $\C$-algebra homomorphism from $\mcR$ to $\C$ (let us again write $\chi$).
Then, by specializing the $\mcR$-module $\mcM$ to a $\C$-module via $\chi$, we obtain the space $(\nInd_{\bfB(F)}^{\G(F)}\chi^{-1})^{I}$ of $I$-fixed vectors in the unramified principal series of $\chi^{-1}$, i.e., we have $\C\otimes_{\mcR,\chi}\mcM\cong(\nInd_{\bfB(F)}^{\G(F)}\chi^{-1})^{I}$.
Also, we can find an $\mcR$-basis $\{v_{w}\}_{w\in W}$ of $\mcM$ labelled by the elements of $W$.
With this language, Proposition \ref{prop:triangle-Iwahori_{Intro}} is rephrased as follows:

\begin{prop}[Proposition \ref{prop:Haines}]\label{prop:Haines_{Intro}}
For any $w\in W$, there exists a family $\{a_{w'}\}_{w'\in W, w'\leq w}$ of elements of $\mcR$ satisfying
\[
v_{w}\ast\Theta_{\mu}
=
a_{w}\cdot v_{w}+\sum_{\begin{subarray}{c}w'\in W \\ w'<w \end{subarray}} a_{w'}\cdot v_{w'}.
\]
Moreover, $a_{w}$ can be explicitly determined. 
Here $\Theta_{\mu}$ is an element of the Iwahori--Hecke algebra which is a constant multiple of $\mathbbm{1}_{\mu}$ (see Section \ref{subsec:Bernstein}). 
\end{prop}

The point here is that the ring structure of $\mcH_{I}$ and its action on $\mcR$ are well-investigated, especially in the works of Haines--Kottwitz--Prasad (split case, \cite{MR2642451}) and Rostami (general case, \cite{MR3355113}).
By using several basic relations of the Iwahori--Hecke algebra (e.g., the Bernstein relation, see Proposition \ref{prop:Bernstein}), we can prove Proposition \ref{prop:Haines_{Intro}} by an induction argument on the length of $w\in W$.

It seems that our computations in the previous version of the proof are essentially encoded in the various identities in the theory of the Iwahori--Hecke algebra.
In this sense, the core of the new proof presented in this paper is not totally different to our original proof.
Nevertheless, we would like to emphasize that most of the arguments are drastically simplified and our main result is far more generalized by following the formulation suggested by Haines.

\subsection*{Acknowledgments}
The authors are grateful to Miyu Suzuki for encouragement and constructive advice on a draft of this paper.
The authors also thank to Hiraku Atobe, Yoichi Mieda, and Lei Zhang for their helpful comments.
Finally, the authors express their sincere gratitude to Thomas Haines for his detailed explanation about how to prove our result via Iwahori--Hecke algebras.
He also kindly answered a lot of questions by the authors and encouraged them.

This work was supported by the Program for Leading Graduate Schools, MEXT, Japan and JSPS KAKENHI Grant Number 17J05451 and 20K14287 (Oi), 17J02456 (Sakamoto), and 17J01075 and 20J00024 (Tamori).
R.S. was also supported by RIKEN Center for Advanced Intelligence Project (AIP).

\subsection*{Notations and conventions}

Let $F$ be a non-archimedean local field.
We let $\mcO$, $\mfp$, and $k$ denote the ring of integers, its maximal ideal, and its residue field of $F$, respectively.
Let $q$ be the order of $k$.
We write $W_{F}$ and $I_{F}$ for the Weil group of $F$ and the inertia subgroup, respectively.
We fix a lift $\Frob$ of the geometric Frobenius in $\Gal(\overline{k}/k)$ (i.e., $x\mapsto x^{q^{-1}}$) to $W_{F}$.

For an algebraic variety $\mathbf{J}$ over $F$ (written by the bold letter), we let $J := \mathbf{J}(F)$ (written by the usual italic letter) denote the set of its $F$-valued points.
For an algebraic group $\bfT$, we write $X^{\ast}(\bfT)$ (resp.\ $X_{\ast}(\bfT)$) for the groups of characters $\Hom(\bfT,\Gm)$ (resp.\ cocharacters $\Hom(\Gm,\bfT)$) of $\bfT$.
When an algebraic group $\bfT$ is defined over $F$, we write $X^{\ast}(\bfT)_{F}$ and $X_{\ast}(\bfT)_{F}$ for the groups of $F$-rational characters and cocharacters of $\bfT$, respectively.

For an abelian group $M$, we write $M_{\R}$ for $M\otimes_{\Z}\R$.

\section{Iwahori subgroup and Iwahori--Hecke algebra}\label{sec:Iwahori}

In this section, we review the fundamental properties of the Iwahori--Hecke algebra needed for us.
The content of this section is based on the paper \cite{MR2642451} of Haines--Kottwitz--Prasad and also the paper \cite{MR3355113} of Rostami, which generalizes the results of \cite{MR2642451} from the split case to the non-split case.

\subsection{Iwahori subgroup and Kottwitz homomorphism}\label{subsec:Iwahori}
Let $\G$ be a connected reductive group over $F$.
We write $\mcB(\G,F)$ (resp.\ $\mcB_{\red}(\G,F)$) for the Bruhat--Tits building (resp.\ reduced Bruhat--Tits building) of $\G$ over $F$.
We fix a point $\bfo\in\mcB(\G,F)$ whose image in $\mcB_{\red}(\G,F)$ is a special vertex.
Let $K$ denote the special maximal parahoric subgroup of $G$ associated with $\bfo$.
We fix a maximal $F$-split torus $\bfA$ of $\G$ whose apartment $\mcA(\bfA,F)$ contains the point $\bfo$.
Note that, by using the fixed special point $\bfo$, the apartment $\mcA(\bfA,F)$ is identified with $X_{\ast}(\bfA)_{\R}$:
\[
X_{\ast}(\bfA)_{\R}\cong\mcA(\bfA,F)\colon \mu \mapsto \bfo+\mu.
\]

We furthermore fix an Iwahori subgroup $I$ contained in $K$.
Then $I$ determines an alcove $\mathcal{C}$ of the apartment $\mcA(\bfA,F)$ whose closure contains the special point $\bfo$.
Let $\Phi:=\Phi(\G,\bfA)$ be the set of roots of $\bfA$ in $\G$.
Then the alcove $\mathcal{C}$ determines a system $\Phi^{+}$ (resp.\ $\Phi^{-}$) of positive (resp.\ negative) roots in $\Phi$.
We put $\Phi_{\red}$ to be the set of reduced roots in $\Phi$ and put $\Phi_{\red}^{\pm}:=\Phi^{\pm}\cap\Phi_{\red}$.
We write $\Delta$ for the set of simple roots.

Let $\bfM$ be the centralizer of the fixed maximal $F$-split torus $\bfA$ in $\G$, which a minimal $F$-rational Levi subgroup of $\G$.
Let $\bfP$ be the minimal parabolic subgroup with Levi factor $\bfM$ such that the corresponding set of positive roots is given by $\Phi^{+}$.
We write $\kappa_{M}$ for the Kottwitz homomorphism for $M$ (see \cite[Section 7.7]{MR1485921}):
\[
\kappa_{M}\colon M\twoheadrightarrow X^{\ast}(Z(\hat{\bfM})^{I_{F}})^{\Frob},
\]
where 
\begin{itemize}
\item
$\hat{\bfM}$ is the Langlands dual group of $\bfM$,
\item
$(-)_{I_{F}}$ denotes the group of $I_{F}$-coinvariants, and
\item
$(-)^{\Frob}$ denotes the group of Frobenius invariants.
\end{itemize}
In the following, we simply write $\Lambda_{M}$ for $X^{\ast}(Z(\hat{\bfM})^{I_{F}})^{\Frob}$.
We put 
\[
M_{1}:=\Ker(\kappa_{M}\colon M\twoheadrightarrow \Lambda_{M}).
\]
Thus we have an identification $M/M_{1}\cong\Lambda_{M}$.
For an element $\mu\in\Lambda_{M}$, we write $\ul{\mu}$ for the inverse image $\kappa_{M}^{-1}(\mu)$ of $\mu$ in $M/M_{1}$ (we often loosely regard $\ul{\mu}\in M/M_{1}$ as an element of $M$ as long as it does not cause any confusion).

According to \cite[Section 5.2]{MR3355113}, we introduce a dominance on $\Lambda_{M}$ as follows.
We put $\nu_{M}\colon M\rightarrow\Hom(X^{\ast}(\bfM)_{F},\Z)$ to be the homomorphism defined by
\[
\nu_{M}(m):=[\chi\mapsto\val_{F}(\chi(m))].
\]
Then there exists a homomorphism $q_{M}\colon \Lambda_{M}\rightarrow\Hom(X^{\ast}(\bfM)_{F},\Z)$ such that $q_{M}\circ\kappa_{M}=\nu_{M}$.
By tensoring $\R$ over $\Z$ and composing with a natural isomorphism $\Hom(X^{\ast}(\bfM)_{F},\R)\cong X_{\ast}(\bfA)_{\R}$, we get an identification $\Lambda_{M,\R}\xrightarrow{\cong}X_{\ast}(\bfA)_{\R}$:
\[
\xymatrix{
M \ar^-{\kappa_{M}}[r] \ar_-{\nu_{M}}[rd] & \Lambda_{M} \ar^-{q_{M}}[d] \ar[rr] & & \Lambda_{M,\R} \ar^-{\cong}[d]\\
&\Hom(X^{\ast}(\bfM)_{F},\Z) \ar[r]& \Hom(X^{\ast}(\bfM)_{F},\R) \ar^-{\cong}[r]& X_{\ast}(\bfA)_{\R}
}
\]
(see \cite[Sections 2.5--2.7]{MR3355113} for details).
Hence we can transport a dominance on $X_{\ast}(\bfA)_{\R}\,(\cong\mcA(\bfA,F))$, which is determined by the alcove $\mathcal{C}$, to $\Lambda_{M,\R}$.
We say that an element $\mu$ of $\Lambda_{M}$ is dominant if its image in $\Lambda_{M,\R}$ is dominant.

For any $\mu\in\Lambda_{M}$ and $\alpha\in X^{\ast}(\bfA)_{\R}$, 
we often simply write $\langle\alpha,\mu\rangle$ for $\langle \alpha,q_{M}(\mu)\rangle$, which is the value at $(\alpha,q_{M}(\mu))$ of the natural pairing $\langle-,-\rangle$ on $X^{\ast}(\bfA)_{\R}\times X_{\ast}(\bfA)_{\R}$.

\begin{rem}\label{rem:q}
In \cite{MR3355113}, $\kappa_{M}$ and $\nu_{M}$ are defined to be $-\kappa_{M}$ and $-\nu_{M}$, respectively (see \cite[Section 2.7, 519 page]{MR3355113}).
Since $q_{M}$ is not affected by the difference of these normalizations (the sign differences cancel out), the identification between $\Lambda_{M,\R}$ and $X_{\ast}(\bfA)_{\R}$ in this paper is the same as that in \cite{MR3355113}.
\end{rem}

\subsection{Iwahori--Weyl group}\label{subsec:Iwahori-Weyl}

Let $\tilde{W}$ denote the Iwahori--Weyl group defined by
\[
\tilde{W}:=N_{\G}(\bfA)(F)/M_{1},
\]
where $N_{\G}(\bfA)$ is the normalizer group of $\bfA$ in $\G$.
We write $W:=W_{\G}(\bfA)(F)=(N_{\G}(\bfA)/\bfM)(F)$ for the relative Weyl group of the relative root system $\Phi$.
Then we have a short exact sequence (see \cite[Lemma 3.1.1]{MR3355113})
\[
1
\rightarrow \Lambda_{M}
\rightarrow \tilde{W}
\rightarrow W
\rightarrow1.
\]
Let $W_{\bfo}$ be the subgroup of $\tilde{W}$ generated by the reflections with respect to the walls of the fixed alcove $\mathcal{C}$ containing the point $\bfo$.
By \cite[Lemma 5.0.1]{MR2602034}, the natural map
\[
W_{\bfo}\subset\tilde{W}\twoheadrightarrow W
\]
is bijective since $\bfo$ is a special point.
Accordingly, we can express $\tilde{W}$ as a semi-direct product (i.e., $W$ is regarded as a subgroup of $\tilde{W}$ through the splitting $W\xrightarrow{1:1}W_{\bfo}$):
\[
\tilde{W} \cong \Lambda_{M}\rtimes W.
\]
See \cite[Sections 2.8 and 2.9]{MR3355113} and also \cite{MR3481263} for the details.
%\cite[1.3.7]{BTI}

\subsection{Parahoric subgroups}\label{subsec:parahoric}
For any facet $\mcF$ of the apartment $\mcA(\bfA,F)$, we let $J_{\mcF}$ denote the parahoric subgroup associated with $\mcF$.
Note that then, with this notation, we have $K=J_{\bfo}$ and $I=J_{\mcC}$.

The fixed special point $\bfo$ defines ``a valuation of root datum'' of $\G$, which consists of group-theoretic data satisfying several axiomatic properties (see \cite[Section 6.1]{MR0327923} for the definition of a valuation of root datum).
In particular, for each $\alpha\in\Phi$, the root subgroup $U_{\alpha}=\bfU_{\alpha}(F)$ of $\G$ has a descending filtration $\{U_{\alpha,r}\}_{r\in\R}$.

\begin{rem}\label{rem:Chevalley}
When $\G$ is split, the choice of a special point $\bfo$ of the Bruhat--Tits building $\mcB(\G,F)$, or equivalently, its associated valuation of root data can be made explicitly in terms of a Chevalley basis.
More precisely, a Chevalley basis of $\G$ consists of homomorphisms $x_{\alpha}\colon \Ga\to \bfU_{\alpha}\subset\G$ for each $\alpha\in\Phi$ satisfying several axioms, where $\bfU_{\alpha}$ denotes the root subgroup of $\alpha$ in $\G$ (cf.\ \cite[page 21, Corollary 1]{MR3616493}).
Then, for $\alpha\in\Phi$, the filtration $\{U_{\alpha,r}\}_{r\in\R}$ of $U_{\alpha}=\bfU_{\alpha}(F)$ is given by $U_{\alpha,r}=x_{\alpha}(\{a\in F \mid \val_{F}(a)\geq r\})$.
\end{rem}

%When $\mcF$ is a facet contained in the closure $\ol{\mcC}$ of the fixed alcove and $\mcF$ contains $\bfo$, the parahoric subgroup $J_{\mcF}$ is explicitly given by
%\[
%J_{\mcF}=\langle M_{1},U_{\alpha,f_{\mu}(\alpha)} \mid \alpha\in\Phi_{\red} \rangle,
%\]
%where $f_{\mu}\colon\Phi\rightarrow\tilde{\R}$ is a function given by
%\[
%f_{\mu}
%=\begin{cases}
%0&\text{if $\langle\alpha,\mu\rangle\geq0$,}\\
%0+&\text{if $\langle\alpha,\mu\rangle<0$.}
%\end{cases}
%\]

For a dominant element $\mu\in\Lambda_{M}$, we define an open compact subgroup $J_{\mu}$ such that $I\subset J_{\mu}\subset K$ by
\[
J_{\mu}:=\langle M_{1},U_{\alpha,f_{\mu}(\alpha)} \mid \alpha\in\Phi_{\red} \rangle,
\]
where $f_{\mu}\colon\Phi_{\rm red}\rightarrow\R$ is a function given by
\[
f_{\mu}
:=\begin{cases}
0&\text{if $\langle\alpha,\mu\rangle\geq0$,}\\
0+&\text{if $\langle\alpha,\mu\rangle<0$}
\end{cases}
\]
($0+$ denotes any sufficiently small positive number).
This group $J_{\mu}$ is nothing but the parahoric subgroup $J_{\mcF}$ associated with the facet $\mcF$ such that 
\begin{itemize}
\item
$\mcF$ is contained in the closure $\ol{\mcC}$ of the fixed alcove $\mcC$,
\item
$\ol{\mcF}$ contains $\bfo$, and
\item
$\mcF$ contains $\bfo+\varepsilon\mu$ for any sufficiently small $\varepsilon>0$.
\end{itemize}
If we put $W_{\mcF}$ to be the subgroup of $\tilde{W}$ generated by the reflections with respect to the walls containing the facet $\mcF$ (note that  $W_{\mcF}$ is automatically contained in $W_{\bfo}$), 
%\[
%W_{\mcF}:=\{w\in \tilde{W} \mid \text{$w(\bfx)=\bfx$ for any $\bfx\in\mcF$} \},
%\]
then we have
\[
J_{\mu}\, (=J_{\mcF})=IW_{\mcF}I.
\]
This follows from that the Iwahori subgroup $I$ and the Iwahori--Weyl group $\tilde{W}$ form a Tits system and that a parahoric subgroup is a parabolic subgroup in the sense of a Tits system (see \cite[Proposition 5.2.12]{MR756316} and \cite[Section 1.5]{MR0327923}).
See also an expository of Yu \cite[Section 7.3]{MR3525846}.

Through the isomorphism $W_{\bfo}\cong W$ mentioned in Section \ref{subsec:Iwahori-Weyl}, the subgroup $W_{\mcF}$ of $W_{\bfo}$ is identified with the subgroup $W_{\mu}$ of $W$ given by
\[
W_{\mu}
:=\langle s_{\alpha} \mid \alpha\in\Phi, s_{\alpha}(\mu)=\mu\rangle
=\langle s_{\alpha} \mid \alpha\in\Phi, \langle\alpha,\mu\rangle=0\rangle,
\]
where $s_{\alpha}$ denotes the reflection with respect to a root $\alpha\in\Phi$.

\subsection{Some lemmas on Iwahori subgroups}\label{subsec:lemmas}

In terms of the valuation of root datum associated with $\bfo$, the Iwahori subgroup $I$ is explicitly described as follows:
\[
I=\langle M_{1}, U_{\alpha,0}, U_{\beta,0+} \mid \alpha\in\Phi_{\red}^{+},  \beta\in\Phi_{\red}^{-}\rangle.
\]
Furthermore, the Iwahori subgroup $I$ has the following uniqueness of the product expression (see \cite[Section 3.1.1]{MR546588}).
%(See \cite[Section 5.2]{MR756316}, \cite[Lemma 4.58]{MR3444233})
\begin{prop}\label{prop:uniqueness}
The natural multiplication map
%\[
%\prod_{\alpha\in\Phi_{\red}^{+}} U_{\alpha}
%\times M
%\times \prod_{\alpha\in\Phi_{\red}^{-}} U_{\alpha}
%\rightarrow G
%\]
%induces a bijection
\[
\prod_{\alpha\in\Phi_{\red}^{+}} U_{\alpha,0}
\times M_{1}
\times \prod_{\beta\in\Phi_{\red}^{-}} U_{\beta,0+}
\rightarrow I
\]
is bijective with any orders on $\Phi_{\red}^{+}$ and $\Phi_{\red}^{-}$ (also, the products over $\Phi_{\red}^{+}$ and $\Phi_{\red}^{-}$ can be swapped).
\end{prop}

For an $F$-rational standard parabolic subgroup $\mathbf{Q}$ of $\G$ with Levi decomposition $\mathbf{Q}=\mathbf{LU}$, we introduce the following notation ($\ol{\bfU}$ denotes the opposite to $\bfU$): 
\begin{itemize}
\item
We put $\Phi_{\red}^{+}(\bfU):=\{\alpha\in\Phi^{+}_{\red}\mid \bfU_{\alpha}\subset\bfU\}$ and define
\[
I_{U}:=\prod_{\alpha\in \Phi_{\red}^{+}(\bfU)} U_{\alpha,0} \subset G.
\]
\item
We put $\Phi_{\red}^{\pm}(\bfL):=\{\alpha\in\Phi^{\pm}_{\red}\mid \bfU_{\alpha}\subset\bfL\}$ and define
\[
I_{L}:=\prod_{\alpha\in \Phi_{\red}^{+}(\bfL)} U_{\alpha,0}\times M_{1}\times \prod_{\alpha\in \Phi_{\red}^{-}(\bfL)} U_{\alpha,0+} \subset G,
\]
\item
We put $\Phi_{\red}^{-}(\ol{\bfU}):=\{\alpha\in\Phi^{-}_{\red}\mid \bfU_{\alpha}\subset\ol{\bfU}\}$ and define
\[
I_{\ol{U}}:=\prod_{\alpha\in \Phi_{\red}^{-}(\ol{\bfU})} U_{\alpha,0+} \subset G.
\]
\end{itemize}
Note that the definitions of $I_{U}$, $I_{L}$, $I_{\ol{U}}$ are independent of the choice of orders on the sets of roots and that these sets are subgroups of $G$.

\begin{lem}\label{lem:Iwahori}
For any $F$-rational standard parabolic subgroup $\mathbf{Q}$ of $\G$ with Levi decomposition $\mathbf{Q}=\mathbf{LU}$, the following hold.
\begin{enumerate}
\item
We have $I=I_{U}I_{L}I_{\ol{U}}=I_{\ol{U}}I_{L}I_{U}$.
\item
For any $w\in W$, we have $wI_{\ol{U}}w^{-1}\subset I$.
\item
For any dominant $\mu\in \Lambda_{M}$, we have $\mu I_{U}\mu^{-1}\subset I_{U}$ and $\mu^{-1} I_{\ol{U}}\mu\subset I_{\ol{U}}$.
\end{enumerate}
\end{lem}

\begin{proof}
\begin{enumerate}
\item
This is clear from Proposition \ref{prop:uniqueness} and the definitions of $I_{U}$, $I_{L}$, and $I_{\ol{U}}$.

\item
Since we regard $w\in W$ as an element of $\tilde{W}$ through the isomorphism $W_{\bfo}\cong W$, the action of $w$ on the apartment $\mcA(\bfA,F)$ stabilizes the special point $\bfo$.
Hence $w$ stabilizes the valuation of root datum associated with $\bfo$.
In particular, we have $wU_{\alpha,r}w^{-1}=U_{w(\alpha),r}$  for any $\alpha\in\Phi$ and $r\in\R$.
Thus we get $wU_{\alpha,0+}w^{-1}\subset I$ for any $\alpha\in\Phi_{\red}^{-}$, which implies that we have $wI_{\ol{U}}w^{-1}\subset I$.

\item
Since $\{U_{\alpha,r}\}_{r\in\R}$ consists of a part of the valuation of root datum, we have $\ul{\mu} U_{\alpha,0}\ul{\mu}^{-1}=U_{\alpha,\langle\alpha,\nu_{M}(\ul{\mu})\rangle}$ (see\cite[Proposition 6.2.10]{MR0327923}).
The fact that $q_{M}\circ\kappa_{M}=\nu_{M}$ shows that 
\[
\langle\alpha,\nu_{M}(\ul{\mu})\rangle
=\langle\alpha,\nu_{M}(\kappa_{M}^{-1}(\mu))\rangle
=\langle\alpha,q_{M}(\mu)\rangle.
\]
Since the dominance on $\Lambda_{M}$ is introduced through the homomorphism $q_{M}$ (see Section \ref{subsec:Iwahori}), we have $\langle \alpha,q_{M}(\mu)\rangle\geq0$ for any $\alpha\in\Phi^{+}$.
Thus we have $\ul{\mu}U_{\alpha,0}\ul{\mu}^{-1}\subset U_{\alpha,0}$, hence get $\ul{\mu}I_{U}\ul{\mu}^{-1}\subset I_{U}$.

We can check that $\ul{\mu}^{-1} U_{\alpha,0+}\ul{\mu}\subset U_{\alpha,0+}$ for any $\alpha\in\Phi_{\red}^{-}$ (hence $\ul{\mu}^{-1}I_{\ol{U}}\ul{\mu}\subset I_{\ol{U}}$) in a similar way.
\end{enumerate}
\end{proof}

\begin{lem}\label{lem:HKP-pre}
\begin{enumerate}
\item
For any $w\in W$, we have $wIw^{-1}I\cap NI=I$.
\item
For any dominant element $\mu\in \Lambda_{M}$, we have $\ul{\mu} I\ul{\mu}^{-1}I\cap NI=I$.
\end{enumerate}
\end{lem}

\begin{proof}
Let us show (1).
Since the inclusion $wIw^{-1}I\cap NI\supset I$ is obvious, we only need to prove the converse inclusion $wIw^{-1}I\cap NI\subset I$.
To see this, it suffices to check that $wIw^{-1}\cap NI\subset I$.
As we have $I=I_{N}I_{M}I_{\ol{N}}$ by Lemma \ref{lem:Iwahori} (1), we have $NI=NI_{M}I_{\ol{N}}$.
Hence the multiplication map
\[
N\times M\times \ol{N}\rightarrow G,
\]
which is injective (\cite[Th\'eor\`eme 2.2.3]{MR756316}), induces a bijection $N\times I_{M}\times I_{\ol{N}}\xrightarrow{1:1} NI$.
If we define a function $f_{w}\colon\Phi_{\red}\rightarrow\R$ by
\[
f_{w}(\alpha)=
\begin{cases}
0&\text{if $w^{-1}(\alpha)\in\Phi_{\red}^{+}$,}\\
0+&\text{if $w^{-1}(\alpha)\in\Phi_{\red}^{-}$}
\end{cases}
\]
($0+$ denotes any sufficiently small positive number), then we have
\[
wIw^{-1}=\langle M_{1}, U_{\alpha,f_{w}(\alpha)} \mid \alpha\in \Phi_{\red}\rangle.
\]
We put $I'_{N}:=\prod_{\alpha\in\Phi_{\red}^{+}}U_{\alpha,f_{w}(\alpha)}$  
%$I'_{M}:=wI_{M}w^{-1}\,(=I_{M})$, 
and $I'_{\ol{N}}:=\prod_{\alpha\in\Phi_{\red}^{-}}U_{\alpha,f_{w}(\alpha)}$. 
Note that $wI_{M}w^{-1} = I_{M}=M_1$. 
Then, similarly to Proposition \ref{prop:uniqueness}, we see that the multiplication map $N\times M\times \ol{N}\rightarrow G$ induces a bijection
\[
I'_{N}\times I_{M}\times I'_{\ol{N}}\rightarrow wIw^{-1}.
\]
%Hence, any element of $wIw^{-1}\cap NI$ is given by the product of elements of $I'_{N}$, $I_{M}\cap I'_{M}$, and $I_{\ol{N}}\cap I'_{\ol{N}}$. 
Since $I'_{N}\subset I_{N}$ by Lemma \ref{lem:Iwahori} (2), we obtain  
\[
wIw^{-1} \cap NI \subset  I'_{N} I_{M} (I'_{\ol{N}} \cap I_{\ol{N}}) \subset I_{N}I_{M} I_{\ol{N}} = I. 
\]
%As we have $I'_{N}\subset I_{N}$, $I_{M}\cap I'_{M}\subset I_{M}$, and $I_{\ol{N}}\cap I'_{\ol{N}}\subset I_{\ol{N}}$, we conclude that $wIw^{-1}\cap NI$ is contained in $I$.

The same argument works for (2) by using Lemma \ref{lem:Iwahori} (3) instead of Lemma \ref{lem:Iwahori} (2).
\end{proof}

\subsection{Iwahori--Hecke algebra}\label{subsec:Iwahori-Hecke}

Let $\mcH_{I}:=C_{c}^{\infty}(I\backslash G/I)$ be the Iwahori Hecke algebra, which has a structure of a $\C$-algebra via convolution product denoted by $\ast$.
Here we use the Haar measure $dg$ on $G$ normalized so that $dg(I)=1$ in the definition of the convolution product.
Recall that we have the Iwahori decomposition (see \cite[Lemma 4.57]{MR3444233}):
\[
G=\bigsqcup_{w\in\tilde{W}} IwI.
\]
Thus, if we put $T_{w}$ to be the characteristic function $\mathbbm{1}_{IwI}$ of the double coset $IwI$ for $w\in\tilde{W}$, then the set $\{T_{w}\}_{w\in\tilde{W}}$ forms a $\C$-basis of $\mcH_{I}$.

According to \cite[Definition 5.3.1]{MR3355113}, we normalize $T_{w}$ for $w\in\tilde{W}$ by
\[
\overline{T}_{w}:=\bfq(w)^{-\frac{1}{2}}T_{w}.
\]
Here, we define a function $\bfq\colon\tilde{W}\rightarrow\Z_{>0}$ by
\[
\bfq(w):=[IwI:I].
\]
This quantity can be expressed in a root-theoretic way as follows (see \cite[Section 1.4]{MR3481263} for the details).
We let $\tilde{W}^{\nr}$ denote the Iwahori--Weyl group over the completion $\breve{F}$ of the maximal unramified extension of $F$.
Then, by \cite[Proposition 1.11]{MR3481263}, $\tilde{W}$ is contained in $\tilde{W}^{\nr}$ and we have
\[
\bfq(w)=q^{\ell^{\nr}(w)}
\]
for any $w\in \tilde{W}$, where $\ell^{\nr}$ denotes the length function on $\tilde{W}^{\nr}$.

\begin{rem}\label{rem:rho}
For any dominant element $\lambda\in\Lambda_{M}$, we can compute $\ell^{\nr}(w)$ by using the result of Lusztig \cite{MR991016} on affine Weyl groups as follows.
Let $\bfS$ be a maximal $\breve{F}$-split torus of $\G$ which is defined over $F$ and contains $\bfA$.
Let $\Sigma$ be the scaled root system associated with $\Phi(\G,\bfS)$, i.e., the unique reduced root system in $X^{\ast}(\bfS)_{\R}$ such that hyperplanes determined by the affine functions $\Sigma+\Z$ on the apartment $\mcA(\bfS,\breve{F})$ coincide with those determined by the affine roots with respect to $\Phi(\G,\bfS)$ (see \cite[Section 2.3]{MR3355113} for details).
Then $\Lambda_{M}$ can be regarded as a subgroup of the affine Weyl group associated with the reduced root system $\Sigma$ (see \cite[Section 3.3]{MR3355113}).
By putting $\rho^{\nr}$ to be the half sum of all positive roots in $\Sigma$, we have $\frac{1}{2}\ell^{\nr}(\lambda)=\langle\rho^{\nr},\lambda\rangle$ for any dominant element $\lambda\in\Lambda_{M}$ by \cite[Section 1.4 (f)]{MR991016}.
Here, we consider the positivity on $\Sigma$ determined by the alcove of $\mcA(\bfS,\breve{F})$ whose Frobenius fixed part coincides with our fixed alcove of $\mcA(\bfA,F)$ (see \cite[Section 1.2]{MR3481263}).
\end{rem}

\subsection{Universal unramified principal series}\label{subsec:univ}

Recall that we fixed a minimal $F$-rational parabolic subgroup $\bfP$ of $\bfG$ with Levi factor $\bfM$.
We let $\bfN$ denote the unipotent radical of $\bfP$.
Hence we have a Levi decomposition $\bfP=\bfM\bfN$.

We put
\[
\mcM:=C_{c}^{\infty}(M_{1}N\backslash G/I).
\]
For $w\in\tilde{W}$, we put $v_{w}:=\mathbbm{1}_{M_{1}NwI}$.
Since we have
\[
G=\bigsqcup_{w\in\tilde{W}}IwI=\bigsqcup_{w\in\tilde{W}}M_{1}NwI,
\]
(see \cite[Lemma 4.61]{MR3444233}), the set $\{v_{w}\}_{w\in\tilde{W}}$ forms a $\C$-basis of  $\mcM$.

Let $\mcR$ be the group algebra $\C[\Lambda_{M}]$ of $\Lambda_{M}$, which is isomorphic to $C_{c}^{\infty}(M/M_{1})$.
For $\mu\in\Lambda_{M}$, we let $R_{\mu}$ denote the element of the group algebra $\C[\Lambda_{M}]$ corresponding to $\mu$.
Then $\{R_{\mu}\}_{\mu\in \Lambda_{M}}$ forms a $\C$-basis of $\mcR$.
We make $\mcM$ into a left $\mcR$-module by
\[
(r\cdot f)(g)
:= \int_{M} r(y)\delta_{P}^{\frac{1}{2}}(y)f(y^{-1}g)\,dy
\]
for any $r\in \mcR$ and $f\in\mcM$, where $\delta_{P}$ denotes the modulus character of $P$ and the Haar measure $dy$ on $M$ is normalized so that $dy(M_{1})=1$.

We will next make $\mcM$ into a right $\mcH_{I}$-module.
For this, we consider the set $C_{c}^{\infty}(M_{1}N\backslash G)$ of compactly supported left-$M_{1}N$-invariant smooth functions.
(We call this space the universal unramified principal series.)
Then we have a right action of the full Hecke algebra $\mcH:=C_{c}^{\infty}(G)$ on $C_{c}^{\infty}(M_{1}N\backslash G)$ given by $f\mapsto f\ast h$ for any $f\in C_{c}^{\infty}(M_{1}N\backslash G)$ and $h\in\mcH$.
This action naturally induces a right action of the Iwahori--Hecke algebra $\mcH_{I}$ on $\mcM=C_{c}^{\infty}(M_{1}N\backslash G)^{I}$.

In summary, with respect to these actions, $\mcM$ has a structure of an $(\mcR,\mcH_{I})$-bimodule.

\begin{rem}\label{rem:left-right}
Since $C_{c}^{\infty}(M_{1}N\backslash G)$ is a smooth representation of $G$ via right translation (let $\rho_{\migi}$ denote this representation), we may also consider the left action of $\mcH$ on $C_{c}^{\infty}(M_{1}N\backslash G)$ given by
\[
\rho_{\migi}(h)(f):=\int_{G} h(g)\cdot\rho_{\migi}(g)(f)\,dg
\]
for $h\in\mcH$ and $f\in C_{c}^{\infty}(M_{1}N\backslash G)$.
The relationship between the right action $(-)\ast h$ and the left action $\rho_{\migi}(h)(-)$ is described as follows.
Let $\iota\colon\mcH\rightarrow\mcH$ be the anti-involution given by $\iota(h)(g):=h(g^{-1})$ (i.e., $\iota$ is a $\C$-linear automorphism of $\mcH$ satisfying $\iota(h_{1}\ast h_{2})=\iota(h_{2})\ast \iota(h_{1})$ for any $h_{1},h_{2}\in\mcH$).
Then we have
\[
(-)\ast\iota(h)=\rho_{\migi}(h)(-).
\]
Indeed, for any $h\in\mcH$, $f\in C_{c}^{\infty}(M_{1}N\backslash G)$, and $x\in G$, we have
\begin{align*}
\bigl(f\ast\iota(h)\bigr)(x)
&=\int_{G}f(g)\cdot\iota(h)(g^{-1}x)\,dg\\
&=\int_{G}f(g)\cdot h(x^{-1}g)\,dg\\
&=\int_{G}f(xg)\cdot h(g)\,dg\\
&=\int_{G}h(g)\cdot \bigl(\rho_{\migi}(g)(f)\bigr)(x)\cdot \,dg
=\bigl(\rho_{\migi}(h)(f)\bigr)(x).
\end{align*}
\end{rem}

We put 
\[
X^{\w}(M):=\Hom(M/M_{1},\C^{\times})
\]
and call an element of $X^{\w}(M)$ an weakly unramified character of $M$.
Each element $\chi\in X^{\w}(M)$ defines a $\C$-algebra homomorphism
\[
\mcR=\C[\Lambda_{M}]\twoheadrightarrow\C\colon R_{\mu}\mapsto\chi(\ul{\mu}).
\]
If we again write $\chi$ for this homomorphism, then we have an isomorphism
%\[
%\C\otimes_{\mcR,\chi}C_{c}^{\infty}(M_{1}N\backslash G)
%\cong\C\otimes_{\mcR,\chi}(\nInd_{B}^{G}\chi_{\univ}^{-1})
%\cong \nInd_{P}^{G}\chi^{-1}.
%\]
%Hence we also have
\[
\C\otimes_{\mcR,\chi}\mcM
%\cong\C\otimes_{\mcR,\chi}(\nInd_{B}^{G}\chi_{\univ}^{-1})^{I}
\cong(\nInd_{P}^{G}\chi^{-1})^{I}
\]
as right $\mcH_{I}$-modules (\cite[Lemma 4.63 (a)]{MR3444233}), where $\nInd_{P}^{G}$ denotes the normalized parabolic induction.

\subsection{Several basic identities on $\mcR$, $\mcH_{I}$, and $\mcM$}\label{subsec:Bernstein}

Recall that
\begin{itemize}
\item
we put $T_{w}:=\mathbbm{1}_{IwI}\in\mcH_{I}$ for $w\in\tilde{W}$ (hence $\{T_{w}\}_{w\in\tilde{W}}$ is a $\C$-basis of $\mcH_{I}$),
\item
we put $\{R_{\mu}\}_{\mu\in \Lambda_{M}}$ to be the natural $\C$-basis of the group algebra $\mcR=\C[\Lambda_{M}]$, and
\item
we put $v_{w}:=\mathbbm{1}_{M_{1}NwI}\in\mcM$ for $w\in\tilde{W}$ (hence $\{v_{w}\}_{w\in\tilde{W}}$ is a $\C$-basis of $\mcM$).
\end{itemize}

Let $\rho_{\bfP}\in X^{\ast}(\bfA)_{\R}$ be the element satisfying $\delta^{\frac{1}{2}}_{P}(\ul{\mu})=q^{-\langle\rho_{\bfP},\mu\rangle}$.
Note that this is explicitly given by
\[
\rho_{\bfP}
=\frac{1}{2}\sum_{\alpha\in\Phi_{\red}^{+}} \bigl(\dim_{F}(\mfg_{\alpha})\cdot\alpha+\dim_{F}(\mfg_{2\alpha})\cdot2\alpha\bigr),
\]
where $\mfg_{\alpha}$ and $\mfg_{2\alpha}$ denote the root subspaces of $\mfg$ associated with the roots $\alpha$ and $2\alpha$, respectively (we simply put $\mfg_{2\alpha}:=0$ when $2\alpha$ is not a root).

\begin{lem}\label{lem:Rv}
For any $\mu\in\Lambda_{M}$, we have $R_{\mu}\cdot v_{1}=q^{-\langle\rho_{\bfP},\mu\rangle}\cdot v_{\mu}$.
\end{lem}

\begin{proof}
By the definition of the left $\mcR$-module structure of $\mcM$, we have 
\begin{align*}
(R_{\mu}\cdot v_{1})(g)
&=\int_{M} R_{\mu}(y)\delta_{P}^{\frac{1}{2}}(y)\mathbbm{1}_{M_{1}NI}(y^{-1}g)\,dy\\
&=\int_{\ul{\mu}M_{1}} \delta_{P}^{\frac{1}{2}}(y)\mathbbm{1}_{M_{1}NI}(y^{-1}g)\,dy\\
&=\delta_{P}^{\frac{1}{2}}(\ul{\mu}) \int_{M_{1}} \mathbbm{1}_{M_{1}NI}(y^{-1}\ul{\mu}^{-1}g)\,dy
\end{align*}
for any $g\in G$.
This is not zero if only if $y^{-1}\ul{\mu}^{-1}g$ belongs to $M_{1}NI$ for some $y\in M_{1}$, which is equivalent to that $g$ belongs to $\ul{\mu}M_{1}NI=M_{1}N\ul{\mu}I$.
In other words, $R_{\mu}\cdot v_{1}$ is supported on $M_{1}N\ul{\mu}I$.
When $g$ belongs to $M_{1}N\ul{\mu}I$, we have
\[
R_{\mu}\cdot v_{1}(g)
=
\delta_{P}^{\frac{1}{2}}(\ul{\mu}) \int_{M_{1}} \mathbbm{1}_{M_{1}NI}(y^{-1}\ul{\mu}^{-1}g)\,dy
=
\delta_{P}^{\frac{1}{2}}(\ul{\mu}) dy(M_{1})= q^{-\langle\rho_{\bfP},\mu\rangle}.
\]
Thus we have $R_{\mu}\cdot v_{1}=q^{-\langle\rho_{\bfP},\mu\rangle}\cdot v_{\mu}$.
\end{proof}

The following proposition in the split case can be found in \cite{MR2642451} ((1): \cite[(1.6.1)]{MR2642451}; (2): \cite[(1.6.3)]{MR2642451}).
\begin{prop}\label{prop:HKP}
\begin{enumerate}
\item
For any $w\in W\subset\tilde{W}$, we have $v_{1}\ast T_{w}=v_{w}$.
\item
For any dominant element $\mu\in \Lambda_{M}$, we have $v_{1}\ast T_{\mu}=v_{\mu}$.
\end{enumerate}
\end{prop}

\begin{proof}
\begin{enumerate}
\item
By the definitions of $v_{1}$ and $T_{w}$, we have
\begin{align*}
(v_{1}\ast T_{w})(x)
&=\int_{G}\mathbbm{1}_{M_{1}NI}(g)\cdot \mathbbm{1}_{IwI}(g^{-1}x)\,dg\\
&=\int_{M_{1}NI}\mathbbm{1}_{IwI}(g^{-1}x)\,dg.
\end{align*}
Let $g\in M_{1}NI$.
If the integrand $\mathbbm{1}_{IwI}(g^{-1}x)$ is not zero, then $x$ must belong to $gIwI$.
By Lemma \ref{lem:Iwahori} (1), we have $M_{1}NI=M_{1}NI_{N}I_{M}I_{\ol{N}}=M_{1}NI_{M}I_{\ol{N}}$.
Since $M$ normalizes $N$ and $I_{M}\subset M_{1}$, we have $M_{1}NI_{M}I_{\ol{N}}=M_{1}NI_{\ol{N}}$.
Hence $gIwI$ is contained in $M_{1}NI_{\ol{N}}wI$, which is equal to $M_{1}NwI$ by Lemma \ref{lem:Iwahori} (2).
Thus the function $v_{1}\ast T_{w}$ is supported on $M_{1}NwI$.

Let $x$ be an element of $M_{1}NwI$.
Let us write $x=mnwy$ with $m\in M_{1}$, $n\in N$, $y\in I$.
Then $g^{-1}x$ belongs to $IwI$ if and only if $g$ belongs to $mnwyIw^{-1}I=mnwIw^{-1}I$.
Hence we get
\begin{align*}
(v_{1}\ast T_{w})(x)
&=dg(mnwIw^{-1}I\cap M_{1}NI)\\
&=dg(wIw^{-1}I\cap M_{1}NI).
\end{align*}
By Lemma \ref{lem:HKP-pre} (1), we have $dg(wIw^{-1}I\cap M_{1}NI)=dg(I)=1$.
Thus we conclude that $v_{1}\ast T_{w}$ is equal to $\mathbbm{1}_{M_{1}NwI}$, which equals $v_{w}$ by definition.

\item
The proof is similar to that of claim (1) (the same argument works by using Lemmas \ref{lem:Iwahori} (3) and  \ref{lem:HKP-pre} (2) instead of Lemmas \ref{lem:Iwahori} (2) and  \ref{lem:HKP-pre} (1), respectively).
\end{enumerate}
\end{proof}

By \cite[Lemma 1.6.1]{MR2642451} (split case) and \cite[Lemma 4.63 (b)]{MR3444233} (non-split case), $\mcM$ is free of rank $1$ with generator $v_{1}$ as an $\mcH_{I}$-module.
In particular, we have an isomorphism of $\C$-algebras
\[
\mcH_{I}\cong\End_{\mcH_{I}}(\mcM)\colon h' \mapsto [v_{1}\ast h\mapsto v_{1}\ast h'\ast h].
\] 
Accordingly, the left $\mcR$-action on $\mcM$ induces an injective $\C$-algebra homomorphism
\[
\mcR\hookrightarrow\End_{\mcH_{I}}(\mcM)\cong\mcH_{I}.
\]

\begin{defn}[{\cite[Definition 5.3.1]{MR3355113}}]\label{defn:Theta}
For any element $\mu\in\Lambda_{M}$, we put
\[
\Theta_{\mu}:=\overline{T}_{\lambda_{1}}\ast\overline{T}_{\lambda_{2}}^{-1}
\]
by taking dominant elements $\lambda_{1},\lambda_{2}\in\Lambda_{M}$ satisfying $\mu=\lambda_{1}-\lambda_{2}$.
(See \cite[Definition 5.3.1]{MR3355113} for the well-definedness of this definition.)
\end{defn}

\begin{rem}\label{rem:Theta}
Note that, for $\mu\in\Lambda_{M}$, the quantity $\bfq(\mu)$ and the element $T_{\mu}$ are defined by regarding $\mu$ as an element of the Iwahori--Weyl group $\tilde{W}$ through the Kottwitz homomorphism $\kappa_{M} \colon M\twoheadrightarrow \Lambda_{M}$.
As we mentioned in Remark \ref{rem:q}, in \cite{MR3355113}, the symbol $\kappa_{M}$ denotes the $(-1)$-multiple of the usual Kottwitz homomorphism $\kappa_{M}$.
Accordingly, our $\Theta_{\mu}$ is equal to Rostami's $\Theta_{-\mu}$.
\end{rem}

\begin{prop}\label{prop:Theta}
The image of $R_{\mu}$ under the above homomorphism $\mcR\hookrightarrow\mcH_{I}$ is given by $q^{\langle\rho^{\nr}-\rho_{\bfP},\mu\rangle}\cdot \Theta_{\mu}$.
In other words, we have
\[
q^{\langle\rho^{\nr}-\rho_{\bfP},\mu\rangle}\cdot v_{1}\ast\Theta_{\mu}
=
R_{\mu}\cdot v_{1}.
\]
\end{prop}

\begin{proof}
Let $\mu$ be an element of $\Lambda_{M}$.
Note that, for any dominant element $\lambda\in\Lambda_{M}$, we have
\[
R_{\lambda}\cdot v_{1} = q^{-\langle\rho_{\bfP},\lambda\rangle}\cdot v_{\lambda}= q^{-\langle\rho_{\bfP},\lambda\rangle}\cdot v_{1} \ast T_{\lambda},
\]
or, equivalently,
\[
R_{\lambda}^{-1}\cdot v_{1}= q^{\langle\rho_{\bfP},\lambda\rangle}\cdot v_{1} \ast T_{\lambda}^{-1}
\]
by Lemma \ref{lem:Rv} and Proposition \ref{prop:HKP} (2).
Hence, by taking dominant elements $\lambda_{1}$ and $\lambda_{2}$ of $\Lambda_{M}$ such that $\mu=\lambda_{1}-\lambda_{2}$ and applying this identity to $\lambda_{1}$ and $\lambda_{2}$, we get
\begin{align*}
R_{\mu}\cdot v_{1}
&=R_{\lambda_{1}}\cdot R_{\lambda_{2}}^{-1}\cdot v_{1}\\
&= q^{\langle\rho_{\bfP},\lambda_{2}\rangle}\cdot R_{\lambda_{1}}\cdot v_{1} \ast T_{\lambda_{2}}^{-1}\\
&= q^{\langle\rho_{\bfP},\lambda_{2}-\lambda_{1}\rangle}\cdot v_{1} \ast T_{\lambda_{1}}\ast T_{\lambda_{2}}^{-1}
= q^{-\langle\rho_{\bfP},\mu\rangle}\cdot v_{1}\ast T_{\lambda_{1}}\ast T_{\lambda_{2}}^{-1}.
\end{align*}
Since $\Theta_{\mu}$ is defined by
\[
\Theta_{\mu}
=\overline{T}_{\lambda_{1}}\ast\overline{T}_{\lambda_{2}}^{-1}
=\bfq(\lambda_{1})^{-\frac{1}{2}}\cdot \bfq(\lambda_{2})^{\frac{1}{2}}\cdot T_{\lambda_{1}}\ast T_{\lambda_{2}}^{-1},
\]
we get
\[
R_{\mu}\cdot v_{1}
= q^{-\langle\rho_{\bfP},\mu\rangle}\cdot \bfq(\lambda_{1})^{\frac{1}{2}}\cdot \bfq(\lambda_{2})^{-\frac{1}{2}} \cdot v_{1}\ast\Theta_{\mu}.
\]
Since we have $\bfq(\lambda_{i})^{\frac{1}{2}}=q^{\frac{1}{2}\ell^{\nr}(\lambda_{i})}$ (see Section \ref{subsec:Iwahori-Weyl}) and $\frac{1}{2}\ell^{\nr}(\lambda_{i})=\langle\rho^{\nr},\lambda_{i}\rangle$ (see Remark \ref{rem:rho}), we get
\[
q^{-\langle\rho_{\bfP},\mu\rangle}\cdot \bfq(\lambda_{1})^{\frac{1}{2}}\cdot \bfq(\lambda_{2})^{-\frac{1}{2}}
=q^{\langle\rho^{\nr}-\rho_{\bfP},\mu\rangle}.
\]
\end{proof}

\begin{cor}\label{cor:Theta-H}
For any dominant element $\mu\in \Lambda_{M}$, we have
\[
\Theta_{\mu}
=
q^{-\langle\rho^{\nr},\mu\rangle}\cdot T_{\mu}.
\]
\end{cor}

\begin{proof}
Since $\mcM$ is a free $\mcH_{I}$-module of rank $1$ with generator $v_{1}$, it suffices to check that 
\[
v_{1}\ast\Theta_{\mu}=q^{-\langle\rho^{\nr},\mu\rangle}\cdot v_{1}\ast T_{\mu}.
\]
By Proposition \ref{prop:Theta}, we have $v_{1}\ast\Theta_{\mu}=q^{\langle-\rho^{\nr}+\rho_{\bfP},\mu\rangle}\cdot R_{\mu}\cdot v_{1}$.
Since we have $R_{\mu}\cdot v_{1}=q^{-\langle\rho_{\bfP},\mu\rangle}\cdot v_{\mu}=q^{-\langle\rho_{\bfP},\mu\rangle}\cdot v_{1}\ast T_{\mu}$ by Lemma \ref{lem:Rv} and Proposition \ref{prop:HKP} (2), we get $q^{\langle-\rho^{\nr}+\rho_{\bfP},\mu\rangle}\cdot R_{\mu}\cdot v_{1}=q^{-\langle\rho^{\nr},\mu\rangle}\cdot v_{1}\ast T_{\mu}$.
\end{proof}

\begin{rem}\label{rem:q-ur}
Assume that $\G$ is split over $\breve{F}$. 
In this case, the set of affine roots for the apartment $\mcA(\bfS,\breve{F})$ is given by $\Phi(\G,\bfS)+\Z$ under the identification $\mcA(\bfS,\breve{F})\cong X_{\ast}(\bfS)_{\R}$ given by the Chevalley special point (Remark \ref{rem:Chevalley}) since $\val_F\circ x_{\alpha}^{-1}(U_{\alpha})=\Z$ for any $\alpha\in\Phi(\G,\bfS)$. 
Hence the scaled root system $\Sigma$ (see Remark \ref{rem:rho}) equals $\Phi(\G,\bfS)$ as $\Phi(\G,\bfS)$ is reduced. 
By the definition of the positive system of $\Sigma=\Phi(\G,\bfS)$, any positive root in $\Phi(\G,\bfS)$ restricts to a positive root in $\Phi(\G,\bfA)$ or zero. 
Therefore $\rho^{\nr}$ maps to $\rho_{\bfP}$ under the restriction from $X^{\ast}(\bfS)_{\R}$ to $X^{\ast}(\bfA)_{\R}$, and $q^{\langle\rho^{\nr}-\rho_{\bfP},\mu\rangle}=1$ for any $\mu\in\Lambda_{M}$.
%When $\G$ is split, $\rho_{\bfP}$ and $\rho^{\nr}$ coincide since both are simply given by the half sum of all positive roots.
%In particular, we have $q^{\langle\rho^{\nr}-\rho_{\bfP},\mu\rangle}=1$.
%This implies that 
In particular, the definition of $\Theta_{\mu}$ given in this paper coincides with that by \cite[Section 1.7]{MR2642451} when $\G$ is split.
\end{rem}

Finally, we introduce the Bernstein relation, which will play an important role in the induction step of the proof of Proposition \ref{prop:Haines}.
\begin{prop}[Bernstein relation, {\cite[Proposition 5.4.2]{MR3355113}}]\label{prop:Bernstein}
Let $\alpha\in\Delta$ be a simple root with simple reflection $s_{\alpha}\in W$.
Then, for any $\mu\in\Lambda_{M}$, there exist a family of complex numbers $\{\bfq_{\bar{j}}(s_{\alpha})\}_{j=0,\ldots,N-1}$ satisfying 
\[
T_{s_{\alpha}}\ast\Theta_{\mu}
=\Theta_{s_{\alpha}(\mu)}\ast T_{s_{\alpha}}+\sum_{j=0}^{N-1}\bfq_{\bar{j}}(s_{\alpha})\Theta_{\mu-j\alpha^{\vee}},
\]
where $\alpha^{\vee}$ denotes the coroot corresponding to $\alpha$.
\end{prop}

See \cite[Section 5.4]{MR3355113} for the details of the notations used in the statement of Proposition \ref{prop:Bernstein}.

\section{Hecke action on the unramified principal series}\label{sec:Hecke}

\subsection{Triangularity of the action of $\Theta_{\mu}$}\label{subsec:triangle}

The space $\mcM$ is free as $\mcR$-module with a basis $\{v_{w}\}_{w\in W}$ (see \cite[Lemma 4.63 (c)]{MR3444233}).
Our aim in this section is to compute the action of $\Theta_{\mu}$ on $\mcM$ in terms of the basis $\{v_{w}\}_{w\in W}$.
For this, we recall basics on the Bruhat order on $W$.

For $\alpha\in\Phi$, we write $s_{\alpha}$ for the reflection with respect to $\alpha$.
For each $w\in W$, we put $\ell(w)$ to be the length of $w$, which is defined by
\[
\ell(w):= \# \{\alpha\in\Phi^{+}_{\red} \mid w(\alpha)\in\Phi^{-}\}.
\]
For $w,w'\in W$, write $w'\rightarrow w$ if $\ell(w')<\ell(w)$ and $w=w's_{\alpha}$ for some $\alpha\in\Phi$.
Then we define $w'\leq w$ if there is a sequence $w'=w_0\rightarrow w_1\rightarrow \cdots \rightarrow w_m=w$ for some nonnegative integer $m$ and $w_0,\ldots, w_m\in W$. 
The relation is a partial order on $W$ and is called the Bruhat order. 
It is immediate that we have $\ell(w')< \ell(w)$ if $w'< w$. 
\begin{lem}[{\cite[Lemma 1.6]{MR1066460}}]\label{lem:order}
For every $w\in W$ and $\alpha\in\Delta$, we have
\[
\begin{cases}
w < ws_{\alpha}&\text{if $w(\alpha)\in\Phi^{+}$,}\\
w > ws_{\alpha}&\text{if $w(\alpha)\in\Phi^{-}$.}
\end{cases}
\]
%Hence, in particular, for every $w\in W$ and $\alpha\in\Delta$, we have
%\[
%\begin{cases}
%\ell(w) < \ell(ws_{\alpha})&\text{if $w(\alpha)\in\Phi^{+}$,}\\
%\ell(w) > \ell(ws_{\alpha})&\text{if $w(\alpha)\in\Phi^{-}$.}
%\end{cases}
%\]
\end{lem}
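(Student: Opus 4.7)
By the definition of the Bruhat order in Section~\ref{subsec:order}, the lemma is equivalent to the purely combinatorial assertion $\ell(ww_\alpha) > \ell(w) \iff w\alpha \in \Phi^+$, since $ww_\alpha$ and $w$ already differ by right multiplication by the single reflection $w_\alpha$ and no iteration of the basic relation is needed. I would prove this via the inversion set $N(w) := \{\beta \in \Phi^+ : w\beta \in \Phi^-\}$, which satisfies $\ell(w) = |N(w)|$ by the formula recalled at the start of the subsection.

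The plan is to partition $\Phi^+ \setminus \{\alpha\} = A \sqcup B$ according to how $w_\alpha$ acts on signs: $A := \{\beta \in \Phi^+ \setminus \{\alpha\} : w_\alpha\beta \in \Phi^+\}$ and $B := \{\beta \in \Phi^+ \setminus \{\alpha\} : w_\alpha\beta \in \Phi^-\}$. Then $w_\alpha$ restricts to an involution of $A$, while $\sigma(\beta) := -w_\alpha\beta$ is a fixed-point-free involution of $B$ (fixed-point-freeness uses reducedness of $\Phi$, which excludes $w_\alpha\beta = -\beta$ for $\beta \neq \pm\alpha$). Using these bijections to rewrite $N(ww_\alpha) = \Phi^+ \cap (ww_\alpha)^{-1}\Phi^-$ case by case over the three-piece partition $\{\alpha\} \sqcup A \sqcup B$, a direct bookkeeping gives
\[
\ell(ww_\alpha) - \ell(w) = \sgn(w\alpha) + \bigl(|B| - 2\,|B \cap N(w)|\bigr),
\]
where $\sgn(w\alpha) = \pm 1$ according to whether $w\alpha \in \Phi^\pm$.

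The main obstacle is then to pin down the sign of the correction term $|B| - 2\,|B \cap N(w)|$, which must be non-negative when $w\alpha \in \Phi^+$ and non-positive when $w\alpha \in \Phi^-$. For this I would use the geometric identity
\[
\beta + \sigma(\beta) = \langle \beta, \alpha^\vee \rangle \cdot \alpha,
\]
valid for every $\beta \in B$, in which the coefficient $\langle\beta,\alpha^\vee\rangle$ is automatically strictly positive (this positivity is precisely the condition forcing $w_\alpha\beta \in \Phi^-$). Applying $w$ yields $w\beta + w\sigma(\beta) = \langle\beta,\alpha^\vee\rangle \cdot w\alpha$, and since the positive and negative cones in $\R\Phi$ are each closed under addition and meet only at $0$, when $w\alpha \in \Phi^+$ one cannot have both $w\beta, w\sigma(\beta) \in \Phi^-$; each $\sigma$-orbit in $B$ then contributes at most one element to $N(w)$, whence $2|B \cap N(w)| \le |B|$. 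The dual conclusion holds when $w\alpha \in \Phi^-$. Without this identity one is genuinely stuck, since the correction term can be large in absolute value (e.g.\ the reflection in the longest positive root of $A_2$ changes length by $\pm 3$); this step is exactly where the reduced-root-system geometry enters.
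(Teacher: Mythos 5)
Your argument is correct. The paper gives no proof of its own here; the lemma is simply cited from Humphreys~\cite[Lemma~1.6]{MR1066460}, so there is no in-paper argument to compare against. Your self-contained proof via inversion sets is sound: the partition $\Phi^+ = \{\alpha\}\sqcup A\sqcup B$, the bijections $w_\alpha$ on $A$ and $\sigma = -w_\alpha$ on $B$, and the resulting identity $\ell(ww_\alpha)-\ell(w)=\sgn(w\alpha)+\bigl(|B|-2|B\cap N(w)|\bigr)$ all check out, and the cone argument applied to $w\beta+w\sigma(\beta)=\langle\beta,\alpha^\vee\rangle\,w\alpha$ (with $\langle\beta,\alpha^\vee\rangle>0$ for $\beta\in B$) correctly pins down the sign of the correction term: each $\sigma$-pair contributes at most one inversion to $N(w)$ when $w\alpha\in\Phi^+$ and at least one when $w\alpha\in\Phi^-$. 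Reducedness of $\Phi$ is used exactly twice, as you observe, to make $\sigma$ fixed-point free and to force $\langle\beta,\alpha^\vee\rangle>0$. The reduction from the Bruhat-order statement to the length inequality is also legitimate: $w$ and $ww_\alpha$ differ by a single reflection, hence have lengths of opposite parity, so the two elements are always comparable under the covering relation defined in Section~\ref{subsec:order} and the direction is determined by which has larger length. One small remark: your parenthetical that positivity of $\langle\beta,\alpha^\vee\rangle$ is ``precisely the condition forcing $w_\alpha\beta\in\Phi^-$'' overstates things slightly (it is necessary but not sufficient, e.g.\ $\beta=\alpha_1+\alpha_2$, $\alpha=\alpha_1$ in type $A_2$), but you only use the correct direction, so nothing breaks.
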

%\begin{lem}[{\cite[Lemma 1.6]{MR1066460}}]
%For every $w\in W$ and $\alpha\in\Phi^+$, we have
%\[
%\begin{cases}
%w < ws_{\alpha}&\text{if $w(\alpha)\in\Phi^{+}$,}\\
%w > ws_{\alpha}&\text{if $w(\alpha)\in\Phi^{-}$.}\\
%\end{cases}
%\]
%\end{lem}
The bijection $w\mapsto w^{-1}$ of $W$ is an automorphism as an ordered set (see \cite[Corollary 2.2.5]{MR2133266}).  
From this fact and \cite[Proposition 2.2.7]{MR2133266}, we obtain the following 
%By applying this automorphism to \cite[Proposition 2.2.7]{MR2133266}, we obtain the following 
\begin{lem}\label{lem:lifting}
Let $w, w''\in W$ and $\alpha\in\Delta$. 
If $w'=ws_{\alpha}<w, w''<w$ and $w''<w''s_{\alpha}$, then we have $w''s_{\alpha}<w$ and $w''<w'$.
\end{lem}

The following is the key to prove our main theorem of this paper.

\begin{prop}\label{prop:Haines}
For any $w\in W$ and $\mu\in \Lambda_{M}$, there exists a family $\{a_{w'}\}_{w'\in W, w'<w}$ of elements of $\mcR$ satisfying
\begin{align}\label{eq:Haines}
v_{w}\ast\Theta_{\mu}
=
q^{\langle\rho_{\bfP}-\rho^{\nr},w(\mu)\rangle}\cdot R_{w(\mu)}\cdot v_{w}+\sum_{\begin{subarray}{c}w'\in W \\ w'<w \end{subarray}} a_{w'}\cdot v_{w'}.
\end{align}
\end{prop}

\begin{proof}
We prove the assertion by the induction on the length $\ell(w)$ of $w\in W$. 
When $\ell(w)=0$, i.e., $w=1$, the equality $\eqref{eq:Haines}$ is nothing but Proposition \ref{prop:Theta}.

We consider the case where $\ell(w)=1$, i.e., $w=s_{\alpha}$ for some simple root $\alpha\in\Phi$.
Since any element $w'\in W$ satisfying $w'<w$ is necessarily equal to $1$, our task in this case is to find an element $a_{1}$ of $\mcR$ satisfying
\[
v_{s_{\alpha}}\ast\Theta_{\mu}
=
q^{\langle\rho_{\bfP}-\rho^{\nr},s_{\alpha}(\mu)\rangle}\cdot R_{s_{\alpha}(\mu)}\cdot v_{s_{\alpha}}+a_{1}\cdot v_{1}.
\]
By using Propositions \ref{prop:HKP} (1),  \ref{prop:Bernstein}, and  \ref{prop:Theta} in this order, we get
\begin{align*}
v_{s_{\alpha}}\ast\Theta_{\mu}
&\overset{\ref{prop:HKP}}{=}v_{1}\ast T_{s_{\alpha}}\ast\Theta_{\mu}\\
&\overset{\ref{prop:Bernstein}}{=} v_{1}\ast\biggl(\Theta_{s_{\alpha}(\mu)}\ast T_{s_{\alpha}} +\sum_{j=0}^{N-1}\bfq_{\bar{j}}(s_{\alpha})\Theta_{\mu-j\alpha^{\vee}}\biggr)\\
&\overset{\ref{prop:Theta}}{=} q^{\langle\rho_{\bfP}-\rho^{\nr},s_{\alpha}(\mu)\rangle}\cdot R_{s_{\alpha}(\mu)}\cdot v_{1}\ast T_{s_{\alpha}}+\sum_{j=0}^{N-1}a_{s_{\alpha},j}\cdot R_{\mu-j\alpha^{\vee}}\cdot v_{1},
\end{align*}
where $a_{s_{\alpha},j}$ is given by $q^{\langle\rho_{\bfP}-\rho^{\nr},\mu-j\alpha^{\vee}\rangle}\cdot \bfq_{\bar{j}}(s_{\alpha})$.
Thus it suffices to put
\[
a_{1}:=\sum_{j=0}^{N-1}a_{s_{\alpha},j}\cdot R_{\mu-j\alpha^{\vee}}\in \mcR.
\]
%(note that $v_{1}\cdot T_{s_{\alpha}}=v_{s_{\alpha}}$ again by Proposition \ref{prop:HKP} (1)).

Next, we  consider the case where $\ell(w)>1$.
% (we assume the assertion for any $w'\in W$ satisfying $\ell(w')<\ell(w)$).
In this case, there exists a simple root $\alpha\in\Delta$ such that $w(\alpha)\in\Phi^-$. By Lemma \ref{lem:order}, $w':=ws_{\alpha}$ satisfies $w'<w$.
Then we have $T_{w}=T_{w'}\ast T_{s_{\alpha}}$ by the Iwahori--Matsumoto relation (see \cite[Section 7.2]{MR2642451} (split case) and \cite[Proposition 4.1.1 (ii)]{MR3355113} (non-split case)).
Thus, by using Proposition \ref{prop:HKP} (1) and this relation, we have
\[
v_{w}\ast\Theta_{\mu}
=v_{1}\ast T_{w}\ast\Theta_{\mu}
=v_{1}\ast T_{w'}\ast T_{s_{\alpha}}\ast\Theta_{\mu}.
\]
By using Propositions \ref{prop:Bernstein} and  \ref{prop:HKP} (1) in this order, we get
\begin{align*}
v_{1}\ast T_{w'}\ast T_{s_{\alpha}}\ast\Theta_{\mu}
&\overset{\ref{prop:Bernstein}}{=}v_{1}\ast T_{w'}\ast\biggl(\Theta_{s_{\alpha}(\mu)}\ast T_{s_{\alpha}} +\sum_{j=0}^{N-1}\bfq_{\bar{j}}(s_{\alpha})\Theta_{\mu-j\alpha^{\vee}}\biggr)\\
&\overset{\ref{prop:HKP}}{=}v_{w'}\ast\biggl(\Theta_{s_{\alpha}(\mu)}\ast T_{s_{\alpha}}+\sum_{j=0}^{N-1}\bfq_{\bar{j}}(s_{\alpha})\Theta_{\mu-j\alpha^{\vee}}\biggr).
\end{align*}
By the induction hypothesis, the second term $\sum_{j=0}^{N-1}\bfq_{\bar{j}}(s_{\alpha})v_{w'}\ast\Theta_{\mu-j\alpha^{\vee}}$ can be written as the $\mcR$-linear sum of $v_{w''}$'s for $w''\in W$ satisfying $w''\leq w'$ (hence, in particular, $w''<w$). 
Let us consider the first term $v_{w'}\ast\Theta_{s_{\alpha}(\mu)}\ast T_{s_{\alpha}}$.
By the induction hypothesis, there exists a family $\{a_{w''}\}_{w''\in W, w''<w'}$ of elements of $\mcR$ such that
\[
v_{w'}\ast\Theta_{s_{\alpha}(\mu)}
=
q^{\langle\rho_{\bfP}-\rho^{\nr},w'(s_{\alpha}(\mu))\rangle}\cdot R_{w'(s_{\alpha}(\mu))}\cdot v_{w'}+\sum_{\begin{subarray}{c}w''\in W \\ w''<w' \end{subarray}} a_{w''}\cdot v_{w''}.
\]
Hence, by noting that $w'(s_{\alpha}(\mu))=w(\mu)$, we get
\[
v_{w'}\ast\Theta_{s_{\alpha}(\mu)}\ast T_{s_{\alpha}}
=
q^{\langle\rho_{\bfP}-\rho^{\nr},w(\mu)\rangle}\cdot R_{w(\mu)}\cdot v_{w'}\ast T_{s_{\alpha}}+\sum_{\begin{subarray}{c}w''\in W \\ w''<w' \end{subarray}} a_{w''}\cdot v_{w''}\ast T_{s_{\alpha}}.
\]
The first term of the right hand side equals $q^{\langle\rho_{\bfP}-\rho^{\nr},w(\mu)\rangle}\cdot R_{w(\mu)}\cdot v_{w}$ since 
\[v_{w'}\ast T_{s_{\alpha}}=v_{1}\ast T_{w'}\ast T_{s_{\alpha}}=v_{1}\ast T_{w}=v_{w}\] 
by Proposition \ref{prop:HKP} (1) and the Iwahori--Matsumoto relation. 
Therefore it suffices to show that for any $w''\in W$ with $w''<w'$, the element $v_{w''}\ast T_{s_{\alpha}}=v_{1}\ast T_{w''}\ast T_{s_{\alpha}}$ is expressed as a $\C$-linear combination of elements in $\{v_{w'''}\mid w'''<w\}$. 

If $w''<w''s_{\alpha}$, we see 
$v_{1}\ast T_{w''}\ast T_{s_{\alpha}}=v_{1}\ast T_{w''s_{\alpha}}=v_{w''s_{\alpha}}$
by Proposition \ref{prop:HKP} (1) and the Iwahori--Matsumoto relation. 
Since $w'=ws_{\alpha}<w$, $w''<w$ and $w''<w''s_{\alpha}$, 
we have $w''s_{\alpha}<w$ by Lemma \ref{lem:lifting}. 
Hence the assertion holds when $w''<w''s_{\alpha}$. 

If $w''>w''s_{\alpha}$, Proposition \ref{prop:HKP} (1) together with the Iwahori--Matsumoto relations $T_{w''}=T_{w''s_{\alpha}}\ast T_{s_{\alpha}}$ and $T_{s_{\alpha}}\ast T_{s_{\alpha}}=(\bfq(s_{\alpha})-1)T_{s_{\alpha}}+\bfq(s_{\alpha})T_1$ (see \cite[Proposition 4.1.1 (ii), (iii)]{MR3355113}) shows that 
%we have 
\begin{align*}
v_{1}\ast T_{w''}\ast T_{s_{\alpha}}
&=v_{1}\ast T_{w''s_{\alpha}}\ast T_{s_{\alpha}}\ast T_{s_{\alpha}}\\
&=v_{1}\ast T_{w''s_{\alpha}}\ast ((\bfq(s_{\alpha})-1)T_{s_{\alpha}}+\bfq(s_{\alpha})T_1)\\
&=v_{1}\ast ((\bfq(s_{\alpha})-1)T_{w''}+\bfq(s_{\alpha})T_{w''s_{\alpha}})\\
&=(\bfq(s_{\alpha})-1)v_{w''}+\bfq(s_{\alpha})v_{w''s_{\alpha}}
\end{align*}
%by Proposition \ref{prop:HKP} (1) and the Iwahori--Matsumoto relations $T_{w''}=T_{w''s_{\alpha}}\ast T_{s_{\alpha}}, T_{s_{\alpha}}\ast T_{s_{\alpha}}=(\bfq(s_{\alpha})-1)T_{s_{\alpha}}+\bfq(s_{\alpha})T_1$ (see \cite[Proposition 4.1.1 (ii),(iii)]{MR3355113}). 
Since $w''s_{\alpha}<w''<w$, the assertion also holds.
\end{proof}

\subsection{The case of Iwahori}\label{subsec:Hecke-Iwahori}

Let $V_{\chi}:=\nInd_{P}^{G}\chi$ be the principal series with respect to an weakly unramified character $\chi\colon M/M_{1}\rightarrow\C^{\times}$.
Recall that the space $\C\otimes_{\mcR,\chi^{-1}}\mcM$ equipped with the right $\mcH_{I}$-action  is nothing but $V_{\chi}^{I}$ as noted in Section \ref{subsec:univ}.
Hence the image of $\{v_{w}\}_{w\in W}$ in $\C\otimes_{\mcR,\chi}\mcM$ (for which we again write $\{v_{w}\}_{w\in W}$) forms a $\C$-basis of $V_{\chi^{-1}}^{I}$ for any $\chi$.

\begin{prop}\label{prop:triangle-Iwahori}
Let $\mu\in\Lambda_{M}$ be a strictly dominant element, i.e., a dominant element satisfying $\langle\alpha,\mu\rangle>0$ for any positive root $\alpha\in\Phi$.
Then there exists a $\C$-basis $\{v^{\vee}_{w}\}_{w\in W}$ of $V_{\chi}^{I}$ such that, for any $w\in W$, there exists a family $\{c_{w'}\}_{w'\in W, w'>w}$ of complex numbers satisfying
\[
I_{\chi}(\mathbbm{1}_{\mu})\cdot v^{\vee}_{w}
=
q(w,\mu) \cdot \Bigl(
\chi\circ\kappa_{M}^{-1}(w(\mu))\cdot v^{\vee}_{w}+\sum_{\begin{subarray}{c}w'\in W \\ w'>w \end{subarray}} c_{w'}\cdot v^{\vee}_{w'}
\Bigr),
\]
where $\mathbbm{1}_{\mu}:=\mathbbm{1}_{I\ul{\mu} I}$ and $q(w,\mu):=q^{\langle\rho^{\nr},\mu\rangle+\langle\rho_{\bfP}-\rho^{\nr},w(\mu)\rangle}$.
\end{prop}

\begin{proof}
Note that we have $\mathbbm{1}_{\mu}=T_{\mu}$.
Thus, by Remark \ref{rem:left-right}, the left action $I_{\chi}(\mathbbm{1}_{\mu})$ on $V_{\chi}^{I}$ coincides with the right action of 
$\iota(T_{\mu}) \in \mcH_{I}$ on $\C\otimes_{\mcR,\chi^{-1}}\mcM$. 
By Corollary \ref{cor:Theta-H}, we have
\[
\iota(T_{\mu}) = q^{\langle\rho^{\nr},\mu\rangle} \cdot \iota(\Theta_{\mu}). 
\]

Let us consider the left action of $\iota(\Theta_{\mu})$ on $\mcM$.
Note that we have an $\mcR$-valued perfect pairing 
\[
(-,-) \colon \mcM \times \mcM \longrightarrow \mcR
\] 
satisfying the following conditions (see \cite[Section 1.9]{MR2642451}):
\begin{itemize}
\item[(A)]$(r_{1}\cdot m_{1}, r_{2} \cdot m_{2}) = \iota_{\mcR}(r_{1})\cdot r_{2}\cdot (m_{1}, m_{2})$ for any $r_{1}, r_{2} \in \mcR$ and $m_{1}, m_{2} \in \mcM$, 
\item[(B)] $(m_{1}\ast h, m_{2}) = (m_{1}, m_{2}\ast\iota(h))$ for any $h \in \mcH_{I}$ and $m_{1}, m_{2} \in \mcM$.
\end{itemize}
Here $\iota_{\mcR}$ denotes the anti-involution of $\mcR$ defined by $\iota_{\mcR}(r)(x):=r(x^{-1})$ for any $r\in\mcR=\C[\Lambda_{M}]\cong C_{c}^{\infty}(M/M_{1})$.
Then the pairing $(-,-)$ induces a perfect pairing 
\[
(-,-)_{\chi} \colon V_{\chi}^{I} \times V_{\chi^{-1}}^{I} \longrightarrow \C. 
\]

Let $\{v^{\vee}_{w}\}_{w\in W}$ be the dual basis of $V_{\chi}^{I}\cong\C\otimes_{\mcR,\chi^{-1}}\mcM$ to $\{v_{w}\}_{w\in W}$ with respect to this perfect pairing, that is, each $v^{\vee}_{w}$ satisfies $(v^{\vee}_{w},v_{w'})_{\chi}=\delta_{w,w'}$.
Then, by Proposition \ref{prop:Haines}, we have
\[
(v^{\vee}_{w}\ast\iota(\Theta_{\mu}),v_{w'})_{\chi}
=(v^{\vee}_{w},v_{w'}\ast \Theta_{\mu})_{\chi}
=\begin{cases}
q^{\langle\rho_{\bfP}-\rho^{\nr},w(\mu)\rangle}\cdot\chi(R_{w(\mu)})&\text{if $w=w'$,}\\
c_{w'}&\text{if $w<w'$,}\\
0&\text{otherwise}
\end{cases}
\]
with some complex number $c_{w'}$.
By noting that $\chi(R_{w(\mu)})=\chi\circ\kappa_{M}^{-1}(w(\mu))$, we get the assertion.
\end{proof}

\begin{rem}\label{rem:q-spl}
By Remark \ref{rem:q-ur}, we simply have $q(w,\mu)=q^{\langle\rho_{\bfP},\mu\rangle}$ when $\G$ is split over $\breve{F}$.
\end{rem}

\begin{cor}\label{cor:OST4.3-Iwahori}
With the notations as in Proposition \ref{prop:triangle-Iwahori}, we have
\[
\det\bigl(1-q^{-s}\cdot I_{\chi}(\mathbbm{1}_{\mu})\,\big\vert\, V_{\chi}^{I}\bigr)
=
\prod_{w\in W} \bigl(1-q^{-s}\cdot q(w,\mu)\cdot\chi\circ\kappa_{M}^{-1}(w(\mu))\bigr).
\]
\end{cor}

\begin{proof}
For each $k\in\Z_{\geq0}$, we put 
\[
W(k):=\{w\in W \mid \ell(w)=k\}.
\]
Then obviously we have $W=W(0)\sqcup\cdots\sqcup W(h)$ for $h:=\max\{\ell(w)\mid w\in W\}$.
%We put a total order on $W$, i.e., a map 
%\[
%\{1,\ldots,\#W\}\rightarrow W\colon i\mapsto w_{i}
%\]
%such that we have $i<i'$ whenever $\ell(w_{i})<\ell(w_{i'})$.
We choose a labeling $W=\{w_{1},\ldots,w_{\#W}\}$ so that we have
\begin{align*}
W(0)&=\{w_{1},\ldots,w_{\#W(0)}\},\\
W(1)&=\{w_{\#W(0)+1},\ldots,w_{\#W(0)+\#W(1)}\},\\
&\vdots\\
W(h)&=\{w_{\#W(0)+\cdots\#W(h-1)+1},\ldots,w_{\#W}\}.
\end{align*}

We take a $\C$-basis $\{v^{\vee}_{w}\}_{w\in W}$ of $V_{\chi}^{I}$ as in Proposition \ref{prop:triangle-Iwahori} and consider a matrix representation of $I_{\chi}(\mathbbm{1}_{\mu})$ on $V_{\chi}^{I}$ with respect to the basis $\{v^{\vee}_{w_{i}}\}_{w\in W}$ ordered according to the above labeling on $W$. 
%, which is a total order.
Then, by Proposition \ref{prop:triangle-Iwahori}, we have
\[
I_{\chi}(\mathbbm{1}_{\mu})\cdot v^{\vee}_{w_{i}}
=
q(w_{i},\mu) \cdot \Bigl(
\chi\circ\kappa_{M}^{-1}(w_{i}(\mu))\cdot v^{\vee}_{w_{i}}+\sum_{\begin{subarray}{c}i'\in\{1,\ldots,\#W\}\\ w_{i'}>w_{i} \end{subarray}} c_{w_{i'}}\cdot v^{\vee}_{w_{i'}}
\Bigr).
\]
When $i'$ satisfies $w_{i'}>w_{i}$, we necessarily have $\ell(w_{i'})>\ell(w_{i})$ by the definition of the Bruhat order (see the beginning of Section \ref{subsec:triangle}).
In particular, we have $i'>i$.
This means that the action of $I_{\chi}(\mathbbm{1}_{\mu})$ on $V_{\chi}^{I}$ is triangulated with respect to the ordered basis $\{v^{\vee}_{w_{i}}\}_{i=1,\ldots,\#W}$.
As the diagonal entry corresponding to $v^{\vee}_{w_{i}}$ is given by $q(w_{i},\mu) \cdot \chi\circ\kappa_{M}^{-1}(w_{i}(\mu))$, we get the assertion.
\end{proof}

\subsection{General case}\label{subsec:Hecke-parahoric}

We next consider the general case.
Let $\mu\in\Lambda_{M}$ be a dominant element.
As explained in  Section \ref{subsec:parahoric}, $\mu$ defines the parahoric subgroup $J_{\mu}$ satisfying $I\subset J_{\mu}\subset K$. 
%Then a parahoric subgroup $J_{\mu}$ satisfying $I\subset J_{\mu}\subset K$ is associated with $\mu$ as in Section \ref{subsec:parahoric}.
Recall that we have $J_{\mu}=IW_{\mu}I$, where we regard $W_{\mu}$ as a subgroup of $W_{\bfo} \subset \tilde{W}$ by using the bijection $W_{\bfo}\xrightarrow{1:1}W$. 
%with a finite subgroup $W_{\mcF}$ of $W_{\bfo}$, which is identified with a subgroup $W_{\mu}$ of $W$.

We let $\bfM_{\mu}\supset\bfM$ denote the Levi subgroup of $\G$ determined by $\mu$, i.e., for a root $\alpha\in\Phi$, $\bfU_{\alpha}\subset\bfM_{\mu}$ if and only if $\langle\alpha,\mu\rangle=0$.
We put $\Phi_{\red}^{+}(\bfM_{\mu}):=\{\alpha\in\Phi_{\red}^{+}\mid\langle\alpha,\mu\rangle=0\}$ and define
\[
I_{N}^{M_{\mu}}:=\prod_{\alpha\in\Phi_{\red}^{+}(\bfM_{\mu})} U_{\alpha,0}.
\]

\begin{prop}\label{prop:J-reduction}
We have
\[
IW_{\mu}I
=I_{N}^{M_{\mu}}W_{\mu}I
=\bigsqcup_{w\in W_{\mu}}I_{N}^{M_{\mu}}wI.
\]
\end{prop}

\begin{proof}
Since the second equality follows from the disjointness of the Iwahori decomposition $G=\bigsqcup_{w\in\tilde{W}}IwI$ (see Section \ref{subsec:Iwahori-Hecke}), it is enough to show the first equality.

By Lemma \ref{lem:Iwahori} (1), we have $I=I_{N}I_{M}I_{\ol{N}}$, which implies $IW_{\mu}I=I_{N}I_{M}I_{\ol{N}}W_{\mu}I$.
%Let us show that this is equal to $I_{N}^{M_{\mu}}W_{\mu}I$.
%Since we have $wI_{\ol{N}}w^{-1}\subset I$ for any $w\in W$ by Lemma \ref{lem:Iwahori} (2), we have $I_{N}I_{M}I_{\ol{N}}W_{\mu}I=I_{N}I_{M}W_{\mu}I$.
 Lemma \ref{lem:Iwahori} (2) shows that $I_{N}I_{M}I_{\ol{N}}W_{\mu}I=I_{N}I_{M}W_{\mu}I$.
As $W_{\mu}$ normalizes $I_{M}$, we get $IW_{\mu}I =I_{N}W_{\mu}I$.

Since we have $I_{N}=\prod_{\alpha\in\Phi_{\red}^{+}}U_{\alpha,0}$ with any order on $\Phi_{\red}^{+}$ and $wU_{\alpha,0}w^{-1} = U_{w(\alpha), 0}$, it suffices to check that $w(\alpha) \in \Phi^{+}_{\rm red}$ for any $w\in W_{\mu}$ and any $\alpha\in\Phi_{\red}^{+}$ satisfying $\langle\alpha,\mu\rangle\neq0$.
Let $w\in W_{\mu}$.
By the definition of $W_{\mu}$, we can write $w=s_{\beta_{1}}\cdots s_{\beta_{r}}$ with $\beta_{i}\in\Phi$ such that $\langle\beta_{i},\mu\rangle=0$.
If $\alpha\in\Phi_{\red}^{+}$ is a root satisfying $\langle\alpha,\mu\rangle\neq0$, then we have  $\langle\alpha,\mu\rangle>0$ as $\mu$ is dominant.
Hence we have
\begin{align*}
\langle s_{\beta_{r}}(\alpha),\mu\rangle
&=\langle \alpha-\langle\alpha,\beta_{r}^{\vee}\rangle\beta_{r},\mu\rangle\\
&=\langle\alpha,\mu\rangle-\langle\alpha,\beta_{r}^{\vee}\rangle\langle\beta_{r},\mu\rangle
=\langle\alpha,\mu\rangle>0.
\end{align*}
Thus the dominance of $\mu$ implies that $s_{\beta_{r}}(\alpha)$ is positive.
By applying the same argument to $s_{\beta_{r}}(\alpha)$, we know that $s_{\beta_{r-1}}(s_{\beta_{r}}(\alpha))$ satisfies $\langle s_{\beta_{r-1}}(s_{\beta_{r}}(\alpha)),\mu\rangle>0$ and is positive.
Repeating this procedure, we get $w(\alpha)\in\Phi_{\red}^{+}$.
\end{proof}

Recall that an order on the quotient $W/W_{\mu}$ induced by the Bruhat order on $W$ as follows. 
Define 
\[
W^{\mu}:=\{w\in W\mid \ell(w)\le\ell(ws_{\alpha})\text{ for all $\alpha\in \Delta$ with $\langle \alpha,\mu\rangle=0$}\}.
\] 
Then it follows from \cite[Proposition~1.10~(c)]{MR1066460} that the canonical quotient $W^{\mu} \to W/W_{\mu}$ is bijective. 
Since the set $W^{\mu}$ has a partial order induced from the Bruhat order of $W$, we can transport it to $W/W_{\mu}$ via the bijection $W^{\mu}\cong W/W_{\mu}$. 

\begin{lem}[{\cite[Proposition 2.5.1]{MR2133266}}]\label{lem:order3}
The quotient map $W \twoheadrightarrow W/W_{\mu}$ preserves the orders, namely, $wW_{\mu}\leq w'W_{\mu}$ if $w\leq w'$ in $W$.
\end{lem}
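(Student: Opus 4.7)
The plan is to invoke the subword characterization of the Bruhat order on $W$ together with two structural facts about $W_{\cg}$: since $\cg$ is dominant, its stabilizer $W_{\cg}$ is the standard parabolic subgroup of $W$ generated by the simple reflections $s_{\alpha}$ with $\langle\alpha,\cg\rangle=0$; and every $v\in W$ factors uniquely as $v=\bar{v}u$ with $\bar{v}\in W^{\cg}$, $u\in W_{\cg}$, and $\ell(v)=\ell(\bar{v})+\ell(u)$.

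First I would fix reduced expressions $\bar{w}'=s_{i_1}\cdots s_{i_k}$ in $W$ and $u'=s_{j_1}\cdots s_{j_m}$ in $W_{\cg}$. Because the Coxeter length on $W_{\cg}$ coincides with the restriction of $\ell$, the second expression is also reduced in $W$; combined with the length-additivity of the factorization $w'=\bar{w}'u'$, this guarantees that the concatenation
\[
s_{i_1}\cdots s_{i_k}s_{j_1}\cdots s_{j_m}
\]
is itself a reduced expression for $w'$ in $W$.

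Next I would apply the subword characterization of the Bruhat order: the hypothesis $w\le w'$ furnishes a subword of the above expression whose product equals $w$. Separating the chosen positions at the boundary between the two blocks produces a factorization $w=w_1w_2$ in which $w_1$ is a subword of the reduced expression of $\bar{w}'$, so $w_1\le \bar{w}'$ in $W$, and $w_2$ is a subword of the reduced expression of $u'$, so $w_2\in W_{\cg}$ as a word in the generators of $W_{\cg}$. Consequently $wW_{\cg}=w_1W_{\cg}$, so the minimal representative $\bar{w}\in W^{\cg}$ of $wW_{\cg}$ equals that of $w_1W_{\cg}$; applying the subword property once more to a length-additive factorization of $w_1$ gives $\bar{w}\le w_1$, whence $\bar{w}\le w_1\le \bar{w}'$. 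By the definition of the induced order on $W/W_{\cg}$ this is precisely the desired inequality $wW_{\cg}\le w'W_{\cg}$.

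The main technical point I anticipate is the splitting of the chosen subword along the seam of the concatenation, together with the observation that subwords of a reduced expression of $u'\in W_{\cg}$ automatically remain inside $W_{\cg}$; both rely on the recognition of $W_{\cg}$ as a standard parabolic subgroup of $W$, ensuring that Coxeter length and reduced expressions in $W_{\cg}$ are compatible with those in $W$. Modulo this, the rest of the argument is a routine manipulation of reduced words.
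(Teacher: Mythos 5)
Your argument is correct, and since the paper simply cites \cite[Proposition~2.5.1]{MR2133266} without reproducing a proof, the natural comparison is with the standard proof in the literature — which is exactly the one you give: parabolic length-additive decomposition plus the subword characterization of Bruhat order. No gaps in the main line.

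One point worth making explicit, since it is where the hypothesis of dominance enters: the paper defines $W_{\cg}$ as the group generated by \emph{all} reflections $w_{\alpha}$ with $\alpha\in\Phi$ and $\langle\alpha,\cg\rangle=0$, not a priori just the simple ones. To invoke the parabolic-subgroup machinery (compatibility of lengths, reduced words, and minimal coset representatives $W^{\cg}$) you need $W_{\cg}$ to be the \emph{standard} parabolic $W_{I}$ with $I=\{\alpha\in\Delta_{\mathbf{B}}\mid\langle\alpha,\cg\rangle=0\}$. This follows because $W_{I}\subseteq W_{\cg}\subseteq\Stab_{W}(\cg)$ and, by Chevalley's lemma (the stabilizer in $W$ of a point of the closed fundamental chamber is generated by the simple reflections fixing it), $\Stab_{W}(\cg)=W_{I}$ when $\cg$ is dominant; so all three coincide. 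You gesture at this ("since $\cg$ is dominant, its stabilizer $W_{\cg}$ is the standard parabolic"), which is the right idea — just be aware this identification of $W_{\cg}$ with $\Stab_W(\cg)$ and with $W_I$ is itself a small lemma rather than a definition. With that in place, your splitting of the chosen subword at the seam, the conclusions $w_1\le\bar w'$ and $w_2\in W_{\cg}$, and the chain $\bar w\le w_1\le\bar w'$ all go through exactly as you describe.
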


\begin{rem}\label{rem:Weyl}
For any $w\in W$ and $\alpha\in\Delta$, we have $\ell(w)\leq\ell(ws_{\alpha})$ if and only if $w(\alpha)\in\Phi^{+}$ by Lemma \ref{lem:order}.
Thus we have
\[
W^{\mu}=\{w\in W\mid w(\alpha)\in\Phi^{+}\text{ for all $\alpha\in \Delta$ with $\langle \alpha,\mu\rangle=0$}\}.
\]
Since $\mu\in\Lambda_M$ is dominant, any positive root $\alpha$ satisfying $\langle\alpha,\mu\rangle=0$ can be written as the sum of simple roots $\alpha_{i}$'s satisfying $\langle\alpha_{i},\mu\rangle=0$ with non-negative integer coefficients. 
Hence $W^{\mu}$ furthermore equals
\[
\{w\in W\mid w(\alpha)\in\Phi^{+}\text{ for all $\alpha\in \Phi^{+}$ with $\langle \alpha,\mu\rangle=0$}\}.
\]
\end{rem}

We let $e_{J_{\mu}}\in\mcH_{I}$ denote the idempotent corresponding to $J_{\mu}$, which is given explicitly by $dg(J_{\mu})^{-1}\mathbbm{1}_{J_{\mu}}$.
We put
\[
\mathbbm{1}_{\mu}=dg(J_{\mu})^{-1}\mathbbm{1}_{J_{\mu}\ul{\mu}J_{\mu}}.
\]

\begin{lem}\label{lem:proj}
We have a decomposition
\[
J_{\mu}/I = \bigsqcup_{w\in W_{\mu}}I_{N}^{M_{\mu}}wI/I.
\]
Moreover, for each $w\in W_{\mu}$, we have a bijection
\[
I_{N}^{M_{\mu}}/I_{N}^{M_{\mu}}[w]
\xrightarrow{\cong}
I_{N}^{M_{\mu}}wI/I
\colon x\mapsto xwI,
\]
where
\[
I_{N}^{M_{\mu}}[w]
:=\prod_{\alpha\in\Phi_{\red}^{+}(\bfM_{\mu})} U_{\alpha,r_{\alpha}},\quad
r_{\alpha}
:=
\begin{cases}
0& w^{-1}(\alpha)>0,\\
0+& w^{-1}(\alpha)<0.
\end{cases}
\]
\end{lem}

\begin{proof}
The first statement is an immediate consequence of Proposition \ref{prop:J-reduction}.

To show the second statement, let us take two elements $x,y\in I_{N}^{M_{\mu}}$ such that $xwI=ywI$.
Then we have $y^{-1}x\in wIw^{-1}$, hence $y^{-1}x\in I_{N}^{M_{\mu}}\cap wIw^{-1}$.
By a similar argument to the proof of Lemma \ref{lem:HKP-pre}, we can check that
\[
I_{N}^{M_{\mu}}\cap wIw^{-1}
=
\prod_{\alpha\in\Phi_{\red}^{+}(\bfM_{\mu})} U_{\alpha,r_{\alpha}},
\]
where $r_{\alpha}$ is as in the statement.
\end{proof}

\begin{prop}\label{prop:proj}
We have
\[
\mathbbm{1}_{\mu}
=e_{J_{\mu}}\ast T_{\mu} \ast e_{J_{\mu}},
\]
\end{prop}

\begin{proof}
Recall that $T_{\mu}=\mathbbm{1}_{I\ul{\mu}I}$.
Thus our task is to show that
\[
dg(J_{\mu})\cdot\mathbbm{1}_{J_{\mu}\ul{\mu}J_{\mu}}
=\mathbbm{1}_{J_{\mu}}\ast \mathbbm{1}_{I\ul{\mu}I} \ast \mathbbm{1}_{J_{\mu}}.
\]
Let us compute $\mathbbm{1}_{J_{\mu}}\ast \mathbbm{1}_{I\ul{\mu}I} \ast \mathbbm{1}_{J_{\mu}}$.
In general, for any $f_{1},f_{2},f_{3}\in C_{c}^{\infty}(G)$, we have
\begin{align*}
f_{1}\ast f_{2}\ast f_{3}(x)
&=\int_{G}f_{1}(g) (f_{2}\ast f_{3})(g^{-1}x)\,dg\\
&=\int_{G}f_{1}(g) \biggl(\int_{G} f_{2}(h) f_{3}(h^{-1}g^{-1}x)\,dh\biggr)\,dg\\
&=\int_{G}f_{1}(g) \biggl(\int_{G} f_{2}(g^{-1}xh) f_{3}(h^{-1})\,dh\biggr)\,dg.
\end{align*}
(In the last equality, we replaced $h$ with $g^{-1}xh$ by noting that $dh$ is a Haar measure on $G$.)
Hence we have
\begin{align*}
\mathbbm{1}_{J_{\mu}}\ast \mathbbm{1}_{I\ul{\mu}I} \ast \mathbbm{1}_{J_{\mu}}(x)
&=\int_{G}\mathbbm{1}_{J_{\mu}}(g) \biggl(\int_{G} \mathbbm{1}_{I\ul{\mu}I}(g^{-1}xh) \mathbbm{1}_{J_{\mu}}(h^{-1})\,dh\biggr)\,dg\\
&=\int_{J_{\mu}}\int_{J_{\mu}}\mathbbm{1}_{I\ul{\mu}I}(g^{-1}xh)\,dg\,dh.
\end{align*}
The integrand of the right-hand side is not zero if and only if $x$ belongs to $J_{\mu}\ul{\mu}J_{\mu}$.
Furthermore, we see that $\mathbbm{1}_{J_{\mu}}\ast \mathbbm{1}_{I\ul{\mu}I} \ast \mathbbm{1}_{J_{\mu}}(x)$ is constant for any $x\in J_{\mu}\ul{\mu}J_{\mu}$ again by noting that $dg$ and $dh$ are Haar measures on $G$ (hence of $J_{\mu}$).

Thus now it is enough to check that $\mathbbm{1}_{J_{\mu}}\ast \mathbbm{1}_{I\ul{\mu}I} \ast \mathbbm{1}_{J_{\mu}}(\ul{\mu})$ is given by $dg(J_{\mu})$.
By Lemma \ref{lem:proj}, we have 
\[
J_{\mu}/I = \bigsqcup_{w\in W_{\mu}}I_{N}^{M_{\mu}}wI/I,
\]
and, for each $w\in W_{\mu}$, we have a bijection
\[
I_{N}^{M_{\mu}}/I_{N}^{M_{\mu}}[w]
\xrightarrow{\cong}
I_{N}^{M_{\mu}}wI/I
\colon x\mapsto xwI.
\]
Thus we can take a complete set of representatives $\{g_{i}\}_{i=1}^{\#J_{\mu}/I}$ of the quotient $J_{\mu}/I$ so that each $g_{i}$ is given by $x_{i}w$ with some $x_{i}\in I_{N}^{M_{\mu}}$.
We note that $\ul{\mu}$-conjugation preserves $I_{N}^{M_{\mu}}$ and $I_{N}^{M_{\mu}}[w]$.
This fact follows from that $\langle\alpha,\mu\rangle=0$ for any $\alpha\in\Phi^{+}_{\red}(\bfM_{\mu})$ by a similar argument to the proof of Lemma \ref{lem:Iwahori} (3).
Hence, for each $x_{i}\in I_{N}^{M_{\mu}}$, there exists a unique $x_{i'}\in I_{N}^{M_{\mu}}$ satisfying 
\begin{align}\label{eq:J-I-1}
\ul{\mu}x_{i}\ul{\mu}^{-1}I_{N}^{M_{\mu}}[w]=x_{i'}I_{N}^{M_{\mu}}[w].
\end{align}
On the other hand, as $w$ commutes with $\mu\in\Lambda_{M}$ (as elements of $\tilde{W}$), we have 
\begin{align}\label{eq:J-I-2}
\ul{\mu}w\ul{\mu}^{-1}I_{M}=wI_{M}.
\end{align}
By combining equalities (\ref{eq:J-I-1}) and (\ref{eq:J-I-2}), we can check that
\[
\ul{\mu}x_{i}w\ul{\mu}^{-1}I=x_{i'}wI,
\]
or equivalently, $\ul{\mu}g_{i}\ul{\mu}^{-1}I=g_{i'}I$.
By taking the inverse, we get $I\ul{\mu}g_{i}^{-1}=Ig_{i'}^{-1}\ul{\mu}$.
This implies that, for any $g\in g_{i'}I$ and $h\in g_{j}I$, we have
\[
\mathbbm{1}_{I\ul{\mu}I}(g^{-1}\ul{\mu}h)
=\mathbbm{1}_{I\ul{\mu}I}(g_{i'}^{-1}\ul{\mu}g_{j})
=\mathbbm{1}_{I\ul{\mu}I}(\ul{\mu}g_{i}^{-1}g_{j}).
\]
Thus, by noting that the association $[g_{i}\mapsto g_{i'}]$ gives a bijection from $\{g_{i}\}_{i}$ to itself, we get
\[
\mathbbm{1}_{J_{\mu}}\ast \mathbbm{1}_{I\ul{\mu}I} \ast \mathbbm{1}_{J_{\mu}}(\ul{\mu})
=\int_{J_{\mu}}\int_{J_{\mu}}\mathbbm{1}_{I\ul{\mu}I}(g^{-1}\ul{\mu}h)\,dg\,dh
=\sum_{i=1}^{\#J_{\mu}/I}\sum_{j=1}^{\#J_{\mu}/I}\mathbbm{1}_{I\ul{\mu}I}(\ul{\mu}g_{i}^{-1}g_{j}).
\]
Now our task is to show that $\mathbbm{1}_{I\ul{\mu}I}(\ul{\mu}g_{i}^{-1}g_{j})\neq0$ if and only if $i=j$.

The ``if'' part is obviously true, so let us consider the ``only if'' part.
We suppose that $\mathbbm{1}_{I\ul{\mu}I}(\ul{\mu}g_{i}^{-1}g_{j})\neq0$, namely, $g_{i}^{-1}g_{j}\in{\ul{\mu}}^{-1}I\ul{\mu}I$.
Let $N_{\mu}$ be the unipotent radical of the standard parabolic subgroup with Levi subgroup $M_{\mu}$, and let $\overline{N}_{\mu}$ be its opposite.
If we put
\[
I_{\overline{N}_{\mu}}:=I\cap\overline{N}_{\mu},\quad
I_{M_{\mu}}:=I\cap M_{\mu},\quad
I_{N_{\mu}}:=I\cap N_{\mu},
\]
then we have $I=I_{N_{\mu}}I_{M_{\mu}}I_{\overline{N}_{\mu}}$ and $I=I_{\overline{N}_{\mu}}I_{M_{\mu}}I_{N_{\mu}}$(Lemma \ref{lem:Iwahori} (1)).

By Lemma \ref{lem:Iwahori} (3), we have ${\ul{\mu}}^{-1}I_{\overline{N}_{\mu}}\ul{\mu}\subset I_{\overline{N}_{\mu}}$ and ${\ul{\mu}}^{-1}I_{N_{\mu}}\ul{\mu}\supset I_{N_{\mu}}$ by the dominance of $\mu$.
Moreover, by a similar argument to the proof of Lemma \ref{lem:Iwahori} (3), we can show that ${\ul{\mu}}^{-1}U_{\alpha,r}{\ul{\mu}}=U_{\alpha,r}$ for any $r\in\R$ and any $\alpha$ whose root subgroup $U_{\alpha}$ is contained in $\bfM_{\mu}$.
Accordingly, we have ${\ul{\mu}}^{-1}I_{M_{\mu}}\ul{\mu}=I_{M_{\mu}}$.
Thus we have
\begin{align*}
{\ul{\mu}}^{-1}I\ul{\mu}I
&={\ul{\mu}}^{-1}(I_{N_{\mu}}I_{M_{\mu}}I_{\overline{N}_{\mu}})\ul{\mu}I\\
&={\ul{\mu}}^{-1}I_{N_{\mu}}\ul{\mu}I\\
&={\ul{\mu}}^{-1}I_{N_{\mu}}\ul{\mu}(I_{N_{\mu}}I_{M_{\mu}}I_{\overline{N}_{\mu}})\\
&={\ul{\mu}}^{-1}I_{N_{\mu}}\ul{\mu}I_{M_{\mu}}I_{\overline{N}_{\mu}}.
\end{align*}
Recall that the multiplication map
\[
N_{\mu}\times M_{\mu}\times \overline{N}_{\mu} \rightarrow G
\]
is injective (see \cite[Th\'eor\`eme 2.2.3]{MR756316}) and note that ${\ul{\mu}}^{-1}I_{N_{\mu}}\ul{\mu}$, $I_{M_{\mu}}$, and $I_{\overline{N}_{\mu}}$ are contained in $N_{\mu}$, $M_{\mu}$, and $\overline{N}_{\mu}$, respectively.
Hence, as $g_{i}^{-1}g_{j}$ lies in $M_{\mu}$, the assumption that $g_{i}^{-1}g_{j}\in{\ul{\mu}}^{-1}I\ul{\mu}I$ implies that $g_{i}^{-1}g_{j}$ belongs to $I_{M_{\mu}}$.
This means that $g_{i}I$ and $g_{j}I$ are the same $I$-coset, thus we have $g_{i}=g_{j}$.
\end{proof}

\begin{lem}\label{lem:parah-Iwah}
For any $w\in W$, we have
\[
M_{1}NwJ_{\mu}
=\bigsqcup_{w'\in W_{\mu}}M_{1}Nww'I.
\]
\end{lem}

\begin{proof}
Since $W^{\mu}\xrightarrow{1:1}W/W_{\mu}$ and $J_{\mu}$ contains $W_{\mu}$, it is enough to treat only the case where $w\in W^{\mu}$. 

By Proposition \ref{prop:J-reduction}, we have $J_{\mu}=\bigsqcup_{w'\in W_{\mu}}I_{N}^{M_{\mu}}w'I$.
Hence we have
\[
M_{1}NwJ_{\mu}
=\bigcup_{w'\in W_{\mu}}M_{1}NwI_{N}^{M_{\mu}}w'I.
\]
By the definition of $W^{\mu}$ and Remark \ref{rem:Weyl}, we have $w(\alpha)\in\Phi^{+}$ for  any $\alpha\in\Phi^{+}$ satisfying $\langle\alpha,\mu\rangle=0$. 
This fact shows that $wI_{N}^{M_{\mu}}w^{-1}\subset N$, and hence we get 
%Let us check that $wI_{N}^{M_{\mu}}w^{-1}\subset N$.
%For this, it suffices to show that any $\alpha\in\Phi^{+}$ appearing in $\bfM_{\mu}$ (equivalently, satisfying $\langle\alpha,\mu\rangle=0$) satisfies $w(\alpha)\in\Phi^{+}(\bfG)$.
%This follows from the definition of $W^{\mu}$ (see Remark \ref{rem:Weyl}). 
\[
M_{1}NwJ_{\mu}
=\bigcup_{w'\in W_{\mu}}M_{1}Nww'I.
\]
Since the decomposition $G=\bigcup_{w'\in \tilde{W}}M_{1}Nw'I$ is disjoint (see Section \ref{subsec:univ}), this decomposition is disjoint.
\end{proof}

For $w\in W$, we put $v_{w}^{J_{\mu}}:=\sum_{w'\in W_{\mu}}v_{ww'}$.

\begin{lem}\label{lem:averaging}
For any $w\in W$, we have 
\[
v_{w}\ast e_{J_{\mu}}
=
\#W_{\mu}^{-1}\cdot v_{w}^{J_{\mu}}.
\]
\end{lem}

\begin{proof}
Recall that $v_{w}=\mathbbm{1}_{M_{1}NwI}$ and $e_{J_{\mu}}=dg(J_{\mu})^{-1}\cdot\mathbbm{1}_{J_{\mu}}$.
We have
\[
\mathbbm{1}_{M_{1}NwI}\ast\mathbbm{1}_{J_{\mu}}(x)
=\int_{G}\mathbbm{1}_{M_{1}NwI}(g)\mathbbm{1}_{J_{\mu}}(g^{-1}x)\,dg
=dg(M_{1}NwI\cap xJ_{\mu}).
\]
Thus, since $I \subset J_{\mu}$, we have $\mathrm{Supp}(\mathbbm{1}_{M_{1}NwI}\ast\mathbbm{1}_{J_{\mu}}) = M_{1}NwJ_{\mu}$. 
%support of the function $\mathbbm{1}_{M_{1}NwI}\ast\mathbbm{1}_{J_{\mu}}$ is given by the subset consisting of the elements $x\in G$ such that the intersection $M_{1}NwI\cap xJ_{\mu}$ is nonempty, which is nothing but $M_{1}NwJ_{\mu}$.
Suppose that $x\in M_{1}NwJ_{\mu}$ and write $x=mnwj$ with $m\in M_{1}$, $n\in N$, and $j\in J_{\mu}$.
Then, by noting that $dg$ is a Haar measure on $G$ and that $M$ normalizes $N$, we have
\[
dg(M_{1}NwI\cap xJ_{\mu})
=dg(M_{1}NwI\cap mnwJ_{\mu})
=dg(M_{1}NwI\cap wJ_{\mu}).
\]
%This implies that $\mathbbm{1}_{M_{1}NwI}\ast\mathbbm{1}_{J_{\mu}}(x)$ is independent of the choice of $x\in M_{1}NwJ_{\mu}$.
This fact implies that  $\mathbbm{1}_{M_{1}NwI}\ast\mathbbm{1}_{J_{\mu}}$ is equal to constant multiple of $\mathbbm{1}_{M_{1}NwJ_{\mu}}$.
Since we have 
\[
M_{1}NwJ_{\mu}
=\bigsqcup_{w'\in W_{\mu}}M_{1}Nww'I
\]
by Lemma \ref{lem:parah-Iwah}, there is a constant $C\in\C$ such that $v_{w}\ast e_{J_{\mu}}=C\cdot v_{w}^{J_{\mu}}$.
Since $e_{J_{\mu}}$ is an idempotent, we have
\[
v_{w}^{J_{\mu}}\ast e_{J_{\mu}}
=(C^{-1}\cdot v_{w}\ast e_{J_{\mu}})\ast e_{J_{\mu}}
=C^{-1}\cdot v_{w}\ast e_{J_{\mu}}
=v_{w}^{J_{\mu}}.
\]
On the other hand, we have
\[
v_{w}^{J_{\mu}}\ast e_{J_{\mu}}
=\Bigl(\sum_{w'\in W_{\mu}}v_{ww'}\Bigr)\ast e_{J_{\mu}}
=\sum_{w'\in W_{\mu}}C\cdot v_{w}^{J_{\mu}}
=\#W_{\mu}\cdot C\cdot v_{w}^{J_{\mu}}.
\]
Thus we get $C=\#W_{\mu}^{-1}$.
\end{proof}

Since $(-)\ast e_{J_{\mu}}$ gives a projector from $\mcM$ onto $\mcM^{J_{\mu}}$, Lemma \ref{lem:averaging} implies that $\{v_{w}^{J_{\mu}}\}_{w\in W/W_{\mu}}$ forms an $\mcR$-basis of $\mcM^{J_{\mu}}$.

\begin{prop}\label{prop:Haines-parahoric}
For any $w\in W/W_{\mu}$, there exists a family $\{a_{w'}\}_{w'\in W/W_{\mu}, w'<w}$ of elements of $\mcR$ satisfying
\[
v^{J_{\mu}}_{w}\ast(e_{J_{\mu}}\ast\Theta_{\mu}\ast e_{J_{\mu}})
=
q^{\langle\rho_{\bfP}-\rho^{\nr},w(\mu)\rangle}\cdot R_{w(\mu)}\cdot v^{J_{\mu}}_{w}+\sum_{\begin{subarray}{c}w'\in W/W_{\mu} \\ w'<w \end{subarray}} a_{w'}\cdot v^{J_{\mu}}_{w'}.
\]
\end{prop}

\begin{proof}
We have
\[
v^{J_{\mu}}_{w}\ast(e_{J_{\mu}}\ast\Theta_{\mu})
=v^{J_{\mu}}_{w}\ast\Theta_{\mu}
=\sum_{w'\in W_{\mu}}v_{ww'}\ast\Theta_{\mu}.
\]
By applying Proposition \ref{prop:Haines} to each $v_{ww'}\ast\Theta_{\mu}$, we have
\[
\sum_{w'\in W_{\mu}}v_{ww'}\ast\Theta_{\mu}
=
\sum_{w'\in W_{\mu}}\Bigl(q^{\langle\rho_{\bfP}-\rho^{\nr},ww'(\mu)\rangle}\cdot R_{ww'(\mu)}\cdot v_{ww'}+\sum_{\begin{subarray}{c}w''\in W \\ w''<ww' \end{subarray}} a^{(w')}_{w''}\cdot v_{w''} \Bigr),
\]
where $a^{(w')}_{w''}\in\mcR$ is an element determined by $w'$ and $w''$.
By noting that $ww'(\mu)=w(\mu)$ for any $w'\in W_{\mu}$, we get
\begin{align*}
\sum_{w'\in W_{\mu}}q^{\langle\rho_{\bfP}-\rho^{\nr},ww'(\mu)\rangle}\cdot R_{ww'(\mu)}\cdot v_{ww'}
&= q^{\langle\rho_{\bfP}-\rho^{\nr},w(\mu)\rangle}\cdot R_{w(\mu)}\cdot\sum_{w'\in W_{\mu}} v_{ww'}\\
&= q^{\langle\rho_{\bfP}-\rho^{\nr},w(\mu)\rangle}\cdot R_{w(\mu)}\cdot v_{w}^{J_{\mu}}.
\end{align*}
On the other hand, by Lemma \ref{lem:averaging}, we have
\[
\Bigl(\sum_{w'\in W_{\mu}}\sum_{\begin{subarray}{c}w''\in W \\ w''<ww' \end{subarray}} a^{(w')}_{w''}\cdot v_{w''} \Bigr)\ast e_{J_{\mu}}
=
\#W_{\mu}^{-1}\cdot\sum_{w'\in W_{\mu}}\sum_{\begin{subarray}{c}w''\in W \\ w''<ww' \end{subarray}} a^{(w')}_{w''}\cdot v_{w''}^{J_{\mu}}.
\]
By Lemma \ref{lem:order3}, for any $w'\in W_{\mu}$ and $w''\in W$ satisfying $w''<ww'$, we have $w''W_{\mu}<wW_{\mu}$ .
This implies that we have
\[
\#W_{\mu}^{-1}\cdot\sum_{w'\in W_{\mu}}\sum_{\begin{subarray}{c}w''\in W \\ w''<ww' \end{subarray}} a^{(w')}_{w''}\cdot v_{w''}^{J_{\mu}}
=
\sum_{\begin{subarray}{c}w'\in W/W_{\mu} \\ w'<w\end{subarray}} a'_{w'}\ast v_{w'}^{J_{\mu}}
\]
by choosing $a'_{w'}$ for each $w'\in W/W_{\mu}$ satisfying $w'<w$ appropriately.
\end{proof}

\begin{prop}\label{prop:triangle-parahoric}
There exists a $\C$-basis $\{v^{J_{\mu},\vee}_{w}\}_{w\in W/W_{\mu}}$ of $V_{\chi}^{J_{\mu}}$ such that, for any $w\in W/W_{\mu}$, there exists a family $\{c_{w'}\}_{w'\in W/W_{\mu}, w'>w}$ of complex numbers satisfying
\[
I_{\chi}(\mathbbm{1}_{\mu})\cdot v^{J_{\mu},\vee}_{w}
=
q(w,\mu) \cdot \Bigl(
\chi\circ\kappa_{M}^{-1}(w(\mu))\cdot v^{J_{\mu},\vee}_{w}+\sum_{\begin{subarray}{c}w'\in W/W_{\mu} \\ w'>w \end{subarray}} c_{w'}\cdot v^{J_{\mu},\vee}_{w'}
\Bigr).
\]
\end{prop}

\begin{proof}
Note that, by Proposition \ref{prop:proj}, we have $\mathbbm{1}_{\mu}=e_{J_{\mu}}\ast T_{\mu} \ast e_{J_{\mu}}$.
Thus, by Remark \ref{rem:left-right}, the left action $I_{\chi}(\mathbbm{1}_{\mu})$ on $V_{\chi}^{J_{\mu}}$ coincides with the right action of 
$\iota(e_{J_{\mu}}\ast T_{\mu}\ast e_{J_{\mu}}) \in \mcH_{I}$ on $\C\otimes_{\mcR,\chi^{-1}}\mcM^{J_{\mu}}$.
By Lemma \ref{cor:Theta-H}, we have
\[
\iota(e_{J_{\mu}}\ast T_{\mu}\ast e_{J_{\mu}})
=q^{\langle\rho^{\nr},\mu\rangle} \cdot \iota(e_{J_{\mu}}\ast\Theta_{\mu}\ast e_{J_{\mu}}).
\]

Since the perfect pairing $(-,-)$ introduced in the proof of Proposition \ref{prop:triangle-Iwahori} is anti-invariant with respect to the action of the Iwahori--Hecke algebra (the property (B) in the proof of Proposition \ref{prop:triangle-Iwahori}), it canonically induces a perfect pairing
\[
(-,-)_{\chi} \colon V_{\chi}^{J_{\mu}} \times V_{\chi^{-1}}^{J_{\mu}} \longrightarrow \C. 
\]
%between the subspaces $V_{\chi}^{J_{\mu}}$ of $V_{\chi}^{I}$ and $V_{\chi^{-1}}^{J_{\mu}}$ of $V_{\chi^{-1}}^{I}$.
Thus, by choosing a $\C$-basis of $\{v^{J_{\mu},\vee}_{w}\}_{w\in W/W_{\mu}}$ of $V_{\chi}^{J_{\mu}}$ to be the dual to $\{v^{J_{\mu}}_{w}\}_{w\in W/W_{\mu}}$ with respect to this pairing, the same argument as in the proof of Proposition \ref{prop:triangle-Iwahori} works using Proposition \ref{prop:Haines-parahoric} instead of Proposition \ref{prop:Haines}.
\end{proof}

With notations as in Proposition \ref{prop:triangle-parahoric}, we introduce a diagonalizable operator $A_{\mu}$ on $V_{\chi}^{J_{\mu}}$ given by $A_{\mu}(v_{w}^{J_{\mu},\vee})=q(w,\mu)^{-1}\cdot v_{w}^{J_{\mu},\vee}$.

\begin{cor}\label{cor:OST4.3}
 We have
\[
\det\bigl(1-q^{-s}\cdot c\cdot A_{\mu}\circ I_{\chi}(\mathbbm{1}_{\mu})\,\big\vert\, V_{\chi}^{J_{\mu}}\bigr)
=
\prod_{w\in W/W_{\mu}} \bigl(1-q^{-s}\cdot c\cdot\chi\circ\kappa_{M}^{-1}(w(\mu))\bigr)
\]
for any $c\in\C$.
\end{cor}

\begin{proof}
Recall that there exists a complete set $W^{\mu}$ of representatives of the quotient $W/W_{\mu}$ and that the order on $W/W_{\mu}$ is nothing but the order transported from the Bruhat order on $W^{\mu}\subset W$.
By noting this, we can carry out the same argument as in the proof of Corollary \ref{cor:OST4.3-Iwahori}.
To be more precise, we put 
\[
W^{\mu}(k):=\{w\in W^{\mu} \mid \ell(w)=k\}
\]
for each $k\in\Z_{\geq0}$ and define a total order on $W^{\mu}$ such that 
\begin{align*}
W^{\mu}(0)&=\{w_{1},\ldots,w_{\#W^{\mu}(0)}\},\\
W^{\mu}(1)&=\{w_{\#W^{\mu}(0)+1},\ldots,w_{\#W^{\mu}(0)+\#W^{\mu}(1)}\},\\
&\vdots\\
W^{\mu}(h)&=\{w_{\#W^{\mu}(0)+\cdots+\#W^{\mu}(h-1)+1},\ldots,w_{\#W^{\mu}}\}.
\end{align*}
Then, if we order the $\C$-basis $\{v^{J_{\mu},\vee}_{w}\}_{w\in W/W_{\mu}}$ of $V_{\chi}^{J_{\mu}}$ as in Proposition \ref{prop:triangle-parahoric} according to this total order, Proposition \ref{prop:triangle-parahoric} shows that the action of $I_{\chi}(\mathbbm{1}_{\mu})$ on $V_{\chi}^{J_{\mu}}$ is triangulated with respect to the ordered basis $\{v^{J_{\mu},\vee}_{w_{i}}\}_{i=1,\ldots,\#W^{\mu}}$.
As the diagonal entry corresponding to $v^{J_{\mu},\vee}_{w_{i}}$ is given by $q(w_{i},\mu) \cdot \chi\circ\kappa_{M}^{-1}(w_{i}(\mu))$, we get the assertion.
\end{proof}

\section{Relation to the local $L$-functions}\label{sec:L}

\subsection{Representations with parahoric fixed vectors}\label{subsec:parahoric-fixed}

We recall basic a fact about irreducible smooth representations of $G$ having a non-zero fixed vector by a parahoric subgroup following \cite{MR3444233}.

Let $J\subset G$ be a parahoric subgroup of $G$.
\begin{defn}[$J$-spherical representation]\label{defn:spherical}
We say that an irreducible smooth representation $\pi$ of $G$ is \textit{$J$-spherical} if $\pi$ has a nonzero vector fixed by $J$.
\end{defn}

In the following, we assume that $J$ contains the fixed Iwahori subgroup $I$.
Note that then any $J$-spherical representation is $I$-spherical.
We also remark that this assumption is always satisfied up to conjugacy since
\begin{itemize}
\item
any parahoric subgroup contains an Iwahori subgroup, and
\item
any Iwahori subgroups are conjugate.
\end{itemize}

\begin{prop}[{\cite[Section 11.5]{MR3444233}}]\label{prop:cusp-supp}
Let $\pi$ be an $I$-spherical irreducible smooth representation of $G$.
Then there exists an weakly unramified character $\chi\in X^{\w}(M)$ of M such that $\pi$ is a subquotient of the normalized parabolic induction $\nInd_{P}^{G}\chi$.
Moreover, such an weakly unramified character $\chi$ is unique up to the action of the Weyl group $W=W(\G,\bfA)$.
\end{prop}

\begin{rem}\label{rem:unram-rep}
When $\G$ is unramified (i.e., quasi-split and splits over an unramified extension of $F$) and $J$ is a hyperspecial maximal open compact subgroup of $G$, the above result is nothing but the well-known classification of unramified representations via the Satake isomorphism (e.g., see \cite[Section~4]{MR546593} for the details).
\end{rem}

\subsection{Satake parameters of parahoric-spherical representations}\label{subsec:Satake}

We review the construction of the Satake parameters of parahoric-spherical representations according to Haines \cite{MR3455870,MR3671514}.

\subsubsection{Quasi-split case}\label{subsubsec:q-spl}
We first consider the case where $\G$ is quasi-split (see \cite[Sections 6 and 7]{MR3455870} for the details of the content of this section).
In this case, the centralizer $\bfM$ of the maximal $F$-split torus $\bfA$ in $\G$ is a maximal torus, so we write $\bfT$ for $\bfM$.
As the minimal parabolic subgroup $\bfP$ is Borel, let us write $\bfB$ for $\bfP$.
From the tuple $(\G,\bfB,\bfT)$, we get the corresponding root datum
\[
\Psi(\G)=\bigl(X^{\ast}(\bfT), \Delta_{\bfB}, X_{\ast}(\bfT), \Delta_{\bfB}^{\vee}\bigr),
\]
where $\Delta_{\bfB}$ (resp.\ $\Delta_{\bfB}^{\vee}$) is the set of simple roots (resp.\ coroots) of $\bfT$ determined by $\bfB$.
By taking the dual of this root datum, we get the Langlands dual group $\hat{\G}$ of $\G$. 
To be more precise, $\hat{\G}$ is a connected reductive group over $\C$ with the following fixed data: 
\begin{itemize}
\item a maximal torus $\mcT$ of $\hat{\G}$, 
\item  a Borel subgroup  $\mcB$ of $\hat{\G}$ containing $\mcT$, 
\item an isomorphism $\bm{\iota}$ between the root datum $\Psi(\hat{\G})=(X^{\ast}(\mcT), \Delta_{\mcB}, X_{\ast}(\mcT), \Delta_{\mcB}^{\vee})$ of $\hat{\G}$ and the dual root datum $\Psi(\G)^{\vee}=(X_{\ast}(\bfT), \Delta_{\bfB}^{\vee}, X^{\ast}(\bfT), \Delta_{\bfB})$ of $\G$.
%\item an isomorphism $\bm{\iota}=(\iota^{\ast},\iota,\iota_{\ast},\iota^{\vee})$ between the corresponding root datum of $\hat{\G}$ and the dual root datum of $\G$: 
%\[
%\Psi(\hat{\G})
%=
%\bigl(X^{\ast}(\mcT), \Delta_{\mcB}, X_{\ast}(\mcT), \Delta_{\mcB}^{\vee}\bigr)
%\xrightarrow{\bm{\iota}}
%\bigl(X_{\ast}(\bfT), \Delta_{\bfB}^{\vee}, X^{\ast}(\bfT), \Delta_{\bfB}\bigr)
%=
%\Psi(\G)^{\vee}.
%\]
\end{itemize}

Recall that the Kottwitz homomorphism gives an isomorphism
\[
\kappa_{T}\colon T/T_{1}\xrightarrow{\cong}X^{\ast}(\hat{\bfT}^{I_{F}})^{\Frob}
\]
(see Section \ref{subsec:Iwahori}, note that now we have $Z(\hat{\bfT})=\hat{\bfT}$).
This induces an isomorphism
\[
X^{w}(M)=\Hom(T/T_{1},\C^{\times})\cong (\hat{\bfT}^{I_{F}})_{\Frob}\colon\chi\mapsto\hat{\chi},
\]
which is characterized by the identity
\[
\chi(\kappa_{T}^{-1}(\lambda))
=\lambda(\hat{\chi})
\]
for any $\lambda\in X^{\ast}(\hat{\bfT}^{I_{F}})^{\Frob}$.

We consider a map
\[
\hat{\bfT}^{I_{F}}\hookrightarrow(\hat{\G}^{I_{F}}\rtimes\Frob)_{\mathrm{ss}}\colon t\mapsto t\rtimes\Frob,
\]
where $(\hat{\G}^{I_{F}}\rtimes\Frob)_{\mathrm{ss}}$ denotes the semisimple locus in $\hat{\G}^{I_{F}}\rtimes\Frob$.
Here $\hat{\bfT}$ is regarded as a subgroup of $\hat{\G}$ via the isomorphism $\hat{\bfT}\cong\mcT$ induced by the fixed isomorphism $\bm{\iota}$.
Then, according to \cite[Proposition 6.1]{MR3455870}, this map induces a bijection
\[
(\hat{\bfT}^{I_{F}})_{\Frob}/W\xrightarrow{\cong}(\hat{\G}^{I_{F}}\rtimes\Frob)_{\mathrm{ss}}/\hat{\G}^{I_{F}}.
\]

Let $\pi$ be an Iwahori-spherical irreducible smooth representation of $G$.
Then, by Proposition \ref{prop:cusp-supp}, an element $\chi$ of $X^{\w}(T)$ is determined by $\pi$ uniquely up to $W$-conjugation.
We define the \textit{Satake parameter $s(\pi)$} of $\pi$ to be the image of $\hat{\chi}\rtimes\Frob\in(\hat{\G}^{I_{F}}\rtimes\Frob)_{\mathrm{ss}}$ in $(\hat{\G}^{I_{F}}\rtimes\Frob)_{\mathrm{ss}}/\hat{\G}^{I_{F}}$.

\subsubsection{Non-quasi-split case}\label{subsubsec:non-q-spl}
We next consider the case where $\G$ is not quasi-split (see \cite[Sections 8 and 9]{MR3455870} for the details of the content of this section).
In this case, we take the quasi-split inner form $\G^{\ast}$ of $\G$ over $F$ with an inner twist $\psi\colon\G\rightarrow\G^{\ast}$.
We fix a maximal $F$-split torus $\bfA^{\ast}$ of $\G^{\ast}$ and put $\bfT^{\ast}$ to be the centralizer of $\bfA^{\ast}$ in $\G^{\ast}$.
We also fix a Borel subgroup $\bfB^{\ast}$ of $\G^{\ast}$ containing $\bfT^{\ast}$.
For the $F$-rational parabolic subgroup $\bfP$ of $\G$ with minimal Levi subgroup $\bfM$ of $\G$, by replacing $\psi$ if necessary, there exists a parabolic subgroup $\bfP^{\ast}$ of $\G^{\ast}$ such that $\psi(\bfP)=\bfP^{\ast}$, $\psi(\bfM)=\bfM^{\ast}$ and $\bfP^{\ast}\supset \bfB^{\ast}$.
Then we get a Galois-equivariant isomorphism
\[
\hat{\psi}\colon Z(\hat{\bfM})\xrightarrow{\cong}Z(\hat{\bfM}^{\ast}).
\]

Since the Langlands dual group $\hat{\bfM}^{\ast}$ of $\bfM^{\ast}$ is realized as a Levi subgroup of $\hat{\bfG}^{\ast}$ containing the maximal torus $\hat{\bfT}^{\ast}$, we have an inclusion $Z(\hat{\bfM}^{\ast})\hookrightarrow\hat{\bfT}^{\ast}$.
Thus we get a Galois-equivariant homomorphism $\hat{\psi}_{0}\colon Z(\hat{\bfM})\cong Z(\hat{\bfM}^{\ast})\hookrightarrow\hat{\bfT}^{\ast}$.
We define a map $\tilde{t}_{\bfA^{\ast},\bfA}$ from $(Z(\hat{\bfM})^{I_{F}})_{\Frob}$ to $(\hat{\bfT}^{\ast I_{F}})_{\Frob}$ by
\begin{align*}
\tilde{t}_{\bfA^{\ast},\bfA}\colon(Z(\hat{\bfM})^{I_{F}})_{\Frob}&\rightarrow(\hat{\bfT}^{\ast I_{F}})_{\Frob}\\
\hat{\chi}&\mapsto \delta_{B^{\ast}}^{-\frac{1}{2}}\cdot\hat{\psi}_{0}(\delta_{P}^{\frac{1}{2}}\hat{\chi}).
\end{align*}
Here, $\delta_{B^{\ast}}^{\frac{1}{2}}$ is an weakly unramified character of $T^{\ast}$, hence can be regarded as an element of $(\hat{\bfT}^{\ast I_{F}})_{\Frob}$ through the isomorphism $X^{\w}(T^{\ast})\cong (\hat{\bfT}^{\ast I_{F}})_{\Frob}$ induced from the Kottwitz homomorphism.
Similarly, $\delta_{P}^{\frac{1}{2}}$ is an weakly unramified character of $M$ and regarded as an element of $Z(\hat{\bfM}^{I_{F}})_{\Frob}$ through the isomorphism $X^{\w}(M)\cong Z(\hat{\bfM}^{I_{F}})_{\Frob}$ induced from the Kottwitz homomorphism.
The map $\tilde{t}_{\bfA^{\ast},\bfA}$ induces a map
\[
(Z(\hat{\bfM})^{I_{F}})_{\Frob}/W(\G,\bfA)\rightarrow(\hat{\bfT}^{\ast I_{F}})_{\Frob}/W(\G^{\ast},\bfA^{\ast})
\]
(see \cite[Lemma 8.1]{MR3455870}), for which we again write $\tilde{t}_{\bfA^{\ast},\bfA}$.

On the other hand, as explained in the quasi-split case, we have
\[
(\hat{\bfT}^{\ast I_{F}})_{\Frob}/W(\G^{\ast},\bfA^{\ast})\xrightarrow{\cong}(\hat{\G}^{\ast I_{F}}\rtimes\Frob)_{\mathrm{ss}}/\hat{\G}^{\ast I_{F}}.
\]
Since the Langlands dual groups $\hat{\G}$ and $\hat{\G}^{\ast}$ are isomorphic Galois-equivariantly, we have 
\[
(\hat{\G}^{\ast I_{F}}\rtimes\Frob)_{\mathrm{ss}}/\hat{\G}^{\ast I_{F}}
\cong
(\hat{\G}^{I_{F}}\rtimes\Frob)_{\mathrm{ss}}/\hat{\G}^{I_{F}}.
\]
Therefore, by putting all of these maps together, we get a map
\begin{align*}
(Z(\hat{\bfM})^{I_{F}})_{\Frob}/W(\G,\bfA)
&\rightarrow(\hat{\G}^{I_{F}}\rtimes\Frob)_{\mathrm{ss}}/\hat{\G}^{I_{F}}\\
\hat{\chi}&\mapsto \tilde{t}_{\bfA^{\ast},\bfA}(\hat{\chi})\rtimes\Frob.
\end{align*}

Let $\pi$ be an Iwahori-spherical irreducible smooth representation of $G$.
Then, by Proposition \ref{prop:cusp-supp}, an element $\chi$ of $X^{\w}(M)$ is determined by $\pi$ uniquely up to $W$-conjugation.
We define the \textit{Satake parameter $s(\pi)$} of $\pi$ to be the image of $\tilde{t}_{\bfA^{\ast},\bfA}(\hat{\chi})\rtimes\Frob\in(\hat{\G}^{I_{F}}\rtimes\Frob)_{\mathrm{ss}}$ in $(\hat{\G}^{I_{F}}\rtimes\Frob)_{\mathrm{ss}}/\hat{\G}^{I_{F}}$.

For our convenience, for any $\chi\in X^{\w}(M)$, we let $\chi^{\ast}\in X^{\w}(T^{\ast})$ denote the image of $\tilde{t}_{\bfA^{\ast},\bfA}(\hat{\chi})\in (\hat{\bfT}^{\ast I_{F}})_{\Frob}$ under the map $X^{\w}(T^{\ast})\cong (\hat{\bfT}^{\ast I_{F}})_{\Frob}$:
\[
\xymatrix{
X^{\w}(M)\ar^-{\cong}[r]\ar_-{(-)^{\ast}}[d]& (Z(\hat{\bfM})^{I_{F}})_{\Frob}\ar^-{\tilde{t}_{\bfA^{\ast},\bfA}}[d] & \chi\ar@{|->}[r]&\hat{\chi}\ar@{|->}[d]\\
X^{\w}(T^{\ast})\ar^-{\cong}[r]& (\hat{\bfT}^{\ast I_{F}})_{\Frob} &\chi^{\ast}&\tilde{t}_{\bfA^{\ast},\bfA}(\hat{\chi})\ar@{|->}[l]
}
\]

\subsection{Local $L$-functions for parahoric-spherical representations}\label{subsec:L-factor}

According to \cite[Section 2.6]{MR546608}, we take a finite-dimensional continuous representation $(r,V)$ of ${}^{L}\G$ whose restriction to $\hat{\G}$ is an algebraic homomorphism of complex Lie groups $\hat\G\rightarrow\GL_{\C}(V)$.
Note that the continuity implies that $r$ factors through the quotient $\hat{\G}\rtimes\Gal(E/F)$ for a finite Galois extension $E$ of $F$ over which $\G$ splits.

\begin{defn}\label{def:L-function}
For an $I$-spherical irreducible smooth representation $\pi$ of $G$, we define the \textit{semi-simple local $L$-function of $\pi$ with respect to $r$} by
\[
L_{\mathrm{ss}}(s,\pi,r)
:=
\det\bigl(1- q^{-s}\cdot r(s(\pi)) \,\big\vert\, V^{I_{F}}\bigr)^{-1},
\]
where $V^{I_{F}}$ denotes the subspace of $V$ consisting of $I_{F}$-fixed vectors.
\end{defn}

\begin{rem}\label{rem:ss-L}
A meaning of the semi-simple local $L$-function can be explained as follows.
If we believe the conjectural local Langlands correspondence for $\G$, we should have an $L$-parameter $\phi_{\pi}$ of $\G$ for any irreducible smooth representation $\pi$ of $G$.
Recall that an $L$-parameter of $\G$ is a homomorphism from the product $W_{F}\times\SL_{2}(\C)$ of the Weil group and $\SL_{2}(\C)$ to the $L$-group ${}^{L}\G$ satisfying several conditions (see, for example, \cite[Section 3.2]{MR2730575} or \cite[Section 4]{MR3444233} for the precise definition).
For an $L$-parameter $\phi$ of $\G$, its local $L$-function with respect to $r$ is defined by
\[
L(s,\phi,r)
:=
\det\bigl(1- q^{-s}\cdot r(\phi(\Frob)) \,\big\vert\, V^{\phi(I_{F})}\bigr)^{-1}.
\]
On the other hand, according to \cite[Section 5.1]{MR3444233}, for an $L$-parameter $\phi$ of $\G$, its \textit{infinitesimal character} $\phi_{\mathrm{ss}}\colon W_{F}\rightarrow{}^{L}\G$ of $\phi$ is defined by $\phi_{\mathrm{ss}}:=\phi\circ\eta$, where
\[
\eta\colon W_{F}\rightarrow W_{F}\times\SL_{2}(\C);\quad
\sigma\mapsto \Biggl(\sigma,\begin{pmatrix}|\sigma|^{\frac{1}{2}}&0\\0&|\sigma|^{-\frac{1}{2}}\end{pmatrix}\Biggr).
\]
Here $|\sigma|$ denotes the absolute value of $\sigma\in W_{F}$ normalized so that $|\Frob|=q^{-1}$.
It is expected that any parahoric-spherical representation of $G$ corresponds to an $L$-parameter $\phi$ which is trivial on $I_{F}$ (i.e., $\phi(\sigma,1)=1\rtimes\sigma$ for any $\sigma\in I_{F}$) under the local Langlands correspondence.
Furthermore, it is expected that the Satake parameter $s(\pi)$ of a parahoric-spherical representation $\pi$ describes the image of the geometric Frobenius under the infinitesimal character $\phi_{\pi,\mathrm{ss}}$ of the $L$-parameter $\phi_{\pi}$ of $\pi$, i.e., $s(\pi)=\phi_{\pi,\mathrm{ss}}(\Frob)$ (see \cite[Conjecture 13.1]{MR3455870}).
Therefore, for any parahoric-spherical representation $\pi$ of $G$, we should have $L_{\mathrm{ss}}(s,\pi,r)=L(s,\phi_{\pi,\mathrm{ss}},r)$.
\end{rem}

\begin{rem}\label{rem:unram-L}
When $\G$ is unramified (i.e., $\G$ is quasi-split and splits over an unramified extension of $F$) and $\pi$ is an unramified representation (i.e., a $J$-spherical representation for a hyperspecial parahoric subgroup $J$ of $G$), the Satake parameter $s(\pi)$ is nothing but the classical Satake parameter of $\pi$ (see, for example, \cite{MR546593}).
In this case, the $L$-parameter $\phi_{\pi}$ of $\pi$ is defined just by
\begin{align*}
\phi_{\pi}&\colon W_{F}\times\SL_{2}(\C)\rightarrow\hat{\G}\rtimes W_{F};\quad\\
&\begin{cases}
(\Frob,1)\mapsto s(\pi), & \\
(\sigma,g)\mapsto 1\rtimes\sigma & \text{for any $(\sigma,g)\in I_{F}\times\SL_{2}(\C)$}.
\end{cases}
\end{align*}
Hence we have $\phi_{\pi,\mathrm{ss}}=\phi_{\pi}$ and $L_{\mathrm{ss}}(s,\pi,r)=L(s,\pi,r)=L(s,\phi_{\pi},r)$.
\end{rem}

We will rewrite the above definition of the semisimple local $L$-function in a different form by using the next 
\begin{lem}\label{lem:det}
Let $W$ be finite dimensional $\C$-vector space and $A\colon W\rightarrow W$ be a $\C$-linear automorphism.
Suppose that we have a decomposition $W=\bigoplus_{i=1}^{l}W_{i}$ such that $A$ maps $W_{i}$ to $W_{i+1}$ (we put $W_{l+1}:=W_{1}$). 
%with an integer $l\in\Z_{>0}$.
Then we have
\[
\det(1-A\mid W)
=
\det(1-A^{l}\mid W_{l}).
\]
\end{lem}

\begin{proof}
By fixing a basis of $W_{i}$ for each $i$, we let $A_{i}$ be the representation matrix of $A|_{W_{i}}\colon W_{i}\rightarrow W_{i+1}$.
Then $1-A$ is represented by the matrix
\[
\begin{pmatrix}
I_{m}&&&&-A_{l}\\
-A_{1}&I_{m}&&&\\
&-A_{2}&\ddots&&\\
&&\ddots&\ddots&\\
&&&-A_{l-1}&I_{m}
\end{pmatrix},
\]
where $m$ denotes the dimension of $W_{1}$ and $I_{m}$ denotes the identity matrix of size $m$.
By noting that
\[
\begin{pmatrix}
I_{m}&&&&-A_{l}\\
-A_{1}&I_{m}&&&\\
&-A_{2}&\ddots&&\\
&&\ddots&\ddots&\\
&&&-A_{l-1}&I_{m}
\end{pmatrix}
\begin{pmatrix}
I_{m}&&&&A_{l}\\
&I_{m}&&&\\
&&\ddots&&\\
&&&\ddots&\\
&&&&I_{m}
\end{pmatrix}
\]
\[
=
\begin{pmatrix}
I_{m}&&&&\\
-A_{1}&I_{m}&&&-A_{1}A_{l}\\
&-A_{2}&\ddots&&\\
&&\ddots&\ddots&\\
&&&-A_{l-1}&I_{m}
\end{pmatrix},
\]
we have 
\[
\begin{vmatrix}
I_{m}&&&&-A_{l}\\
-A_{1}&I_{m}&&&\\
&-A_{2}&\ddots&&\\
&&\ddots&\ddots&\\
&&&-A_{l-1}&I_{m}
\end{vmatrix}
=
\begin{vmatrix}
I_{m}&&&-A_{1}A_{l}\\
-A_{2}&\ddots&&\\
&\ddots&\ddots&\\
&&-A_{l-1}&I_{m}
\end{vmatrix}.
\]
Similarly, we have
\[
\begin{vmatrix}
I_{m}&&&-A_{1}A_{l}\\
-A_{2}&\ddots&&\\
&\ddots&\ddots&\\
&&-A_{l-1}&I_{m}
\end{vmatrix}
=
\begin{vmatrix}
I_{m}&&&-A_{2}A_{1}A_{l}\\
-A_{3}&\ddots&&\\
&\ddots&\ddots&\\
&&-A_{l-1}&I_{m}
\end{vmatrix}.
\]
Repeating this procedure, eventually we get
\[
|1-A|
=
|1-A_{l-1}\cdots A_{1}A_{l}|.
\]
Since $A_{l-1}\cdots A_{1}A_{l}$ is nothing but the restriction of $A^{l}$ to $W_{l}$, we get the conclusion.
\end{proof}

We take the quasi-split group $\bfG^{\ast}$ over $F$ equipped with an inner twist $\psi$ and use notations in Section \ref{subsubsec:non-q-spl}.
Put $W^{\ast}:=W(\G^{\ast},\bfA^{\ast})$.
Recall that the action of $W^{\ast}\times W_F$ on $X^*(\hat{\bfT}^{\ast})$ induces that of $W^{\ast}\times\langle\Frob\rangle$ on $X^*(\hat{\bfT}^{\ast I_F})$. 
Let $\mcP(r^{I_F})$ denote the $(W^{\ast}\times\langle\Frob\rangle)$-stable subset consisting of all weights in $V^{I_{F}}$ with respect to $\hat{\bfT}^{\ast I_{F}}$, i.e., 
\[
\mcP(r^{I_{F}}):=\{\mu\in X^{\ast}(\hat{\bfT}^{\ast I_{F}})\mid \text{$\mu$ appears in $V^{I_{F}}$}\}.
\]
For each $\mu\in\mcP(r^{I_F})$, we write $[\mu]$ for the image of $\mu$ under the canonical quotient map from $\mcP(r^{I_{F}})$ onto $\mcP(r^{I_F})/\langle\Frob\rangle$, and define $l_{\mu}\in\Z_{>0}$ to be the cardinality of the $\langle\Frob\rangle$-orbit $\{\Frob^i(\mu)\mid i\in\Z\}$. We also define $N(\mu):=\sum_{i=0}^{l_{\mu}-1}\Frob^i(\mu)\in\Lambda_{T^{\ast}}=X^{\ast}(\hat{\bfT}^{\ast I_{F}})^{\Frob}$. 
We remark that the maps $l$ and $N$ are $(W^{\ast}\times\langle\Frob\rangle)$-equivalent, where $W^{\ast}\times\langle\Frob\rangle$ acts on $\Z_{>0}$ trivially. 
Hence we can regard $N$ and $l$ as maps defined on $\mcP(r^{I_F})/\langle\Frob\rangle$. 
Put $\mcI$ to be the image of the $W^{\ast}$-equivalent map
\[
N\times l\colon \mcP(r^{I_F})/\langle\Frob\rangle\to\Lambda_{T^{\ast}}\times\Z_{>0};\quad [\mu]\mapsto (N([\mu]),l_{[\mu]}).
\]
Define %$\Lambda_{T^{\ast}}^+$ to be , and 
\[
\mcI^+:=\mcI\cap(\text{(the set of dominant elements in $\Lambda_{T^{\ast}}$)}\times\Z_{>0}).
\]
Then the canonical map $\mcI^+ \to \mcI/W^{\ast}$ is bijective.
%In other words, the set $\mcI/W^{\ast}$ is represented by the set $\mcP^{+}(r)$ of dominant weights.
Indeed, at least one element of each $W^{\ast}$-orbit in $\mcI$ belongs to $\mcI^+$ since the Weyl group acts on the set of Weyl chambers transitively. 
The uniqueness follows from, for example, \cite[Lemma 10.3.B]{MR499562}.

Put $\mcP_{\lambda,l}$ to be the inverse image of $(\lambda,l)\in\mcI$ under the map $\mcP(r^{I_F})\to\mcI$, i.e.,
\[
\mcP_{\lambda,l}
:=
\{\mu\in\mcP(r^{I_F}) \mid (N([\mu]),l_{[\mu]})=(\lambda,l)\}\subset\mcP(r^{I_F}).
\]
For each $\mu\in\mcP(r^{I_{F}})$, we put $V^{I_{F}}_{\mu}$ to be the $\mu$-eigenspace in $V^{I_{F}}$. 
%For any $\eta\in\mcP(r^{I_F})$, we define 
%\[
%V^{I_F}_{[\eta]} := \bigoplus_{\mu\in[\eta]}V^{I_F}_{\mu} = \bigoplus_{i=0}^{l_{[\eta]}-1}r(\Frob)^i(V^{I_F}_{\eta}). 
%\]
%We remark that $r(\Frob)^{l_{[\eta]}}$ gives an automorphism on $V^{I_F}_{\eta}$. 
%
For any $(\lambda,l)\in\mcI$, a complete set $\mcS$ of representatives of $\mcP_{\lambda,l}/\langle\Frob\rangle$, and $\eta\in\mcP(r^{I_F})$, we define 
\[
V^{I_F}_{\lambda,l}:=\bigoplus_{\mu\in\mcP_{\lambda,l}}V^{I_F}_{\mu},\quad 
V^{I_F}_{\mcS}:=\bigoplus_{\mu\in\mcS}V^{I_F}_{\mu},\quad 
V^{I_F}_{[\eta]}:=\bigoplus_{\mu\in[\eta]}V^{I_F}_{\mu}.
\]
Then we have 
\[
V^{I_F}_{\lambda,l}=\bigoplus_{i=0}^{l-1}r(\Frob)^i(V^{I_F}_{\mcS}),\ 
V^{I_F}_{[\eta]}=\bigoplus_{i=0}^{l_{[\eta]}-1}r(\Frob)^i(V^{I_F}_{\eta}),\ 
V^{I_F}=\bigoplus_{[\mu]\in\mcP(r^{I_F})/\lrFrob}V^{I_F}_{[\mu]}.
\] 
We remark that $r(\Frob)^{l}$ gives an automorphism on $V^{I_F}_{\mcS}$ and $r(\Frob)^{l_{[\eta]}}$ gives one on $V^{I_F}_{\eta}$. 
Since the multiset of eigenvalues of the automorphism $r(\Frob)^{l}$ (resp.\ $r(\Frob)^{l_{[\eta]}}$) do not depend on the choice of $\mcS$ (resp.\ $\eta$), we may write $C_{\lambda,l}$ (resp.\ $C_{[\eta]}$) for it. 
Note that the cardinality of $C_{\lambda,l}$ equals the dimension of $V^{I_F}_{\mcS}$.
%We see $m(\lambda,l)=\sum_{\mu\in\mcP_{\lambda,l}} m_{\mu}$, where $m_{\mu}\in\Z_{>0}$ denotes the multiplicity of $\mu$ in $V^{I_{F}}$.

Recall that  $A_{\mu}$ is a diagonalizable operator on $V_{\chi^{\ast}}^{J_{\mu}}$ given by $A_{\mu}(v_{w}^{J_{\mu},\vee})=q(w,\mu)^{-1}\cdot v_{w}^{J_{\mu},\vee}$ (see the paragraph before Corollary \ref{cor:OST4.3}).

\begin{thm}\label{thm:L}
Let $\pi$ be an $I$-spherical representation of $G$.
Let $\chi\in X^{\w}(M)$ be a weakly unramified character of $M$ such that $\pi$ is a subquotient of the normalized parabolic induction of $\chi$.
With the notations as in Corollary \ref{cor:OST4.3}, we have
\begin{align}\label{eq:main}
L_{\mathrm{ss}}(s,\pi,r)
=
\prod_{(\lambda,l)\in\mcI^+}
\prod_{c\in C_{\lambda,l}}
\det\bigl(1-q^{-ls}\cdot c\cdot A_{\lambda}\circ I_{\chi^{\ast}}(\mathbbm{1}_{\lambda})\,\big\vert\, V_{\chi^{\ast}}^{J_{\lambda}}\bigr)^{-1}.
\end{align}
\end{thm}

\begin{proof}
Recall that, by definition, we have
\[
L_{\mathrm{ss}}(s,\pi,r)
=
\det\bigl(1- q^{-s}\cdot r(s(\pi)) \,\big\vert\, V^{I_{F}}\bigr)^{-1}.
\]
Since $r(s(\pi))$ preserves $V^{I_{F}}_{[\mu]}$ for each $[\mu]\in\mcP(r^{I_{F}})/\lrFrob$, we have
\[
\det\bigl(1- q^{-s}\cdot r(s(\pi)) \,\big\vert\, V^{I_{F}}\bigr)
=
\prod_{[\mu]\in\mcP(r^{I_{F}})/\lrFrob}
\det\bigl(1- q^{-s}\cdot r(s(\pi)) \,\big\vert\, V^{I_{F}}_{[\mu]}\bigr).
\]
Note that, by fixing a representative $\mu$ of $[\mu]$, we have
\[
V^{I_{F}}_{[\mu]}
=V^{I_{F}}_{\mu}\oplus V^{I_{F}}_{\Frob(\mu)}\oplus\cdots\oplus V^{I_{F}}_{\Frob^{l_{[\mu]}-1}(\mu)}
\]
and $q^{-s}\cdot r(s(\pi))$ maps $V^{I_{F}}_{\Frob^{i}(\mu)}$ to $V^{I_{F}}_{\Frob^{i+1}(\mu)}$ for each $i$.
Hence, by Lemma \ref{lem:det}, we get
\[
\det\bigl(1- q^{-s}\cdot r(s(\pi)) \,\big\vert\, V^{I_{F}}_{[\mu]}\bigr)
=
\det\bigl(1- q^{-l_{[\mu]}s}\cdot r(s(\pi))^{l_{[\mu]}} \,\big\vert\, V^{I_{F}}_{\mu}\bigr).
\]

Recall that $s(\pi)=\tilde{t}_{\bfA^{\ast},\bfA}(\hat{\chi})\rtimes\Frob$.
Thus we have 
\[
r(s(\pi))^{l_{[\mu]}}
=
r(\mathcal{N}(\tilde{t}_{\bfA^{\ast},\bfA}(\hat{\chi})))\cdot r(\Frob)^{l_{[\mu]}},
\]
where we put $\mathcal{N}(\tilde{t}_{\bfA^{\ast},\bfA}(\hat{\chi})):=\prod_{i=0}^{l_{[\mu]}-1}\Frob^{i}(\tilde{t}_{\bfA^{\ast},\bfA}(\hat{\chi}))$.
As $\mathcal{N}(\tilde{t}_{\bfA^{\ast},\bfA}(\hat{\chi}))$ belongs to $\hat{\bfT}^{\ast I_{F}}$, $r(\mathcal{N}(\tilde{t}_{\bfA^{\ast},\bfA}(\hat{\chi})))$ acts on $V^{I_{F}}_{\mu}$ by a scalar multiplication $\mu(\mathcal{N}(\tilde{t}_{\bfA^{\ast},\bfA}(\hat{\chi})))$.
Note that we have
\[
\mu(\mathcal{N}(\tilde{t}_{\bfA^{\ast},\bfA}(\hat{\chi})))
=\prod_{i=0}^{l_{[\mu]}-1}\Frob^{i}(\mu)(\tilde{t}_{\bfA^{\ast},\bfA}(\hat{\chi}))
= N([\mu])(\tilde{t}_{\bfA^{\ast},\bfA}(\hat{\chi})).
\]
By the definition of $\chi^{\ast}$ (see Section \ref{subsubsec:non-q-spl}), we have $\lambda(\tilde{t}_{\bfA^{\ast},\bfA}(\hat{\chi}))= \chi^{\ast}\circ\kappa_{T^{\ast}}^{-1}(\lambda)$ for any $\lambda\in \Lambda_{T^{\ast}}=X^{\ast}(\hat{\bfT}^{\ast I_{F}})^{\Frob}$.
Hence we get
\[
N([\mu])(\tilde{t}_{\bfA^{\ast},\bfA}(\hat{\chi}))
=\chi^{\ast}\circ\kappa_{T^{\ast}}^{-1}(N([\mu])).
\]
From the above argument, we obtain 
\begin{align}\label{eq:L_ss}
L_{\mathrm{ss}}(s,\pi,r)=
\prod_{[\mu]\in\mcP(r^{I_F})/\langle\Frob\rangle}
\prod_{c\in C_{[\mu]}}
(1-q^{-l_{[\mu]}s}\cdot c\cdot \chi^{\ast}\circ\kappa_{T^{\ast}}^{-1}(N([\mu])))^{-1}.
\end{align}

We next rewrite the index set. Recall that we have a surjection $N\times l$ from $\mcP(r^{I_F})/\langle\Frob\rangle$ onto $\mcI$. By 
\[V^{I_F}_{\lambda,l}=\bigoplus_{\mu\in\mcP_{\lambda,l}}V^{I_F}_{\mu}=\bigoplus_{[\mu]\in\mcP_{\lambda,l}/\langle\Frob\rangle}V^{I_F}_{[\mu]},\]
we have an equality of multisets 
$C_{\lambda,l}=\bigsqcup_{[\mu]\in\mcP_{\lambda,l}/\langle\Frob\rangle}C_{[\mu]}$ for any $(\lambda,l)\in\mcI$. 
Therefore \eqref{eq:L_ss} equals 
\begin{align}
%&\prod_{[\mu]\in\mcP(r^{I_F})/\langle\Frob\rangle}
%\prod_{c\in C_{[\mu]}}
%(1-q^{-l_{[\mu]}s}\cdot c\cdot \chi^{\ast}\circ\kappa_{T^{\ast}}^{-1}(N([\mu])))^{-1}\notag\\
&\prod_{(\lambda,l)\in\mcI}
\prod_{[\mu]\in\mcP_{\lambda,l}/\langle\Frob\rangle}
\prod_{c\in C_{[\mu]}}
(1-q^{-ls}\cdot c\cdot \chi^{\ast}\circ\kappa_{T^{\ast}}^{-1}(\lambda))^{-1}\notag\\
&=\prod_{(\lambda,l)\in\mcI}
\prod_{c\in C_{\lambda,l}}
(1-q^{-ls}\cdot c\cdot \chi^{\ast}\circ\kappa_{T^{\ast}}^{-1}(\lambda))^{-1}.\label{eq:L_ss-1}
\end{align}

Note that the map $l\colon\mcP_{\lambda,l}/\langle\Frob\rangle\to\Z_{>0}$ is $W^{\ast}$-invariant, and that we have $C_{w(\lambda),l}=C_{\lambda,l}$ as multisets for any $w\in W^{\ast}$ since the action of $w$ induces a $\langle\Frob\rangle$-equivalent isomorphism $V^{I_F}_{(\lambda,l)}\cong V^{I_F}_{(w(\lambda),l)}$. 
Moreover, we have a bijection 
\[
W^{\ast}/W^{\ast}_{\lambda} \xrightarrow{1:1}W^{\ast}\cdot\lambda ;\quad w\mapsto w(\lambda).
\]
Therefore \eqref{eq:L_ss-1} equals 
\begin{align}
&\prod_{(\lambda,l)\in\mcI^+}
\prod_{w\in W^{\ast}/W^{\ast}_{\lambda}}
\prod_{c\in C_{w(\lambda),l}}
(1-q^{-ls}\cdot c\cdot \chi^{\ast}\circ\kappa_{T^{\ast}}^{-1}(w(\lambda)))^{-1}\notag\\
&=\prod_{(\lambda,l)\in\mcI^+}
\prod_{c\in C_{\lambda,l}}
\prod_{w\in W^{\ast}/W^{\ast}_{\lambda}}
(1-q^{-ls}\cdot c\cdot \chi^{\ast}\circ\kappa_{T^{\ast}}^{-1}(w(\lambda)))^{-1}.\label{eq:L_ss-2}
\end{align}
By applying Corollary \ref{cor:OST4.3} to $(\G^{\ast},\chi^{\ast},\lambda,c)$, the right-hand side of the equation \eqref{eq:L_ss-2} is written as
\begin{align*}
\prod_{(\lambda,l)\in\mcI^+}
\prod_{c\in C_{\lambda,l}}
\det\bigl(1-q^{-ls}\cdot c\cdot A_{\lambda}\circ I_{\chi^{\ast}}(\mathbbm{1}_{\lambda})\,\big\vert\, V_{\chi^{\ast}}^{J_{\lambda}}\bigr)^{-1}.
\end{align*}
Hence we get the assertion.
\end{proof}

%\begin{rem}
%When $\G$ is split over $\breve{F}$, the formula \eqref{eq:main} is simplified as 
%\[
%L_{\mathrm{ss}}(s,\pi,r)
%=
%\prod_{(\lambda,l)\in\mcI^+}
%\prod_{c\in C_{\lambda,l}}
%\det\bigl(1-q^{-(ls+\langle\rho_{\bfP},\lambda\rangle)}\cdot c\cdot I_{\chi^{\ast}}(\mathbbm{1}_{\lambda})\,\big\vert\, V_{\chi^{\ast}}^{J_{\lambda}}\bigr)^{-1}
%\]
%by Remark \ref{rem:q-spl} and the definition of the matrix $A_{\lambda}$. %(see Corollary \ref{cor:OST4.3}).
%\end{rem}

\subsection{The case of induced representations}
In this section, we consider the case where $\G$ is unramified and the representation $r$ of ${}^{L}\G$ is induced from the one of $\hat{\G}$.
In this case, the expression of Theorem \ref{thm:L} can be slightly simplified as we see in the following.

Assume that $\G$ is unramified, i.e., $\G$ is quasi-split and splits over an unramified extension of $F$.
As we have $\G=\G^{\ast}$, we use the notation as in Section \ref{subsubsec:q-spl}; for example, $\bfT$ denotes the centralizer of $\bfA$ in $\G$.
Since the action of $I_F$ on $\hat{\G}$ is trivial, we obtain the action of $\langle\Frob\rangle$ on $\hat{\G}$.
There exists $l_0\in\Z_{>0}$ such that the action of $\Frob^{l_0}$ on $\hat{\G}$ is trivial.
Let $(r_0,V_0)$ be a finite-dimensional algebraic representation of $\hat{\G}$. 
Via the quotient homomorphism ${}^{L}\G\to\hat{\G}\rtimes\lrFrob\to\hat{\G}\rtimes(\Z/l_0\Z)$, we regard the induced representation 
\begin{align}\label{eq:induced}
(r=\Ind^{\hat{\G}\rtimes(\Z/l_0\Z)}_{\hat{\G}}r_0,\quad V=\bigoplus_{i\in\Z/l_0\Z}V_0)
\end{align}
as a representation of ${}^{L}\G$, where $\Frob$ permutes each component of $V$. 

Write $W=W(\G,\bfA)$.
We define a $(W\times\lrFrob)$-equivalent map $N_0$ by
\[
N_0\colon X^{\ast}(\hat{\bfT})\to \Lambda_{T}=X^{\ast}(\hat{\bfT})^{\Frob};\quad \mu\mapsto \sum_{i=0}^{l_0-1}\Frob^i(\mu),
\]
and put $\mcI_0$ (resp.\ $\mcI_0^+$) to be $N_0(\mcP(r_0))$ (resp.\ the set of dominant elements in $N_0(\mcP(r_0))$). 
Then the canonical map $\mcI_0^+\to\mcI_0/W$ is bijective, as discussed for $\mcI^+$ before Theorem \ref{thm:L}. 

For $\mu\in\mcP(r_0)$, we write $V_{0,\mu}$ for the $\mu$-eigenspace in $V_0$. 
For $\mu\in\mcP(r_0)$, we define 
\[
V^{\mu}:=\bigoplus_{i=0}^{l_0-1}r(\Frob)^i(V_{0,\mu}).
\]
Then we see 
$V=\bigoplus_{\mu\in\mcP(r_0)}V^{\mu}.$
For $\lambda\in\mcI_0$, we set 
$m_{0,\lambda}:=\sum_{\mu\in N_0^{-1}(\lambda)}\dim V_{0,\mu}$. 
Since $w(V_{0,\mu})=V_{0,w(\mu)}$, we have $m_{0,\lambda}=m_{0,w(\lambda)}$ for any $w\in W$.

%\begin{rem}
%Since $N_0$ is a homomorphism of abelian groups, it induces a $W$-homomorphism from $X^{\ast}(\hat{\bfT}^{\Frob})=X^{\ast}(\hat{\bfT})_{\Frob}$ to $\Lambda_{T}$. 
%In particular, if two finite-dimensional representations $r_0, r_0'$ of $\hat{\G}$ are isomorphic as representations of ${\hat{\bfT}^{\Frob}}$, then $\mcI_0$ and $m_{0,\lambda}$ for $r_0, r_0'$ are the same. 
%\end{rem}

\begin{thm}\label{thm:main-the-split}
Assume that $\G$ is unramified and $r$ is given by \eqref{eq:induced}. 
Let $\pi$ be an $I$-spherical representation of $G$.
Let $\chi\in X^{\w}(T)$ be a weakly unramified character of $T$ such that $\pi$ is a subquotient of the normalized parabolic induction of $\chi$.
Then we have
\[
L_{\mathrm{ss}}(s,\pi,r)
=
\prod_{\lambda\in\mcI_0^+}
\det\bigl(1-q^{-(l_0s+\langle\rho_{\bfB},\lambda\rangle)}I_{\chi}(\mathbbm{1}_{\lambda})\,\big\vert\, V_{\chi}^{J_{\lambda}}\bigr)^{-m_{0,\lambda}}.
\]
\end{thm}
\begin{proof}
The proof is similar to that of Theorem \ref{thm:L}. 
Since $r(s(\pi))$ preserves $V^{\mu}$ for each $\mu\in\mcP(r_0)$, we have
\[
L_{\mathrm{ss}}(s,\pi,r)
=
\det\bigl(1- q^{-s}\cdot r(s(\pi)) \,\big\vert\, V\bigr)^{-1}
=
\prod_{\mu\in\mcP(r_0)}
\det\bigl(1- q^{-s}\cdot r(s(\pi)) \,\big\vert\, V^{\mu}\bigr)^{-1}.
\]
Since $q^{-s}\cdot r(s(\pi))$ maps $r(\Frob)^i(V_{0,\mu})$ to $r(\Frob)^{i+1}(V_{0,\mu})$ for each $i$, Lemma \ref{lem:det} shows
\[
\det\bigl(1- q^{-s}\cdot r(s(\pi)) \,\big\vert\, V^{\mu}\bigr)
=
\det\bigl(1- q^{-l_0s}\cdot r(s(\pi))^{l_0} \,\big\vert\, V_{0,\mu}\bigr).
\]

By $s(\pi)=\hat{\chi}\rtimes\Frob$, we have 
\[
r(s(\pi))^{l_0}
=
r(\mathcal{N}_0(\hat{\chi}))\cdot r(\Frob)^{l_0}
=r(\mathcal{N}_0(\hat{\chi})),
\]
where we put $\mathcal{N}_0(\hat{\chi}):=\prod_{i=0}^{l_0-1}\Frob^i(\hat{\chi})\in \hat{\bfT}^{\Frob}$.
For $\mu\in\mcP(r_0)$, we have
\[
\mu(\mathcal{N}_0(\hat{\chi}))
=\prod_{i=0}^{l_0-1}\Frob^{i}(\mu)(\hat{\chi})
= N_0(\mu)(\hat{\chi})
=\chi\circ\kappa_{T}^{-1}(N_0(\mu)).
\]
From the above argument, we obtain 
\begin{align*}
L_{\mathrm{ss}}(s,\pi,r)&=
\prod_{\mu\in\mcP(r_0)}
(1-q^{-l_0s}\cdot\chi\circ\kappa_{T}^{-1}(N_0(\mu)))^{-\dim V_{0,\mu}}\\
&=\prod_{\lambda\in\mcI_0}
(1-q^{-l_0s}\cdot\chi\circ\kappa_{T}^{-1}(\lambda))^{-m_{0,\lambda}}\\
&=\prod_{\lambda\in\mcI_0^+}
\prod_{w\in W/W_{\lambda}}
(1-q^{-l_0s}\cdot \chi\circ\kappa_{T}^{-1}(w(\lambda)))^{-m_{0,\lambda}}\\
&=\prod_{\lambda\in\mcI_0^+}
\det\bigl(1-q^{-(l_0s+\langle\rho_{\bfB},\lambda\rangle)} I_{\chi}(\mathbbm{1}_{\lambda})\,\big\vert\, V_{\chi}^{J_{\lambda}}\bigr)^{-m_{0,\lambda}},
\end{align*}
where we used $m_{0,\lambda}=m_{0,w(\lambda)}$ at the third equality, and Corollary \ref{cor:OST4.3} and Remark \ref{rem:q-spl} at the last equality. 
Hence we get the assertion.
\end{proof}
%\begin{rem}
%Theorems \ref{thm:L} and \ref{thm:main-the-split} may give two expressions of a single semisimple $L$-function (see Section \ref{subsec:ResGL}). 
%\end{rem}
\begin{rem}\label{rem:main-the-split}
When $\G$ is split and the finite-dimensional continuous representation $r$ of ${}^{L}\G=\hat{\G}\times W_{F}$ is trivial on $W_{F}$, we can apply Theorem \ref{thm:main-the-split} to $l_0=1$ and $r_0=r$. 
In this case, there is no difference between $\mcP(r_0)$ and $\mcI$. 
Hence the formula in Theorem \ref{thm:main-the-split} is simplified as follows:
\[
L_{\mathrm{ss}}(s,\pi,r)
=
\prod_{\mu\in\mcP^+(r)}
\det\bigl(1-q^{-(s+\langle\rho_{\bfB},\mu\rangle)}I_{\chi}(\mathbbm{1}_{\mu})\,\big\vert\, V_{\chi}^{J_{\mu}}\bigr)^{-m_{\mu}}.
\]
Here $m_{\mu}$ denotes the multiplicity of the weight $\mu$ in $r$.
\end{rem}
%\begin{rem}
%
%We next consider the case where $\G$ is split and the finite-dimensional continuous representation $r$ of ${}^{L}\G=\hat{\G}\times W_{F}$ is trivial on $W_{F}$.
%In this case, since the Galois action is trivial, we have $l_{[\mu]}=1$ for any $[\mu]$ (hence $N([\mu])=\mu$) and any eigenvalue $c_{[\mu],i}$ of $r(\Frob)^{l_{[\mu]}}$ is simply given by $1$.
%Since there is no difference between $\mcP(r^{I_{F}})$ and $\mcP(r^{I_{F}})/\lrFrob$, $n_{[\mu]}$ is always equal to $1$.
%Therefore the formula in Theorem \ref{thm:L} is simplified as follows:
%\[
%L_{\mathrm{ss}}(s,\pi,r)
%=
%\prod_{\mu\in\mcP^{+}(r)}
%\det\bigl(1-q^{-(s+\langle\rho_{\bfB},\mu\rangle)}\cdot I_{\chi}(\mathbbm{1}_{\mu})\,\big\vert\, V_{\chi}^{J_{\mu}}\bigr)^{-m_{\mu}}.
%\]
%
%\end{rem}

\subsection{The case of quasi-minuscule representations}\label{subsec:q-minuscule}
In this section, we focus on the case where $\G$ is split.
Let us investigate simpler cases where the right-hand side of the formula of Theorem \ref{thm:L} consists of essentially one nontrivial factor.

\begin{defn}\label{def:minuscule}
We say that an irreducible finite-dimensional representation of $\hat{\G}$ is \textit{minuscule} (resp.\ \textit{quasi-minuscule}) if 
the Weyl group $W$ acts transitively on the set of weights (resp.\ the set of weights not fixed by $W$). 
\end{defn}

\begin{rem}\label{rem:q-minuscule}
Let $r$ be an irreducible representation of $\hat{\G}$ with highest weight $\mu$.
Recall that the map $\mathcal{P}^{+}(r)\rightarrow\mathcal{P}(r)/W$ is bijective as discussed for $\mcI^+$ before Theorem \ref{thm:L}.
Hence, we have $\#\mcP^{+}(r)=1$ if $r$ is minuscule.
Moreover we can check that if $r$ is quasi-minuscule and not minuscule, then we have $\#\mcP^{+}(r)=2$ as follows:
Let us suppose that $\mu_{1}$ and $\mu_{2}$ are dominant weights of $r$ fixed by $W$.
Then it suffices to show that $\mu_{1}=\mu_{2}$, which is equivalent to  
\begin{align}\label{eq:q-minuscule}
\langle\alpha,\mu_{1}\rangle
=
\langle\alpha,\mu_{2}\rangle
\end{align}
 for any $\alpha\in X_{\ast}(\mcT)$.
Let $\hat{\G}_{\der}$ denote the derived group of $\hat{\G}$.
As we have $\mcT=\mcT_{\der}Z_{\hat{\G}}$, where $\mcT_{\der} := \mcT\cap\hat{\G}_{\der}$ and $Z_{\hat{\G}}$ is the center of $\hat{\G}$, it is enough to check the equality (\ref{eq:q-minuscule}) for every $\alpha\in X_{\ast}(\mcT_{\der})$ and $\alpha\in X_{\ast}(Z_{\hat{\G}})$.
We first check the former case.
For every coroot $\alpha\in X_{\ast}(\mcT_{\der})$, since $\mu_{1}$ is $W$-invariant, we have
\[
\langle \alpha,\mu_{1}\rangle
=
\langle \alpha,s_{\alpha}^{-1}\mu_{1}\rangle
=
\langle s_{\alpha}\alpha,\mu_{1}\rangle
=
-\langle \alpha,\mu_{1}\rangle,
\]
where $s_{\alpha}$ is the reflection with respect to $\alpha$.
Thus we have $\langle \alpha,\mu_{1}\rangle=0$.
As the space $X_{\ast}(\mcT_{\der})_{\R}$ is spanned by the set of coroots of $\mcT_{\der}$ in $\hat{\G}_{\der}$, the equality $\langle \alpha,\mu_{1}\rangle=0$ holds for any element $\alpha$ of $X_{\ast}(\mcT_{\der})$.
Similarly, we have $\langle \alpha,\mu_{2}\rangle=0$ for any $\alpha\in X_{\ast}(\mcT_{\der})$.
Second, as the representation $r$ is irreducible, it has a central character by Schur's lemma.
In other words, all weights of $r$ has the same value on the center $Z_{\hat{\G}}$.
Thus the equality (\ref{eq:q-minuscule}) holds for any $\alpha\in X_{\ast}(Z_{\hat{\G}})$.
\end{rem}

\begin{cor}\label{cor:L2}
Let  $r$ be a quasi-minuscule representation of the Langlands dual group $\hat{\G}$ with highest weight $\mu$. 
\begin{enumerate}
\item Assume that $r$ is minuscule. Then we have 
\begin{align*}
L_{\mathrm{ss}}(s,\pi,r) = \det\bigl(1- q^{-(s+\langle\rho_{\bfB}, \mu\rangle)} I_{\chi}(\mathbbm{1}_{\mu}) \,\big\vert\, V_{\chi}^{J_{\mu}}\bigr)^{-1}.  \end{align*}
\item Assume that $r$ is not minuscule. Then the set $\mcP^{+}(r)$ of dominant weights in $r$ consists of $\mu$ and a dominant weight $\mu'$ fixed by $W$, and we have 
\[
L_{\mathrm{ss}}(s,\pi,r) = \bigl(1-q^{-s}\chi\circ\kappa_{T}^{-1}(\mu')\bigr)^{-m_{\mu'}}
\det\bigl(1 - q^{-(s+\langle\rho_{\bfB}, \mu\rangle)} I_{\chi}(\mathbbm{1}_{\mu}) \,\big\vert\, V_{\chi}^{J_{\mu}}\bigr)^{-1}. 
\]
\end{enumerate}
\end{cor}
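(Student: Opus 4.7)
The plan is to obtain Corollary \ref{cor:L2} as a direct specialization of Theorem \ref{thm:L}, combined with the classification of quasi-minuscule representations recorded in Remark \ref{rem:q-minuscule}.

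I would begin by simply writing out the conclusion of Theorem \ref{thm:L},
\[
L(s,\pi,r) = \prod_{\cd \in \cP^+(r)} \det\bigl(1 - q^{-(s+\langle\rho,\cd\rangle)} I_\chi(\mathbbm{1}_\cd) \bigm| V_\chi^{K_\cd}\bigr)^{-m_\cd},
\]
and then separately analyze the two cases. For part (a), Remark \ref{rem:q-minuscule} gives $\cP^+(r) = \{\cg\}$, and since $\cg$ is the highest weight of the irreducible representation $r$ we have $m_\cg = 1$; the claim is then immediate.

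For part (b), the same remark identifies $\cP^+(r) = \{\cg, \cg'\}$, where $\cg'$ is a dominant weight fixed by the whole Weyl group $W$. The key observation is that the $W$-invariance of $\cg'$ forces $\langle\alpha,\cg'\rangle = 0$ for every root $\alpha$ (by the same calculation with reflections used in Remark \ref{rem:q-minuscule}); in particular $\langle\rho,\cg'\rangle = 0$ and $W_{\cg'} = W$. Feeding these two facts into Proposition \ref{prop:L} applied to $\cd = \cg'$, the product over $W/W_{\cg'}$ collapses to a single factor, so that
\[
\det\bigl(1 - q^{-s}I_\chi(\mathbbm{1}_{\cg'}) \bigm| V_\chi^{K_{\cg'}}\bigr) = 1 - q^{-s}\chi\bigl(\cg'(\varpi)\bigr).
\]
Raising this to the power $-m_{\cg'}$ and multiplying by the $\cg$-factor (with $m_\cg = 1$) then reproduces the formula of part (b).

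I do not anticipate a substantial obstacle: granted Theorem \ref{thm:L} and Remark \ref{rem:q-minuscule}, the corollary is pure bookkeeping. The only point requiring attention is the explicit simplification of the $\cg'$-factor, for which the identities $\langle\rho,\cg'\rangle = 0$ and $W_{\cg'} = W$ are exactly what is needed to reduce the determinant to a single linear factor matching the shape prescribed in the statement.
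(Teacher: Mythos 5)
Your proposal is correct and follows exactly the argument the paper gives: apply Theorem~\ref{thm:L}, invoke Remark~\ref{rem:q-minuscule} to determine $\cP^{+}(r)$, and in case (b) use the $W$-invariance of $\cg'$ to deduce $\langle\alpha,\cg'\rangle=0$ for all roots, hence $\langle\rho,\cg'\rangle=0$ and $W_{\cg'}=W$, so that Proposition~\ref{prop:L} collapses the $\cg'$-factor to $1-q^{-s}\chi(\cg'(\varpi))$. No gap, no deviation.
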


\begin{proof}
Assertion (1) is a direct consequence of Theorem~\ref{thm:L} and Remark~\ref{rem:q-minuscule} (recall that the multiplicity of the highest weight of $r$ is one). 

Let us show assertion (2). 
Again by Theorem \ref{thm:L} and Remark \ref{rem:q-minuscule}, we get
\[
L_{\mathrm{ss}}(s,\pi,r)
=
\det\bigl(1 - q^{-(s+\langle\rho_{\bfB},\mu'\rangle)} I_{\chi}(\mathbbm{1}_{\mu'}) \,\big\vert\,  V_{\chi}^{J_{\mu'}}\bigr)
^{-m_{\mu'}}
\det\bigl(1 - q^{-(s+\langle\rho_{\bfB}, \mu\rangle)} I_{\chi}(\mathbbm{1}_{\mu}) \,\big\vert\, V_{\chi}^{J_{\mu}}\bigr)^{-1}. 
\]
Since $\mu'$ is a $W$-invariant weight, by the same argument as in Remark \ref{rem:q-minuscule}, we have $\langle\alpha, \mu'\rangle=0$ for any $\alpha\in\Phi$. 
Hence $W_{\mu'} = W$ and $\langle\rho_{\bfB},\mu'\rangle$ vanishes. 
Then Corollary \ref{cor:OST4.3} shows that 
\[
\det\bigl(1 - q^{-(s+\langle\rho_{\bfB},\mu'\rangle)} I_{\chi}(\mathbbm{1}_{\mu'}) \,\big\vert\,  V_{\chi}^{J_{\mu'}}\bigr)
=1-q^{-s}\chi\circ\kappa_{T}^{-1}(\mu').
\]
\end{proof}

\begin{rem}\label{rem:q-min}
Assume that $\G$ is a split connected simple group with trivial center. 
In Table \ref{Table} in the end of this paper, we list all isomorphism classes of nontrivial quasi-minuscule representations of $\hat{\G}$ (cf. \cite[221 page, Fig.\ A.\ 1]{MR2388163}). 
Note that since we are assuming that $\G$ is simple, a nontrivial quasi-minuscule representation $r$ is minuscule exactly when $m_0=0$. 
We also remark that the Langlands dual group of the adjoint group  is simply-connected, and that there is a natural one-to-one correspondence between finite-dimensional representations of a connected simply-connected simple complex Lie group and finite-dimensional representations of its Lie algebra. 

For a split connected simple group $\G'$ whose center is not necessarily trivial, we remark that quasi-minuscule representations of $\widehat{\G'}$ are exactly those of the Langlands dual group $\widehat{\G'/\mathbf{Z}'}$ of $\G'/\mathbf{Z}'$ factoring $\widehat{\G'}$, where $\mathbf{Z}'$ denotes the center of $\G'$. 
%See \cite[pp.\ 132]{Bou05} for the classification of minuscule representations of $\hat{\G}$.
%As for quasi-minuscule representations which are not minuscule, the classification is given by the following well-known argument: 
%Since the center of $\widehat{\mathfrak{g}}$ is zero, every quasi-minuscule representations which is not minuscule have zero weight. Therefore all of its weights are contained in the root lattice. 
%Since all of its nonzero weights are in a single orbit of the Weyl group, it must contain the dominant short root. 
%Conversely, we can check that the irreducible representation whose highest weight is the dominant short root is quasi-minuscule and not minuscule. 

%Let us explain the notation in Table \ref{Table}. 
%The highest weight $\mu$ of a quasi-minuscule representation of $\hat{\G}$ is written as 
%\begin{align*}
%%\mu=\sum_{\omega\in I}\check{\omega}\\
%\langle\alpha, \mu\rangle=
%\begin{cases}
%1&\text{ if $\alpha\in I$,}\\
%0&\text{ otherwise,}
%\end{cases}
%\end{align*}
%where $I$ denotes the subset of $\Delta_{\mathcal{B}}$ consisting of the boxed simple roots in the Dynkin diagram on Table \ref{Table}. 
Let $\Delta_{\mathcal{B}}$ be the set of simple (with respect to the fixed Borel subgroup $\mcB$) roots of $\mcT$ in $\hat{\G}$.
Let $I$ denote the subset of $\Delta_{\mathcal{B}}$ consisting of the boxed simple roots in the Dynkin diagram on Table \ref{Table}. Then the highest weight $\mu$ of a quasi-minuscule representation $r$ of $\hat{\G}$ is characterized as the unique character satisfying
\begin{align*}
%\mu=\sum_{\omega\in I}\check{\omega}\\
\langle\alpha, \mu\rangle=
\begin{cases}
1&\text{ if $\alpha\in I$,}\\
0&\text{ otherwise.}
\end{cases}
\end{align*}
\end{rem}

\section{Examples in the unramified case}
In this section, we present some examples in the cases where $\G$ is $\GL_{n}, \Res_{E/F}\GL_n$ and $\GSp_{2n}$.

\subsection{The case of $\GL_{n}$}\label{subsec:GL}
Let $\G=\GL_n$ $(n\ge 2)$. We take the split maximal torus $\bfT$ consisting of diagonal matrices, and the Borel subgroup $\bfB$ consisting of upper-triangular matrices. 
We take $\Z$-bases for the character group $X^{\ast}(\bfT)$ and the cocharacter group $X_{\ast}(\bfT)$ to be $\{e_i\}_{i=1}^n$ and $\{e_i^{\vee}\}_{i=1}^n$, where $e_i$ and $e_i^{\vee}$ are given by 
\begin{align*}
e_i\bigl(\diag(t_1,\ldots, t_n)\bigr)=t_i
\quad\text{and}\quad
e_i^{\vee}(s)=\diag(\underbrace{1,\ldots,1}_{i-1},s,\underbrace{1,\ldots,1}_{n-i})
\end{align*}
for $t_1,\ldots, t_n, s\in \Gm$. 
Then we see 
\begin{align*}
\Phi&=\{\pm(e_i-e_j)\mid 1\le i< j\le n\}, \quad
\Delta_{\mathbf{B}}=\{e_1-e_2,\ldots,e_{n-1}-e_n\}, 
\\
\Phi^{\vee}&=\{\pm(e_i^{\vee}-e_j^{\vee})\mid 1\le i< j\le n\}, \quad 
\Delta_{\mathbf{B}}^{\vee}=\{e_1^{\vee}-e_2^{\vee},\ldots,e_{n-1}^{\vee}-e_n^{\vee}\}.
\end{align*} 
From these expressions, it follows that the Langlands dual group $\widehat{\GL_n}$ is $\GL_n(\C)$. 
Since the set of positive roots is given by $\{e_i-e_j\mid 1\le i<j\le n\}$, we have 
\[
\rho_{\bfB}=\sum_{i=1}^n\frac{n+1-2i}{2} e_i.
\] 

For $i\neq j$, we define homomorphisms $x_{e_i-e_j}\colon \Ga\to \bfU_{\alpha}\subset\G$ by $x_{e_i-e_j}(a):=I_n+aE_{i,j}$ for each $a\in\Ga$. 
Here $I_n$ denotes the $n \times n$ unit matrix and $E_{i,j}$ denotes the $n \times n$ matrix where the $(i,j)$-entry is 1 and the other entries are 0. 
Then $\{x_{\alpha}\colon \Ga\to \bfU_{\alpha}\}_{\alpha\in\Phi}$ forms a Chevalley basis of $\G$.
We take the special point $\bfo\in\mcB(\GL_{n},F)$ corresponding to this Chevalley basis.
In other words, as explained in Remark \ref{rem:Chevalley}, for $\alpha\in\Phi$, the filtration $\{U_{\alpha,r}\}_{r\in\R}$ of the root subgroup $U_{\alpha}=\bfU_{\alpha}(F)$ is given by $U_{\alpha,r}=x_{\alpha}(\{a\in F \mid \val_{F}(a)\geq r\})$.
The corresponding special parahoric subgroup $K$ is simply given by $\GL_{n}(\mcO)$.

\subsubsection{Exterior $L$-functions}
Consider the $l$-th exterior power $r = \wedge^l$ of the standard representation 
%$\C^n$ 
of $\hat{\G}=\GL_n(\C)$ for $1\le l\le n-1$. 
%Note that when $l=1$, it is nothing but the standard representation. 
It has the unique dominant weight $\mu=\sum_{i=1}^le_i^{\vee}$. 
Hence $\wedge^l$ is minuscule. 
We have 
%\begin{align*}
$\langle\rho_{\bfB},\mu\rangle=\sum_{i=1}^l(n+1-2i)/2=l(n-l)/2$. 
%\end{align*}
Therefore Corollary \ref{cor:L2} gives 
\begin{align*}
L(s,\pi,\wedge^l)=\det\bigl(1-q^{-(s+l(n-l)/2)}I_{\chi}(\mathbbm{1}_{\mu})\mid V_{\chi}^{J_{\mu}}\bigr)^{-1},
\end{align*}
where we have 
\begin{align*}
J_{\mu}=\left\{
\begin{pmatrix}
A&B\\
C&D
\end{pmatrix} \ \middle| \  A\in \GL_l(\mcO), B\in M_{l,n-l}(\mcO), C\in M_{n-l,l}(\mathfrak{p}), D\in \GL_{n-l}(\mcO)\right\}
\end{align*}
and the element $\ul{\mu}\in T/T_{1}$ is represented by $\diag(\underbrace{\varpi,\ldots, \varpi}_{l},\underbrace{1,\ldots,1}_{n-l})$.

Note that when $n=2$ and $l=1$, this formula recovers the classical formula for $L(s,\pi,\Std)$ explained in Section \ref{sec:Intro}.

\subsubsection{Adjoint $L$-function}
Consider the adjoint representation $r = {\rm Ad}$. 
%$\mathfrak{gl}_n(\C)$. 
Its highest weight is given by $\mu=e_1^{\vee}-e_n^{\vee}$. 
The other dominant weight is $\mu'=0$, whose multiplicity is $n$. 
We remark that the adjoint representation is the direct sum of a quasi-minuscule representation and the trivial representation. 
We have 
%\begin{align*}
$\langle\rho_{\bfB},\mu\rangle=(n-1)/2-(-n+1)/2=n-1$. 
%\end{align*}
Therefore Corollary \ref{cor:L2} gives 
\begin{align*}
L(s,\pi,{\rm Ad})=(1-q^{-s})^{-n}\det\bigl(1-q^{-(s+n-1)}I_{\chi}(\mathbbm{1}_{\mu})\mid V_{\chi}^{J_{\mu}}\bigr)^{-1},
\end{align*}
where we have 
\begin{align*}
J_{\mu}=
%\left(
%\begin{array}{ccccc}
%a&b_1&\cdots&b_{n-2}&c\\
%d_1&&&&f_1\\
%\vdots&&\mbox{\smash{\huge\textit{E}}}&&\vdots\\
%d_{n-2}&&&&f_{n-2}\\
%g&h_1&\cdots&h_{n-2}&i
%\end{array}
%\right)
\Set{
\begin{pmatrix}
a&b&c\\
{}^t\!d&E&{}^t\!f\\
g&h&i
\end{pmatrix} | \begin{array}{l}
a, i\in \mcO^{\times}, b,f\in M_{1,n-2}(\mcO), c\in\mcO,\\
d,h\in M_{1,n-2}(\mathfrak{p}), E\in \GL_{n-2}(\mcO), g\in\mathfrak{p}
\end{array}}
\end{align*}
and the element $\ul{\mu}\in T/T_{1}$ is represented by $\diag(\varpi, \underbrace{1,\ldots, 1}_{n-2},\varpi^{-1})$.

\subsubsection{Symmetric $L$-functions} 
Consider the $l$-th symmetric power $r = {\rm Sym}^{l}$ of the standard representation of $\hat{\G}=\GL_n(\C)$ for non-negative integer $l$.  
%Note that when $l=1$, it is nothing but the standard representation.
We can check the irreducibility of ${\rm Sym}^{l}$ by the Weyl dimension formula, for example. 
Let 
\[
T^+_l:=\left\{\mathbf{a}=(a_1,\ldots,a_n) \in \Z^{n} \ \middle| \ a_1\ge a_2\ge\cdots\ge a_n\ge 0, \sum_{i=1}^na_i=l\right\}
\]
and
\[
\mu_{\mathbf{a}}:=\sum_{i=1}^na_ie_n^{\vee}.
\] 
Given $\mathbf{a}\in T^+_l$, define $m\ge 1$ and $r_1(\mathbf{a}),\ldots,r_m(\mathbf{a})$ so that $r_1(\mathbf{a})+\cdots+r_m(\mathbf{a})=n$ and 
\[
a_{1}=a_{r_1(\mathbf{a})}>a_{r_1(\mathbf{a})+1}=a_{r_1(\mathbf{a})+r_2(\mathbf{a})}>\cdots >a_{
r_1(\mathbf{a})+\cdots+r_{m-1}(\mathbf{a})
+1}=a_{r_1(\mathbf{a})+\cdots+r_m(\mathbf{a})}. 
\]

The set $\mathcal{P}^{+}({\rm Sym}^{l})$ of dominant weights is given by $\{\mu_{\mathbf{a}}\mid \mathbf{a}\in T^+_l\}$, and their multiplicities are one. 
Therefore Theorem \ref{thm:L} gives 
\begin{align*}
L(s,\pi,{\rm Sym}^{l})
= \prod_{\mathbf{a} \in T^{+}_l}
\det\bigl(1-q^{-\left(s+
\sum_{i=1}^na_i(n+1-2i)/2\right)
%\langle\rho_{\bfB}, \mu\rangle)
}I_{\chi}(\mathbbm{1}_{\mu_{\mathbf{a}}}) \,\big\vert\, V_{\chi}^{J_{\mu_{\mathbf{a}}}}\bigr)^{-1},
\end{align*}
where we have 
\begin{align*}
J_{\mu_{\mathbf{a}}}=\Set{\begin{pmatrix}A_{11}&A_{12}&\cdots&A_{1m}\\A_{21}&A_{22}&\cdots&A_{2m}\\\vdots&\vdots&\ddots&\vdots\\A_{m1}&A_{m2}&\cdots&A_{mm}\end{pmatrix} | 
\begin{array}{l}
A_{ii}\in \GL_{r_i(\mathbf{a})}(\mcO)\text{ for $1\le i\le m$},\\
A_{ij}\in M_{r_i(\mathbf{a}),  r_j(\mathbf{a})}(\mcO)\\ \text{ and }A_{ji}\in M_{r_j(\mathbf{a}), r_i(\mathbf{a})}(\mathfrak{p})\\\text{ for $1\le i< j\le m$}
\end{array}}
\end{align*}
and the element $\ul{\mu_{\mathbf{a}}}\in T/T_{1}$ is represented by $\diag(\varpi^{a_1},\ldots,\varpi^{a_n})$.

%Their weighs are given by $\{\sum_{i=1}^na_ie_n^{\vee}\mid \text{$a_i\in\Z_{\ge 0}$ for $1\le i\le n$ and $\sum_{i=1}^na_i=l$}\}$, and their multiplicities are all one. 
%By considering the $\widehat{\mathfrak{g}}$-action on weight vectors, we see that $S^l\C^n$ is irreducible. 

\subsection{The case of $\Res_{E/F}\GL_n$}\label{subsec:ResGL}
Let $E$ be the unramified quadratic extension of $F$. 
Let us take $\G$ to be the Weil restriction $\Res_{E/F}\GL_{n,E}$ of the general linear group $\GL_{n,E}$ over $E$ with respect to $E/F$ (note that $\G$ is unramified).
We take $\bfA$ to be the maximal $F$-split torus of $\G$ whose $F$-valued points consists of diagonal matrices of $\GL_n(F)$, $\bfT$ to be the $F$-rational $E$-split torus of $\G$ consisting of diagonal matrices, and $\bfB$ to be the $F$-rational Borel subgroup of $\G$ consisting of upper triangular matrices. 
The Langlands dual group $\hat{\G}$ of $\G$ is given by $\GL_n(\C)\times\GL_n(\C)$ and the Weil group $W_{F}$ acts on $\hat{\G}$ by
\[
\sigma(g_{1},g_{2})
=
\begin{cases}
(g_{1},g_{2}) & \text{if $\sigma\in I_{F}$}, \\
(g_{2},g_{1}) & \text{if $\sigma=\Frob$}.
\end{cases}
\]
Hence ${}^L\G$ has
\[
{}^L\bar{\G}:=\hat{\G}\rtimes\Gal(E/F)=(\GL_n(\C)\times\GL_n(\C))\rtimes\Z/2\Z
\]
as its quotient.

We write $\bfT_n$ for the $E$-split maximal torus of $\GL_{n,E}$ in Section \ref{subsec:GL}, and use notations $e_i, e_i^{\vee}$ therein. 
Then we have $X^{\ast}(\bfT)=X^{\ast}(\bfT_n)\oplus X^{\ast}(\bfT_n)$. 
Since the set of positive roots is given by $\{(e_i-e_j,0),(0,e_i-e_j)\mid 1\le i<j\le n\}$, we have 
\[
\rho_{\bfB}=\left(\sum_{i=1}^n\frac{n+1-2i}{2} e_i,\sum_{i=1}^n\frac{n+1-2i}{2} e_i\right).
\] 

We take a special point $\bfo\in\mcB(\G,F)$ in the apartment attached to $\bfA$ so that the corresponding special parahoric subgroup $K$ is simply given by $\GL_{n}(\mcO_E)$, where $\mcO_E$ denotes the ring of integers of $E$.

\subsubsection{Asai $L$-function}
Let $\epsilon\in\{\pm 1\}$. 
Consider the Asai representation $\As^{\epsilon}$ of ${}^{L}\G$, which is characterized by the following properties:
\begin{itemize}
\item
The restriction of $\As^{\epsilon}$ to $\hat{\G}=\GL_n(\C)\times\GL_n(\C)$ is given by the tensor product $\C^n\boxtimes \C^n$ of the standard representations of $\GL_n(\C)$. 
\item
The representation $\As^{\epsilon}$ factors through ${}^L\bar{\G}$, and 
$\As^{\epsilon}(\Frob)(v\otimes w)=\epsilon\cdot w\otimes v$ for any $v,w\in\C^n$. 
\end{itemize}
We see
\[
\mcP(\As^{\epsilon})=\{(e_i^{\vee},e_j^{\vee})\mid 1\le i,j\le n\},\quad
\mcI^+=\{(\lambda_1,1), (\lambda_2,2)\},
\] 
where we put $\lambda_1:=(e_1^{\vee},e_1^{\vee})$, $\lambda_2:=(e_1^{\vee}+e_2^{\vee},e_1^{\vee}+e_2^{\vee})\in\Lambda_{T}$. 
We have $\langle\rho_{\bfB},\lambda_1\rangle=n-1$ and $\langle\rho_{\bfB},\lambda_2\rangle=(n-1)+(n-3)=2(n-2)$.
Moreover, 
$C_{(\lambda_1,1)}=\{\epsilon\}$, $C_{(\lambda_2,2)}=\{1\}$
as multisets. Then Theorem \ref{thm:L} gives 
\[
L(s,\pi,\As^{\epsilon})=\det(1-q^{-(s+n-1)}\epsilon\cdot I_{\chi}(\mathbbm{1}_{\lambda_1}) \,\big\vert\, V_{\chi}^{J_{\lambda_1}}\bigr)^{-1}
\det(1-q^{-2(s+n-2)}I_{\chi}(\mathbbm{1}_{\lambda_2}) \,\big\vert\, V_{\chi}^{J_{\lambda_2}}\bigr)^{-1}.
\]

As representations of ${}^L\bar{\G}$, we have an isomorphism 
\[
\As^+\oplus\As^-\cong\Ind^{{}^L\bar{\G}}_{\hat{\G}}(\C^n\boxtimes\C^n).
\] 
Let us apply Theorem \ref{thm:main-the-split} to $l_0=2$ and $r_0=\C^n\boxtimes\C^n$. 
We have $\mcP(r_0)=\mcP(\As^{\epsilon})$, $\mcI_0^+=\{2\lambda_1,\lambda_2\}$ and $m_{0,2\lambda_1}=1$, $m_{0,\lambda_2}=2$. 
Therefore, we obtain
\[
L(s,\pi,\As^+\oplus\As^-)=\det(1-q^{-2(s+n-1)}I_{\chi}(\mathbbm{1}_{2\lambda_1}) \,\big\vert\, V_{\chi}^{J_{2\lambda_1}}\bigr)^{-1}
\det(1-q^{-2(s+n-2)}I_{\chi}(\mathbbm{1}_{\lambda_2}) \,\big\vert\, V_{\chi}^{J_{\lambda_2}}\bigr)^{-2}.
\]
In the above expressions, we have 
\begin{align*}
J_{\lambda_1}&=J_{2\lambda_1}=\Set{
\begin{pmatrix}
a&b\\
c&D
\end{pmatrix} | \begin{array}{l}
a\in\mcO_E^{\times}, b\in M_{1,n-1}(\mcO_E),\\
c\in M_{n-1,1}(\mathfrak{p}_E), D\in \GL_{n-1}(\mcO_E)
\end{array}},\\
J_{\lambda_2}&=\Set{
\begin{pmatrix}
A&B\\
C&D
\end{pmatrix} | \begin{array}{l}
A\in \GL_2(\mcO_E), B\in M_{2,n-2}(\mcO_E),\\
C\in M_{n-2,2}(\mathfrak{p}_E), D\in \GL_{n-2}(\mcO_E)
\end{array}}
\end{align*}
($\mathfrak{p}_E$ denotes the maximal ideal of $\mcO_E$) and the elements $\ul{\lambda_{1}}$ and $\ul{\lambda_{2}}$ of $T/T_{1}$ are represented by 
\[
\diag(\varpi,\underbrace{1,\ldots,1}_{n-1})
\quad\text{and}\quad
\diag(\varpi,\varpi,\underbrace{1,\ldots,1}_{n-2}),
\]
respectively.

\subsection{The case of $\GSp_{2n}$}\label{subsec:GSp}
Let us take $\G$ to be 
\[\GSp_{2n}=
\Set{g\in\GL_{2n} | {}^{t}\!g\begin{pmatrix}&-J_n\\J_n&\end{pmatrix}g=x\begin{pmatrix}&-J_n\\J_n&\end{pmatrix}\text{ for some $x\in \Gm$}}\]
for $n\ge 1$. 
Here $J_{n}$ denotes the anti-diagonal $n\times n$ matrix whose all anti-diagonal entries are $1$.  
We take the split maximal torus $\bfT$ consisting of the diagonal matrices and the Borel subgroup $\bfB$ consisting of the upper-triangular matrices. 
We take $\Z$-bases for the character group $X^{\ast}(\bfT)$ and the cocharacter group $X_{\ast}(\bfT)$ to be $\{e_i\}_{i=0}^{n}$ and $\{e_i^{\vee}\}_{i=0}^n$, where $e_i$ and $e_i^{\vee}$ are given by 
\begin{align*}
&e_i(\diag(t_0t_1,\ldots,t_0t_n, t_n^{-1},\ldots,t_1^{-1}))=t_i,\\
&e_i^{\vee}(s)=
\begin{cases}
\diag(\underbrace{1,\ldots,1}_{i-1},s,\underbrace{1,\ldots,1}_{2n-2i},s^{-1},\underbrace{1,\ldots,1}_{i-1}) & \text{ if $1\le i\le n$,}
\\
\diag(\underbrace{s,\ldots,s}_{n},\underbrace{1,\dots,1}_{n}) & \text{ if $i=0$,}
\end{cases}
\end{align*}
for $t_0, \ldots, t_n, s\in \Gm$. 
Then we see 
\begin{align*}
\Phi &=\{\pm(e_i-e_j)\mid 1\le i< j\le n\}\cup \{\pm(e_i+e_j+e_0)\mid 1\le i\le j\le n\}, 
\\ 
\Delta_{\mathbf{B}}&=\{e_1-e_2,\ldots,e_{n-1}-e_n, 2e_n+e_0\}, 
\\
\Phi^{\vee} &=\{\pm e_i^{\vee}\pm e_j^{\vee} \mid 1\le i<j\le n\}\cup\{\pm e_i^{\vee}\mid 1\le i\le n\}, 
\\
\Delta_{\mathbf{B}}^{\vee}&=\{e_1^{\vee}-e_2^{\vee},\ldots,e_{n-1}^{\vee}-e_n^{\vee}, e_n^{\vee}\}.
\end{align*} 
This root datum is the dual root datum of $\GSpin_{2n+1}$ given in \cite[Proposition 2.4]{MR1913914}. 
Hence the Langlands dual group $\widehat{\GSp_{2n}}$ is $\GSpin_{2n+1}(\C)$. 

Here we fix an isomorphism between root data $\Psi(\GSp_{2n})^{\vee}$ and $\Psi(\GSpin_{2n+1})$ in the following way.
Let $\mathrm{sim}_{\GSpin_{2n+1}}$ be the similitude character of $\GSpin_{2n+1}(\C)$ defined by composing the covering map $\GSpin_{2n+1}(\C)\twoheadrightarrow\mathrm{GSO}_{2n+1}(\C)$ with that $\mathrm{sim}_{\GSO_{2n+1}}$ of $\GSO_{2n+1}(\C)$, which is given by
\[
\GSO_{2n+1}(\C)
=
\{g\in\GL_{2n+1}(\C)
\mid
{}^{t}gJ_{2n+1}g=\mathrm{sim}_{\GSO_{2n+1}}(g)J_{2n+1}
\}^{0}.
\]
Then we choose a unique isomorphism between root data $\Psi(\GSp_{2n})^{\vee}$ and $\Psi(\GSpin_{2n+1})$ such that $2e_0^{\vee}-\sum_{i=1}^ne_i^{\vee}$ corresponds to $\mathrm{sim}_{\GSpin_{2n+1}}$.

%\[
%\mathrm{sim}\colon \GSpin_{2n+1}(\C)\twoheadrightarrow\mathrm{GSO}_{2n+1}(\C)\subset\GL_{2n+1}(\C)\rightarrow\C^{\times}.
%\]

%The similitude character of $\GSpin_{2n+1}$ is orthogonal to $\Delta_{\mathbf{B}}$ and not divisible by an integer larger than two, but has is not determined from the above Hence it is $\pm(2e_0^{\vee}-\sum_{i=1}^ne_i^{\vee})$. 

Since the set of positive roots $\Phi^+$ is given by $\{e_i-e_j\mid 1\le i< j\le n\}\cup \{e_i+e_j+e_0\mid 1\le i\le j\le n\}$, we have $\rho_{\bfB}=\sum_{i=1}^n (n+1-i)e_i+n(n+1)/4 \cdot e_0$. 

Similarly to the case of $\GL_{n}$, we choose a special point $\bfo$ of the Bruhat--Tits building $\mcB(\GSp_{2n},F)$ associated with the following Chevalley basis $\{x_{\alpha}\colon\Ga\rightarrow\bfU_{\alpha}\}_{\alpha\in\Phi}$:
For $\alpha\in\Phi$, we define a homomorphism $x_{\alpha}\colon\Ga\rightarrow\bfU_{\alpha}\subset\G$ by
\begin{align*}
x_{e_i-e_j}(a)&=I_{2n}+a(E_{i,j}-E_{2n+1-j,2n+1-i})\text{\quad ($1\le i<j\le n$)},\\ 
x_{e_i+e_j+e_0}(a)&=I_{2n}+a(E_{i,2n+1-j}+E_{j,2n+1-i})\text{\quad ($1\le i<j\le n$)},\\
x_{2e_i+e_0}(a)&=I_{2n}+aE_{i,2n+1-i}\text{\quad ($1\le i\le n$)},\\
x_{-\alpha}(a)&={}^t\!x_{\alpha}(a)\text{\quad ($\alpha\in\Phi^+$)}.
\end{align*}
Then the filtration $\{U_{\alpha,r}\}_{r\in\R}$ of the root subgroup $U_{\alpha}=\bfU_{\alpha}(F)$ is given by $U_{\alpha,r}=x_{\alpha}(\{a\in F \mid \val_{F}(a)\geq r\})$.
The corresponding special parahoric subgroup $K$ is simply given by $\GSp_{2n}(\mcO)$.

\subsubsection{Spin $L$-function}
Consider the spin representation $r = {\rm Spin}$ of $\hat{\G}=\GSpin_{2n+1}$. 
By checking weights in the spin representation of the derived group $\Spin_{2n+1}$ (see \cite[Chapter V.9.27]{MR1920389}), 
we see that the spin representation of $\GSpin_{2n+1}$ is minuscule and that the highest weight $\mu\in X_{\ast}(\bfT)$ satisfies $\langle e_i-e_{i+1},\mu\rangle=0$ for $1\le i\le n-1$ and $\langle 2e_n+e_0,\mu\rangle=1$. 
%By \cite[Proposition 2.4]{MR1913914}, 
Since the restriction of the similitude character of $\GSpin_{2n+1}$ to its center is the twice of the character defined by the spin representation, we have $\langle e_0,\mu\rangle=\langle e_0, 2e_0^{\vee}-\sum_{i=1}^ne_i^{\vee}\rangle/2= 1$. 
Therefore we obtain $\mu=e_0^{\vee}$. 
We have $\langle\rho_{\bfB},\mu\rangle=n(n+1)/4$. 
Therefore Corollary \ref{cor:L2} gives 
\begin{align*}
L(s,\pi,{\rm Spin})=\det\bigl(1-q^{-(s+n(n+1)/4)}I_{\chi}(\mathbbm{1}_{\mu})\mid V_{\chi}^{J_{\mu}}\bigr)^{-1},
\end{align*}
where we have
\begin{align*}
J_{\mu}=\Set{\begin{pmatrix}A&B\\C&D\end{pmatrix}\in\GSp_{2n}(F) | A,D\in\GL_n(\mcO), B\in M_{n,n}(\mcO), C\in M_{n,n}(\mathfrak{p})}
\end{align*}
and the element $\ul{\mu}\in T/T_{1}$ is represented by $\diag(\underbrace{\varpi,\ldots,\varpi}_{n},\underbrace{1,\ldots,1}_{n})$.

Note that when $n=2$, this formula recovers Taylor's formula for $L(s,\pi,\Spin)$ explained in Section \ref{sec:Intro} (see \cite[Section~2.4]{MR2636500}).

\subsubsection{Standard $L$-function}
Composing the quotient $\GSpin_{2n+1}\to \SO_{2n+1}$ with the standard representation $\Std$ of $\SO_{2n+1}$, we obtain an irreducible $(2n+1)$-dimensional representation $r = \widetilde{\Std}$ of $\hat{\G}=\GSpin_{2n+1}$.
Its highest weight is given by $\mu=e_1^{\vee}$. 
The other dominant weight is $\mu'=0$, whose multiplicity is one. 
Hence the representation $\widetilde{\Std}$ is quasi-minuscule. 

We have $\langle\rho_{\bfB},\mu\rangle=n$. 
Therefore Corollary \ref{cor:L2} gives 
\begin{align*}
L(s,\pi, \widetilde{\Std})=(1-q^{-s})^{-1}\det\bigl(1-q^{-(s+n)}I_{\chi}(\mathbbm{1}_{\mu})\mid V_{\chi}^{J_{\mu}}\bigr)^{-1},
\end{align*}
where we have
\begin{align*}
J_{\mu}=\Set{\begin{pmatrix}a&b&c\\ {}^t\!d&E&{}^t\!f\\g&h&i\end{pmatrix}\in\GSp_{2n}(F) | \begin{array}{l}
a, i\in \mcO^{\times}, b,f\in M_{1,2n-2}(\mcO), c\in\mcO,\\
d,h\in M_{1,2n-2}(\mathfrak{p}), \\E\in \GL_{2n-2}(\mcO), g\in\mathfrak{p}
\end{array}}
\end{align*}
and the element $\ul{\mu}\in T/T_{1}$ is represented by $\diag(\varpi,\underbrace{1,\ldots,1}_{2n-2},\varpi^{-1})$.

%\bibliographystyle{my_amsalpha}
%\bibliographystyle{/Users/masaooi/Dropbox/my_amsalpha}
%\bibliography{References}
%\bibliography{/Users/masaooi/Dropbox/References}

\objectmargin={0.5pt}

\begin{table}[ht]
\begin{center}
\begin{threeparttable}
\caption{All nontrivial quasi-minuscule representations of simple Lie algebras}
\label{Table}
\begin{tabular}{ccccc}\toprule
$\mathfrak{g}$&$\widehat{\mathfrak{g}}$&$r$&$m_0$&$I$\\\hline\hline
$\mathfrak{sl}_n$&$\mathfrak{sl}_n(\C)$&$\wedge^l\C^n$\tnote{*a}&$0$&$\xygraph{\bullet ([]!{+(0,.3)} {1}) - [r] \cdots - [r] *+[F]{\bullet} ([]!{+(0,.3)} {l}) - [r] \cdots - [r] \bullet ([]!{+(0,.3)} {n-1})}$\\
$(n\ge 2)$&&$(1\le l\le n-1)$&&\\
&&$\adj$&$n-1$&$\xygraph{*+[F]{\bullet} - [r] \bullet - [r] \cdots - [r] \bullet - [r] *+[F]{\bullet}}$\\\hline
$\mathfrak{so}_{2n+1}$&$\mathfrak{sp}_{2n}(\C)$&$\C^{2n}$\tnote{*a}&$0$&$\xygraph{!~:{@{=}|@{<}} *+[F]{\bullet} - [r] \bullet - [r] \cdots - [r] \bullet : [r] \bullet}$\\
$(n\ge 2)$&&$(\wedge^2\C^{2n})_0$\tnote{*a}&$n-1$&$\xygraph{!~:{@{=}|@{<}} \bullet - [r] *+[F]{\bullet} - [r] \cdots - [r] \bullet : [r] \bullet}$\\\hline
$\mathfrak{sp}_{2n}$&$\mathfrak{so}_{2n+1}(\C)$&$\spin$&$0$&$\xygraph{!~:{@{=}|@{>}} \bullet - [r] \bullet - [r] \cdots - [r] \bullet : [r] *+[F]{\bullet}}$\\
$(n\ge 2)$&&$\C^{2n+1}$\tnote{*a}&$1$&$\xygraph{!~:{@{=}|@{>}} *+[F]{\bullet} - [r] \bullet - [r] \cdots - [r] \bullet : [r] \bullet}$\\\hline
$\mathfrak{so}_{2n}$&$\mathfrak{so}_{2n}(\C)$&$\C^{2n}$\tnote{*a}&$0$&$\xygraph{*+[F]{\bullet} - [r] \bullet - [r] \cdots - [r] \bullet ( - []!{+(1,.1)} \bullet, - []!{+(1,-.1)} \bullet)}$\\
$(n\ge 4)$&&$\half \spin\times 2$\tnote{*b}&$0$&$\xygraph{\bullet - [r] \bullet - [r] \cdots - [r] \bullet ( - []!{+(1,.1)} *+[F]{\bullet}, - []!{+(1,-.1)} \bullet)}$\\
&&&$0$&$\xygraph{\bullet - [r] \bullet - [r] \cdots - [r] \bullet ( - []!{+(1,.1)} \bullet, - []!{+(1,-.1)} *+[F]{\bullet})}$\\
&&$\adj$&$n$&$\xygraph{\bullet - [r] *+[F]{\bullet} - [r] \cdots - [r] \bullet ( - []!{+(1,.1)} \bullet, - []!{+(1,-.1)} \bullet)}$\\\hline
$\mathfrak{e}_6$&$\mathfrak{e}_6(\C)$&$\C^{27}\times 2$\tnote{*c}&$0$&$\xygraph{*+[F]{\bullet} - [r] \bullet - [r] \bullet ( - []!{+(0,-.5)} \bullet, - [r] \bullet - [r] \bullet)}$\\
&&&$0$&$\xygraph{\bullet - [r] \bullet - [r] \bullet ( - []!{+(0,-.5)} \bullet, - [r] \bullet - [r] *+[F]{\bullet})}$\\
&&$\adj$&$6$&$\xygraph{\bullet - [r] \bullet - [r] \bullet ( - []!{+(0,-.5)} *+[F]{\bullet}, - [r] \bullet - [r] \bullet)}$\\\hline
$\mathfrak{e}_7$&$\mathfrak{e}_7(\C)$&$\C^{56}$\tnote{*d}&$0$&$\xygraph{\bullet - []!{+(.8,0)} \bullet - []!{+(.8,0)} \bullet (- []!{+(0,-.5)} \bullet, - []!{+(.8,0)} \bullet - []!{+(.8,0)} \bullet - []!{+(.8,0)} *+[F]{\bullet})}$\\
&&$\adj$&$7$&$\xygraph{*+[F]{\bullet} - []!{+(.8,0)} \bullet - []!{+(.8,0)} \bullet (- []!{+(0,-.5)} \bullet, - []!{+(.8,0)} \bullet - []!{+(.8,0)} \bullet - []!{+(.8,0)} \bullet)}$\\\hline
$\mathfrak{e}_8$&$\mathfrak{e}_8(\C)$&$\adj$&$8$&$\xygraph{\bullet - []!{+(.7,0)} \bullet - []!{+(.7,0)} \bullet (- []!{+(0,-.5)} \bullet, - []!{+(.7,0)} \bullet - []!{+(.7,0)} \bullet - []!{+(.7,0)} \bullet - []!{+(.7,0)} *+[F]{\bullet})}$\\\hline
$\mathfrak{f}_4$&$\mathfrak{f}_4(\C)$&$\C^{26}$\tnote{*d}&$2$&$\xygraph{!~:{@{=}|@{>}} \bullet - [r] \bullet : [r] \bullet - [r] *+[F]{\bullet}}$\\\hline
$\mathfrak{g}_2$&$\mathfrak{g}_2(\C)$&$\C^7$\tnote{*d}&$1$&$\xygraph{!~:{@3{-}|@{<}} *+[F]{\bullet} : [r] \bullet}$
\\
\bottomrule
\end{tabular}
\small
\begin{tablenotes}
\item[*a]
$\C^n$ denotes the $n$-dimensional representation defining $\widehat{\mathfrak{g}}$, and $(\wedge^2\C^{2n})_0$ denotes the unique nontrivial irreducible component of the $\mathfrak{sp}_{2n}(\C)$-module $\wedge^2\C^{2n}$. 
\item[*b] 
The spin representation of $\mathfrak{so}_{2n}(\C)$ decomposes into the direct sum of two inequivalent irreducible submodules, which are called $\half \spin$. 
\item[*c]
$\C^{27}\times 2$ denotes the two $27$-dimensional irreducible $\mathfrak{e}_6(\C)$-modules which are inequivalent.
%$\C^{27}$ denotes the Albert algebra seen as a $\mathfrak{e}_7$-module and its complex conjugate. 
%$\C^{26}$ denotes the imaginary part of the Albert algebra. 
\item[*d]
$\C^{56}, \C^{26}, \C^7$ denote the irreducible $56, 26, 7$-dimensional $\widehat{\mathfrak{g}}$-modules, respectively. 
%the Freudenthal vector space. 
%\item[*f]
%$\C^7$ denotes the imaginary part of the Cayley algebra. 
\end{tablenotes}
\end{threeparttable}
\end{center}
\end{table}

\end{document}